\documentclass{amsart}

\usepackage{mathtools, 
            amssymb, 
            hyperref, 
            graphicx,
}
\usepackage[letterpaper, left=1in, right=1in]{geometry}  
\usepackage[shortlabels]{enumitem}

\usepackage{tikz}
\usetikzlibrary{arrows, positioning, math, quotes, calc, decorations.pathreplacing}

\makeatletter
\renewcommand{\sectionautorefname}{\S\@gobble}
\renewcommand{\subsectionautorefname}{\S\@gobble}
\makeatother

\newcommand{\FF}{\mathbb{F}}
\newcommand{\QQ}{\mathbb{Q}}
\newcommand{\RR}{\mathbb{R}}
\newcommand{\ZZ}{\mathbb{Z}}
\newcommand{\cB}{\mathcal{B}}
\newcommand{\cP}{\mathcal{P}}
\newcommand{\cT}{\mathcal{T}}
\newcommand{\bx}{\mathbf{x}}
\newcommand{\ones}{\mathbf{1}}

\DeclareMathOperator{\conv}{conv}
\DeclareMathOperator{\rank}{rank}

\newcommand{\paren}[1]{\left( #1 \right)}

\newcommand{\set}[1]{\left\{ #1 \right\}}

\newcommand{\floor}[1]{\left\lfloor #1 \right\rfloor}
\newcommand{\ceil}[1]{\left\lceil #1 \right\rceil}

\usepackage{amsthm}
\usepackage{thmtools}
\theoremstyle{plain}
\newtheorem{theorem}{Theorem}[section]
\newtheorem{lemma}[theorem]{Lemma}
\newtheorem{definition}[theorem]{Definition}
\newtheorem{proposition}[theorem]{Proposition}
\newtheorem{corollary}[theorem]{Corollary}
\newtheorem{question}{Question}

\theoremstyle{remark}

\declaretheoremstyle[
  spaceabove=\medskipamount, spacebelow=\medskipamount,
  headfont=\normalfont\scshape,
  notefont=\mdseries, notebraces={(}{)},
  bodyfont=\normalfont,
  postheadspace=1em,
  qed=\qedsymbol
]{examplestyle}
\declaretheorem[style=examplestyle, sibling=theorem]{example}

\usepackage[backend=biber, style=alphabetic, url=false, doi=false, isbn=false, maxnames=50]{biblatex}
\title{Uniformly Weighted Graphical Designs}
\author{Zawad Chowdhury}
\author{Rekha R. Thomas}
\address{Department of Mathematics, University of Washington, Seattle WA 98195}
\email{zawadx@uw.edu, rrthomas@uw.edu}

\begin{document}

\begin{abstract}
A graphical design is a subset of vertices of a graph, along with a weight for each chosen vertex, that can perfectly average chosen subspaces of functions on the graph. 
A design is uniformly weighted if all the weights are equal, and several well-known combinatorial objects such as orthogonal arrays, combinatorial block designs and $t$-wise permutations are uniformly weighted graphical designs.
While one might expect to see uniformly weighted designs in structured graphs, they do not always exist. In this paper we characterize the existence of uniformly weighted graphical designs, and use our result to provide several families of graphs that have, and do not have, such designs. Our results offer a polyhedral view of the structures that control the existence and cardinalities of these designs. 
In particular, we characterize all uniformly weighted designs of threshold graphs, and provide a geometric proof of the duality of linear codes and linear orthogonal arrays.
We also provide a novel construction for graphs whose Laplacian characteristic polynomials are almost irreducible, to produce families without uniformly weighted designs.

\end{abstract} 

\maketitle

\section{Introduction}

\subsection{Graphical Designs} 
Let $G = ([n],E)$ be a connected undirected graph with vertex set $[n] := \{1,\ldots, n\}$ and edge set $E$.
A subset of vertices $W \subset [n]$, with weights $a_w \in \RR$ for each $w \in W$, \textbf{averages} a function $f: [n] \to \RR$ on $G$ if the global average of $f$ on $G$ equals the weighted sum of $f$ at the vertices in $W$, i.e.,
\begin{align} \label{eq:global-average-equals-wted-sum}
    \frac{1}{n} \sum_{v \in [n]} f(v) = \sum_{w \in W} a_w f(w).
\end{align}
We will be concerned with subsets $W$ that can average entire subspaces of functions on $G$ with uniform weights ($a_w = a$ for all $w \in W$), providing combinatorial quadrature rules on graphs. 

Functions on $G$ can be organized via the Laplacian of $G$, the $n \times n$ matrix $L$ defined entrywise by
\[
    L_{ij} = \begin{cases}
        \deg(i) & \text{ if } i=j \ (\text{$\deg(i)$ is the degree of vertex $i$}), \\
        -1 & \text{ if } (i, j) \in E,\\
        0 & \text{ otherwise.}
    \end{cases}
\]
The Laplacian $L$ is positive semidefinite with real eigenvalues. 
Let the distinct eigenvalues of $L$ be $0 = \lambda_0 < \lambda_1 < \cdots < \lambda_k$, the multiplicity of $\lambda_i$ be $d_i$, and the $d_i$-dimensional eigenspace of $\lambda_i$ be $\Lambda_i$. 
The smallest eigenvalue is $\lambda_0 = 0$ with a one-dimensional eigenspace $\Lambda_0$ spanned by the all-ones vector $\ones := (1,1,\ldots,1) \in \RR^n$; we refer to this eigenvalue and eigenspace as ``trivial''. The second smallest eigenvalue $\lambda_1$ is positive because $G$ is connected. 
Note that our indexing is non-traditional; usually $\lambda_1 = 0$ is the first eigenvalue (and not treated as ``trivial''), and the second eigenvalue measuring connectivity is denoted by $\lambda_2$.

\begin{definition} \label{def:graphical-designs}
    A {\bf graphical design} of $G=([n],E)$, with \textbf{strength $t$}, is a subset of vertices $W \subset [n]$, with weights $a_w \in \mathbb{R}$ for each $w \in W$, such that \autoref{eq:global-average-equals-wted-sum} holds for all $f \in \Lambda_0 \oplus \Lambda_1 \oplus \cdots \oplus \Lambda_{t}$.
    \begin{enumerate}
        \item The design $W$ is {\bf positively weighted} if $a_w >  0$ for all $w \in W$. 
        \item The design $W$ is {\bf uniformly weighted} if 
        $a_w = a > 0$ for all $w \in W$. 
    \end{enumerate}
\end{definition}

Graphical designs were introduced by Steinerberger \cite{steinerberger2020designs} and developed further in a number of recent papers \cite{golubev2020extremal, babecki2021codes, babecki2022galeduality, babecki-shiroma, zhu2023bch, steinerbergerthomas2025randomwalks, chowdhury2025combstructures}. 

By \autoref{def:graphical-designs}, a graphical design of strength $t$ can average all elements in the first $t+1$ eigenspaces of $L$, where the eigenspaces have been ordered by increasing magnitude of their eigenvalues. This particular ordering (called \textbf{Laplacian order}) is natural, as it orders functions on $G$ by an intuitive notion of ``smoothness''. There are other choices for ordering eigenspaces as in \cite{golubev2020extremal, babecki2021codes, babecki2022galeduality, chowdhury2025combstructures}.
Two easy facts about designs are that no proper subset of vertices can average all eigenspaces of $G$, and that every subset of vertices will average $\Lambda_0$. Therefore, the strengths of interest lie in the range $1 \leq t \leq k-1$, and the nontrivial requirement on a design of strength $t$ is that it must average the first $t$ nontrivial eigenspaces of $L$.

\subsection{Uniformly Weighted Designs}
In principle, the weights $a_w$ on a design $W$ can be any real numbers. However, the most interesting situation is when they are positively weighted.
Babecki and Thomas \cite{babecki2022galeduality} prove a structure theorem for positively weighted designs, which shows that {\em every} graph $G$ admits positively weighted designs of {\em every} strength. On the other hand, uniformly weighted designs are rare and often fail to exist even in highly structured graphs. This prompts the question:
\begin{question}\label{qn:exist}
    (Existence) When does a graph $G$ have a uniformly weighted design of a given strength? 
\end{question}
\autoref{thm:unif-wted-two-valued}, which is the main result of this paper, answers \autoref{qn:exist}.
A graph $G$ has a uniformly weighted design of strength $t$ if and only if a certain point configuration (called an {\em eigenconfiguration} and obtained from eigenvectors that the design does not average) takes exactly two distinct values on some linear functional. This provides us with a geometric tool to certify the existence of uniformly weighted designs, using which we construct many examples of graphs with and without uniformly weighted designs. In the reverse direction, designs give us polyhedral information about eigenconfigurations and their convex hulls, called {\em eigenpolytopes}, and we elaborate on this at several points in the paper. 

\begin{figure}[!h]
    \begin{minipage}{0.48\textwidth}
        \centering
        \begin{tikzpicture}[scale=0.6]
            \tikzstyle{every node}=[circle, fill=white, draw=black, inner sep=2pt]

            \foreach \a in {1,2,...,9}{
                \node (o\a) at (\a*360/9: 3) [label=\a*360/9:$\a$]{};
            }
            \node (i1) at (0, 1) [label=above left:$10$]{};
            \node (i2) at (-0.8, -0.4) [label=below:$11$]{};
            \node (i3) at (0.8, -0.4) [label=right:$12$]{};

            \foreach \a[evaluate={\b=int(\a + 1)}] in {1,2,...,8}{
                \draw (o\a) -- (o\b);
            }
            \draw (o9) -- (o1);
            \foreach \a[evaluate={\c=int(\a + 2)}] in {1,4,7}{
                \draw (o\a) -- (o\c);
            }
            \draw (i1) -- (i2) -- (i3) -- (i1);
            \foreach \a[evaluate={\d=int(3*\a - 1)}] in {1,2,3}{
                \draw (i\a) -- (o\d);
            }
        \end{tikzpicture}
    \end{minipage}
    \begin{minipage}{0.48\textwidth}
        \centering
        \begin{tikzpicture}[scale=1.5]
            \tikzstyle{every node}=[circle, fill=black, draw=black, inner sep=0.5pt]

            \node (1) at (-1, 0, 0) [label=above left:$1$]{};
            \node (2) at (1, -1, 0) [label=right:$2$]{};
            \node (3) at (0, 1, 0) [label=above left:$3$]{};
            \node (4) at (0, -1, 0) [label=above right:$4$]{};
            \node (5) at (0, 1, -1) [label=above left:$5$]{};
            \node (6) at (0, 0, 1) [label=above left:$6$]{};
            \node (7) at (0, 0, -1) [label=above left:$7$]{};
            \node (8) at (-1, 0, 1) [label=above left:$8$]{};
            \node (9) at (1, 0, 0) [label=above:$9$]{};
            \node (10) at (-1, 1, 0) [label=above left:$10$]{};
            \node (11) at (0, -1, 1) [label=left:$11$]{};
            \node (12) at (1, 0, -1) [label=above right:$12$]{};

            \draw (2) -- (12); 
            \draw (2) -- (9); 
            \draw[dotted] (2) -- (4); 
            \draw (2) -- (11); 
            \draw (12) -- (9); 
            \draw[dotted] (12) -- (7); 
            \draw (12) -- (5); 
            \draw (9) -- (6); 
            \draw (9) -- (3); 
            \draw[dotted] (4) -- (11); 
            \draw[dotted] (4) -- (7); 
            \draw[dotted] (4) -- (1); 
            \draw (11) -- (6); 
            \draw (11) -- (8); 
            \draw[dotted] (7) -- (5); 
            \draw[dotted] (7) -- (1); 
            \draw (6) -- (3); 
            \draw (6) -- (8); 
            \draw (5) -- (3); 
            \draw (5) -- (10); 
            \draw (3) -- (10); 
            \draw[dotted] (1) -- (8); 
            \draw[dotted] (1) -- (10); 
            \draw (8) -- (10); 
            
        \end{tikzpicture}
    \end{minipage}
    \caption{For the truncated tetrahedron graph (left), the eigenpolytope for designs averaging all but the last eigenspace is the cuboctahedron (right).}
    \label{fig:eigenconfig-example}
\end{figure}
The entire vertex set $V$ is certainly a uniformly weighted design of every strength, but not useful for the intended goal of sampling functions with a small subset of the vertices. Therefore it is interesting to try to find uniformly weighted designs of small cardinality. For positively weighted designs, we have a bound coming from the dimension of the eigenconfiguration.

\begin{definition}
    A positively weighted design $W$ of strength $t$ is {\bf minimal} if there is no proper subset of $W$ that is a positively weighted design of strength $t$.
\end{definition}
\begin{theorem} \cite[Theorem~3.14]{babecki2022galeduality} \label{thm:dim-bound}
Every minimal positively weighted design of strength $t$ has cardinality at most $s_t := \sum_{i=0}^{t} d_i$. 
\end{theorem}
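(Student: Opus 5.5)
The plan is to turn the design condition into a conic feasibility problem and apply Carathéodory's theorem. First fix an orthonormal basis for each eigenspace, and stack the basis vectors of $\Lambda_0 \oplus \Lambda_1 \oplus \cdots \oplus \Lambda_t$ as the rows of an $s_t \times n$ matrix $U$. The first row of $U$ is taken proportional to $\ones$. For a vertex $v$, write $u_v \in \RR^{s_t}$ for the $v$-th column of $U$.

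\autoref{eq:global-average-equals-wted-sum} is linear in $f$, so it holds for the whole space exactly when it holds for each basis row. This makes $W$ with weights $a_w$ a design of strength $t$ if and only if
\[
\sum_{w \in W} a_w u_w = \frac{1}{n} \sum_{v \in [n]} u_v =: b .
\]
Every nontrivial basis row is orthogonal to $\ones$ and hence sums to zero. The trivial row is constant, so $b$ is $\frac{1}{\sqrt n}$ times the first standard basis vector. In any case $b$ is a fixed vector in $\RR^{s_t}$, and a positively weighted design of strength $t$ supported on $W$ is exactly a representation of $b$ as a strictly positive combination of $\{u_w : w \in W\}$.

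Now let $W$ be a minimal positively weighted design of strength $t$, and suppose $|W| > s_t$. Then the vectors $u_w$, $w \in W$, live in $\RR^{s_t}$ and are linearly dependent. Choose $\mu \neq 0$ with $\sum_{w\in W} \mu_w u_w = 0$; replacing $\mu$ by $-\mu$ if necessary, some $\mu_w > 0$. Set $\varepsilon = \min\{a_w/\mu_w : \mu_w > 0\}$, which is positive. The new weights $a_w - \varepsilon \mu_w$ are nonnegative, still represent $b$, and vanish at a minimizing index.

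Dropping the zero-weight vertices leaves a proper subset $W' \subsetneq W$ with strictly positive weights that still satisfies the design equation. We need $W'$ to be nonempty, and it is, because $b \ne 0$. So $W'$ is a positively weighted design of strength $t$, contradicting minimality, and therefore $|W| \le s_t$.

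There is no real obstacle here; this is the conic Carathéodory argument. The only points needing care are the two facts used above: that linearity reduces averaging of the whole subspace to the finitely many basis equations, and that $b \neq 0$, which keeps the reduced support from becoming empty.
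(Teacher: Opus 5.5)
Your proof is correct, but it takes a different route from the paper. The paper obtains the bound as an immediate consequence of the Gale-duality bijection (\autoref{thm:gale-duality-bijection}). There, a minimal positively weighted design $W$ is exactly the complement of the points of $\cB_t$ lying on a facet of the eigenpolytope $P_t$, whose dimension is $\beta = n - s_t$. A facet must contain at least $\beta$ affinely independent points of $\cB_t$, so $|W| \le n - \beta = s_t$. You instead work on the other side of the duality, with the columns $u_v \in \RR^{s_t}$ of the eigenvectors that \emph{are} averaged, and apply the conic Carath\'eodory reduction. This in fact shows that the vectors $u_w$, $w \in W$, are linearly independent for a minimal design. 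The two arguments are Gale-dual views of the same fact: linear independence of $\{u_w : w \in W\}$ corresponds to $\{b_i : i \notin W\}$ spanning $\RR^{\beta}$. Points on a facet do span $\RR^{\beta}$, because the $b_v$ sum to zero, so the facet hyperplane misses the origin. Your version is self-contained and elementary, needs none of the polytope machinery, and works verbatim for any collection of eigenspaces or any system of linear averaging constraints. The two points you flag, reducing to basis rows and using $b \neq 0$ to keep $W'$ nonempty, are exactly the ones that need checking. The paper's version is a one-liner only because it has already paid for \autoref{thm:gale-duality-bijection}. That investment also yields the complete list of minimal designs as facet complements, which the paper then uses for \autoref{cor:minimal-unif-designs} and its analysis of $2$-witness faces.
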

In other words, the bound $s_t$ on the size of a minimal positively weighted design of strength $t$ is the sum of dimensions of the $t+1$ eigenspaces that the design must average. We say that a design $W$ of strength $t$ meets the \textbf{dimension bound} if its cardinality, $|W|$, is at most $s_t$.

\begin{question}\label{qn:size}
    (Cardinality) If $G$ has uniformly weighted designs of strength $t$, is there one that meets the dimension bound?
\end{question}

While the dimension bound is guaranteed if the uniformly weighted design is minimal, it is possible that none of the minimal positively weighted designs of a particular strength are uniformly weighted. \autoref{ex:triangular-honeycomb} shows a graph in which there are uniformly weighted designs of a particular strength, but none that meets the dimension bound, highlighting the nontriviality of \autoref{qn:size}. In the next two subsections we highlight the historical interest in the cardinality question from independent research communities. 

\subsection{Connections to Combinatorial Structures}
There is a long history in combinatorics of deciding whether highly structured combinatorial objects can have small cardinality. We highlight three such objects. An {\em orthogonal array} of strength $t$ with $q$ levels is a subset of $\{0,1, \dots, q-1\}^n$ such that all elements of $\{0,1, \dots, q-1\}^t$ appear an equal number of times when the array is restricted to any collection of $t$ or less coordinates. 
A $t$-$(n,k,\lambda)$-{\em combinatorial block design} is a 
collection of $k$-element subsets of $[n]$ such that every $t$-element set in $[n]$ occurs in exactly $\lambda$ elements of the design. 
Finally, a $t${\em-wise permutation} is a subset of permutations on $[n]$ with a uniform action on any $t$-tuple of elements in $[n]$. 
The existence of combinatorial block designs for all parameters satisfying some easy-to-obtain divisibility constraints was a long-standing conjecture, resolved recently by Keevash \cite{keevash2024existencedesigns}.
Kuperberg, Lovett and Peled \cite{kuperberg2017combstructures} provided a unified framework for the existence of the three structures. Through probabilistic arguments, they show that for fixed $t$, the above structures exist with cardinality at most $(cn)^{ct}$ for some universal constant $c > 0$.

A different unified framework for these structures is that they are all uniformly weighted graphical designs for appropriately defined graphs \cite{chowdhury2025combstructures}. Orthogonal arrays are designs of the \emph{Hamming graph}, combinatorial block designs are designs of the {\em Johnson graph}, and $t$-wise permutations are designs of the transposition graph on the symmetric group $\mathfrak{S}_n$. 
Moreover, for fixed $t$, the sum of dimensions of eigenspaces averaged by these graphical designs is $O(n^t)$ in all three cases. This coincides with the bounds in \cite{kuperberg2017combstructures}, with a cleaner constant in the exponent than the one derived by probabilistic methods. 
Our main result, \autoref{thm:unif-wted-two-valued}, shows that there is a deterministic polyhedral reason for why the bounds in \cite{kuperberg2017combstructures} hold, continuing the unifying theme in \cite{chowdhury2025combstructures}.

\subsection{Connections to Spherical Designs}
Graphical designs are discrete analogs of {\em spherical designs}, which are well-known quadrature rules on the sphere \cite{sobolev1962cubature, goethals1977sphericaldesigns}. Let $S^n$ be the unit sphere in $\RR^{n+1}$ with normalized Lebesgue measure $\mu_n$. A set of points $x_1, \ldots, x_N \in S^n$ is called a {\em spherical} $t$-{\em design} if 
\[\int_{S^n} p(x) d \mu_n(x) = \frac{1}{N} \sum_{i=1}^N p(x_i)\]
for all homogeneous polynomials $p$ in $n+1$ variables of degree at most $t$. Spherical $t$-designs were shown to exist for all $n,t \in \ZZ_{\ge 0}$ by Seymour and Zaslavsky \cite{seymourzaslavsky1984spericaldesign}. 
It was a long-standing open problem, known as the {\em Korevaar-Meyers conjecture}, that the smallest spherical $t$-designs in $S^n$ have cardinality $O(t^n)$.
This conjecture was famously resolved in the positive by Bondarenko, Radchenko and Viazovska \cite{bondarenko2013spherical}. Note that $t^n$ is the order of the dimension of the space of homogeneous polynomials in $n+1$ variables of degree at most $t$ and hence the upper bound conjectured by Korevaar and Meyers is precisely the dimension bound. 
Also note that in this case the dimension bound was $O(t^n)$, while in the previous subsection it was $O(n^t)$. This is due to the convention of what is fixed to obtain the asymptotics. For combinatorial structures, we must have $1 \le t \le n$, so it makes sense to fix $t$ and find the bound as a function of $n$. Since there are spherical designs for all $t \in \ZZ_{\ge 0}$ given any $n$, the convention is to fix $n$ and obtain bounds as a function of $t$.

Although uniformly weighted graphical designs are the discrete analogs of spherical designs, not all graphs have uniformly weighted designs of all strengths (\autoref{ex:icosahedron}), and some graphs have no uniformly weighted design of any strength (\autoref{sec:algebraic-methods-no-designs}). 
This paper offers a complete geometric picture of when uniformly weighted graphical designs exist, and the polyhedral structures that control their size. 

\subsection{Organization of This Paper}
In \autoref{sec:weights-of-designs} we establish our main tool, \autoref{thm:unif-wted-two-valued}, which characterizes when a graph $G$ admits a uniformly weighted design of strength $t$. We illustrate the tool on a graph in which all minimal positively weighted designs of every strength are uniformly weighted (\autoref{ex:C6+K4bar}); all of them meet the dimension bound. On the other hand, \autoref{ex:triangular-honeycomb} shows a graph that admits uniformly weighted designs of strength one, but none of them are minimal and none of them meet the dimension bound. 
The problem of deciding whether a graph admits a uniformly weighted design of strength $t$ is a binary integer program. \autoref{prop:subset-sum} shows that this problem, for strength one designs in a weighted graph, is equivalent to the NP-complete multidimensional subset sum problem.

Point configurations that take exactly two values with respect to a linear functional are at the heart of the answer to \autoref{qn:exist}. In \autoref{sec:2-valued} we establish several geometric results about such configurations. 
These polyhedral results are employed in \autoref{ex:icosahedron} to show that the icosahedral graph admits uniformly weighted designs at some but not all strengths. They also produce nontrivial designs on the binary hypercube when the strength $ t \le \floor{2n/3}$ (\autoref{ex:cube-smallsize-design}). 

While the previous sections exhibited individual graphs with desired properties, in \autoref{sec:all-strengths} we provide three families of graphs that have uniformly weighted designs of all strengths. The first is a family of {\em complete tripartite graphs} (\autoref{ex:tripartite}) in which all minimal designs of all strengths are uniformly weighted. The second is the family of {\em threshold graphs}, for which a single vertex is a design of all strengths, but not all minimal designs are uniformly weighted. We then revisit hypercube graphs, where again, there are uniformly weighted designs of all strengths that meet the dimension bound, but not all minimal designs are uniformly weighted. Our methods for the hypercube graph provide a new geometric proof of the duality of linear codes and linear orthogonal arrays (\autoref{prop:geom-proof-duality}).

We conclude in \autoref{sec:algebraic-methods-no-designs} by constructing infinite families of graphs in which there are no uniformly weighted designs of any strength. This relies on a result relating the algebra of the Laplacian eigenvalues with the possible weights of graphical designs (\autoref{thm:rational-wts-thm}). Using this result, we establish a criterion that can produce infinite families of graphs without uniformly weighted designs (\autoref{thm:no-unif-design-inf-families}), as well as two concrete examples of such infinite families (\autoref{ex:monorail-graphs}, \autoref{ex:houseboat-graphs}).

\subsection*{Acknowledgements} We thank Stefan Steinerberger for many helpful conversations and initial computational explorations. Some examples in this paper were computed using Mathematica with coding help from Claude; others were computed in SageMath. We also used Claude for literature review and TikZ help. 
The writing and proofs in this paper did not use LLMs.

\section{Weights of designs} \label{sec:weights-of-designs}

In this section we characterize when uniformly weighted graphical designs exist, extending the results in \cite{babecki2022galeduality} on the existence of positively weighted designs. 
We begin by recalling some simple observations about \autoref{def:graphical-designs} from \cite{babecki2022galeduality}. In what follows we only consider strengths $1 \le t \le k-1$, which are the interesting cases for \autoref{qn:exist}. 
First, for a design to average all elements in an eigenspace of $L$, it suffices for it to average a basis of eigenvectors in that eigenspace. Secondly, if $\varphi \in \Lambda_i$ for some $i > 0$, then $\ones^\top \varphi = 0$ because the eigenspaces of $L$ are mutually orthogonal. This means that $W$, with weights $\{a_w\}$, averages $\varphi$ if and only if $0 = \sum_{w \in W} a_w \varphi(w)$. In particular, a uniformly weighted $W$ (with weight $a$) can average $\varphi$ if and only if $0 = a \sum_{w \in W} \varphi(w)$, which is equivalent to $0 = \sum_{w \in W} \varphi(w)$. Hence we may assume that $a=1$, and focus on the sets $W$.
Lastly, $W$ is a uniformly weighted design of strength $t$ if and only if $\overline{W} = [n] \setminus W$ is also a uniformly weighted design of strength $t$. Indeed, $W$ is a uniformly weighted design of strength $t$ if and only if $0 = \sum_{w \in W} \varphi(w)$ for all $\varphi$ in a basis of each of $\Lambda_1 \ldots, \Lambda_{t}$. Since any such $\varphi$ is orthogonal to $\ones$,
\[0 = \ones^\top \varphi = \sum_{w \in W} \varphi(w) + \sum_{w \in \overline{W}} \varphi(w).\]
Therefore, $\sum_{w \in W} \varphi(w) = 0$ if and only if 
$\sum_{w \in \overline{W}} \varphi(w) = 0$.

We now begin to answer \autoref{qn:exist}. 
Let $U \in \RR^{n \times n}$ be a matrix with $k+1$ blocks of rows such that the $i$th block has $d_i$ rows that form a basis of the eigenspace $\Lambda_i$ of $L$. We assume that the blocks are arranged in Laplacian order so that the top row is the vector $\ones$.
Suppose we wish to find $W \subset [n]$ with weights $\{a_w\}$ that averages the first $t$ nontrivial eigenspaces of $L$. Denote by $A$ the submatrix of $U$ consisting of blocks corresponding to $\lambda_1, \ldots, \lambda_{t}$ (the eigenvectors that $W$ must average), and by $B$ the submatrix of row blocks corresponding to $\lambda_{t+1}, \ldots, \lambda_k$, so that 
\begin{align} \label{eq:U}
U = \begin{bmatrix} 1 & \cdots & 1  \\  -  & A & -  \\ - & B & -   \end{bmatrix}.
\end{align}
Let $\alpha := \sum_{i=1}^{t} d_i$ be the number of rows in $A$ and $\beta := \sum_{i={t+1}}^k d_i = n - \alpha - 1$ be the number of rows in $B$. Since we are interested in $1 \leq t \leq k-1$, the matrix $A$ has at least $d_1 > 0$ rows and $B$ has at least $d_k > 0$ rows. Lastly, let the columns of $B$ be $b_1, \ldots, b_n \in \RR^{\beta}$. 

\begin{definition}
    Call the multiset $\cB_t := \{b_1, \ldots, b_n \} \subset \RR^{\beta}$ the $t$-th {\bf eigenconfiguration} of $G$, and the $\beta$-dimensional polytope $P_t := \conv(\cB_t)$, the convex hull of $\cB_t$, the $t$-th {\bf eigenpolytope} of $G$.
\end{definition}

The main theorem in \cite{babecki2022galeduality} proves that the minimal positively weighted designs of $G$ of strength $t$ are in bijection with the facets (codimension-one faces) of the eigenpolytope $P_t$.

\begin{theorem} \cite[]{babecki2022galeduality} \label{thm:gale-duality-bijection} A set of vertices $W \subset [n]$ is a minimal positively weighted design of $G$, of strength $t$, if and only if there is a facet of the eigenpolytope $P_t$ containing precisely $\{b_i \in \cB_t \,:\, i \in \overline{W}\}$.
\end{theorem}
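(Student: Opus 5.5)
We will prove the statement via Gale duality between the row space of $\begin{bmatrix}\ones \\ A\end{bmatrix}$ and that of $B$. Since the rows of $U$ form a basis of $\RR^n$ whose three blocks span mutually orthogonal subspaces, the row space of $B$ is exactly the orthogonal complement of the row space of $\begin{bmatrix}\ones \\ A\end{bmatrix}$. Consequently, a weight vector $a \in \RR^n$ (with $a_i = 0$ for $i \notin W$) satisfies the design equations $\ones^\top a = 1$ and $Aa = 0$ if and only if $a$ lies in the row space of $B$ shifted by a fixed solution. We take the fixed solution to be $\frac1n \ones$. Thus weights of designs of strength $t$ correspond to vectors
\[
a = \tfrac{1}{n}\ones + B^\top c, \qquad a_i = \tfrac1n + \langle c, b_i\rangle,
\]
for $c \in \RR^\beta$. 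Here the term $\frac1n\ones$ lies in the span of $\ones$, which is orthogonal to the rows of $A$, and $B^\top c$ is orthogonal to the rows of both $\ones$ and $A$. Conversely, every solution differs from $\frac1n\ones$ by something in the orthogonal complement of the rows of $\ones$ and $A$, namely the row space of $B$.

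\textbf{Positivity and support.} The support of $a$ is $W$ and positivity holds exactly when
\[
\langle c, b_i\rangle = -\tfrac1n \ \text{ for } i \in \overline{W}, \qquad \langle c,b_i\rangle > -\tfrac1n \ \text{ for } i \in W.
\]
So $\{x : \langle c,x\rangle = -1/n\}$ is a supporting hyperplane of $P_t$ (with $c \neq 0$ whenever $W \neq [n]$), and $\{b_i : i\in\overline W\}$ is exactly the set of configuration points on the face it cuts out. Conversely, given any face $F$ of $P_t$ not containing all points, the barycenter of $\cB_t$ is $\frac1n B\ones = 0$, because the rows of $B$ are orthogonal to $\ones$. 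Hence $0$ lies in the interior of the full-dimensional polytope $P_t$, and the supporting hyperplane of $F$ can be scaled to the form $\langle c, x\rangle = -1/n$. This yields a positively weighted design with $\overline W = \{i : b_i\in F\}$. Positively weighted designs therefore correspond to proper faces of $P_t$ together with a choice of supporting functional.

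\textbf{Minimality.} It remains to show that $W$ is minimal if and only if the face is a facet, that is, if and only if the set $\{b_i : i\in \overline W\}$ is maximal among face-point-sets. If $F$ is contained in a larger face $F'$ with more points, then $F'$ gives a design supported on a proper subset of $W$. Conversely, suppose $W' \subsetneq W$ is a positively weighted design. Then its face $F'$ contains all points of $\overline W$ plus more points. The question is whether $F'$ must contain $F$; it must, since a face containing the points $\{b_i : b_i\in F\}$ contains their convex hull, and every face contains all configuration points in the convex hull of its points, since $F$'s points are exactly the $b_i$ lying in $\conv$ of them (faces are intersections with hyperplanes). So minimal designs correspond to faces whose point sets are maximal, i.e.\ facets, because every proper face lies in a facet and a facet's point set cannot be strictly contained in that of another proper face.

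\textbf{Expected obstacle.} The main technical care is in this last step. One must argue that point-set containment matches face containment, and must handle repeated points $b_i$, since $\cB_t$ is a multiset. The dimension claims also need care: $P_t$ is $\beta$-dimensional because $B$ has full row rank, so the affine span of its points is all of $\RR^\beta$, using $\sum_i b_i = 0$. I would first verify that facets are exactly the faces with maximal point sets, using full-dimensionality, and then conclude the stated bijection.
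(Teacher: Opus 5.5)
Your proof is correct and takes essentially the same Gale-duality route as \cite{babecki2022galeduality}, which the paper cites rather than reproves. Your parametrization $a_i = \frac1n + \langle c, b_i\rangle$ of the design weights is the same kernel description the paper uses in its proof of \autoref{thm:unif-wted-two-valued}. Two cases are left implicit but are immediate and do not affect the argument: $W=[n]$ corresponds to the empty face and so is never minimal, and any competing design $W'\subsetneq W$ is nonempty, so its face $F'$ is proper.
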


\begin{example}
    The truncated tetrahedron graph (\autoref{fig:eigenconfig-example}, left) has five Laplacian eigenspaces. The last eigenspace $\Lambda_4$ is three-dimensional, and the columns of a basis of $\Lambda_4$ produce the eigenconfiguration $\cB_3$. The corresponding eigenpolytope $P_3$ is the cuboctahedron (\autoref{fig:eigenconfig-example}, right). It has fourteen facets, eight of which are triangles and six of which are quadrilaterals. Therefore by \autoref{thm:gale-duality-bijection}, there are fourteen minimal positively weighted designs of strength 3 in the graph, eight with $12-3 = 9$ vertices in their support and six with $12 - 4 = 8$ vertices. 
\end{example}

\autoref{thm:dim-bound} follows immediately from \autoref{thm:gale-duality-bijection}: every facet of $P_t$ contains at least $n-s_t$ points (the dimension of the polytope), which means that the set of points in $\cB_t$ that do not lie on a facet of $P_t$ has cardinality at most $s_t$.
The tools used to prove \autoref{thm:gale-duality-bijection} can also extract the weights for a minimal design of strength $t$. While this was not developed in \cite{babecki2022galeduality}, we do so now to answer \autoref{qn:exist}.

\begin{definition} \label{def:2-valued conf} 
 A non-coplanar point configuration $\cP = \{p_1, \ldots, p_r \} \subset \RR^d$ is \textbf{$2$-valued} if there is a linear functional $a^\top x$ that takes exactly two values on the points in $\cP$. 
\end{definition}

\begin{theorem} \label{thm:unif-wted-two-valued} 
 A graph $G$ has a uniformly weighted design of strength $t$ if and only if the $t$-th eigenconfiguration $\cB_t$ is $2$-valued.
\end{theorem}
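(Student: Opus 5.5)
The proof reduces the existence of a design to a linear-algebra statement about the indicator vector of $W$. By the observations at the start of this section, a uniformly weighted design of strength $t$ is a subset $\emptyset \neq W \subsetneq [n]$ with $\sum_{w\in W}\varphi(w)=0$ for every row $\varphi$ of $A$, with weight $a = 1/|W|$. The weight $1/|W|$ is also what makes $W$ average $\Lambda_0$. The proper-subset restriction comes from the fact that no proper subset averages all eigenspaces; more to the point, $W=[n]$ gives only a degenerate configuration below. In terms of the indicator vector $\ones_W\in\{0,1\}^n$, the condition is $A\,\ones_W = 0$, so the task is to describe $\ker A$.

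The key step is the identity
\[
\ker A = \operatorname{rowspace}(A)^\perp = \operatorname{span}\{\ones\} \oplus \operatorname{rowspace}(B).
\]
It holds for three reasons:
\begin{enumerate}
\item The rows of $U$ form a basis of $\RR^n$, since they are bases of the eigenspaces $\Lambda_0,\dots,\Lambda_k$ and these decompose $\RR^n$.
\item The three row blocks $\ones$, $A$, $B$ of \autoref{eq:U} span mutually orthogonal subspaces, because eigenspaces of the symmetric matrix $L$ for distinct eigenvalues are orthogonal.
\item By (1) and (2), the orthogonal complement of the row space of $A$ is exactly the span of the remaining rows.
\end{enumerate}
This holds even though the chosen bases within a single block need not be orthogonal. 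Hence $W$ is a design if and only if there exist $c\in\RR$ and $y\in\RR^\beta$ with
\[
\ones_W(i) = c + y^\top b_i \quad \text{for all } i\in[n].
\]

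For the forward direction, I would read this equation pointwise. The functional $y^\top x$ takes the value $1-c$ on $\{b_i : i\in W\}$ and $-c$ on $\{b_i: i\notin W\}$. Both sets are nonempty since $W$ is a proper nonempty subset, so $y^\top x$ takes exactly two values on $\cB_t$.

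For the converse, suppose $y^\top b_i\in\{u,v\}$ with $u\neq v$ and both values attained. Set $W=\{i : y^\top b_i = u\}$, a proper nonempty subset. Then $\ones_W(i) = (y^\top b_i - v)/(u-v)$, which exhibits $\ones_W$ in $\operatorname{span}\{\ones\}\oplus\operatorname{rowspace}(B)$. By the key step $A\ones_W=0$, so $W$ with weight $1/|W|$ is a uniformly weighted design of strength $t$.

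The main point needing care is the non-coplanarity hypothesis in \autoref{def:2-valued conf}. The rows of $B$ are linearly independent, so the $b_i$ span $\RR^\beta$ and $\cB_t$ is a genuine full-dimensional configuration. This is also what makes $P_t$ $\beta$-dimensional. Consequently the zero functional, which is the only one constant on $\cB_t$, cannot masquerade as a two-valued one, and "exactly two values" is exactly the condition matching proper nonempty $W$. I would also remark that $W$ and $\overline W$ correspond to the same functional with the two level sets swapped, consistent with the complementation observation above.
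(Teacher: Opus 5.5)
Your proof is correct and follows the paper's argument: the paper also notes that the columns of the matrix with rows $(1, b_i^\top)$ form a basis of $\ker A$, which is your decomposition $\ker A = \operatorname{span}\{\ones\}\oplus\operatorname{rowspace}(B)$, and then reads off $a_0 + a^\top b_i \in \{0,1\}$ from $e_W$. Your extra care about the weight $1/|W|$, about requiring $W$ to be proper and nonempty, and about non-coplanarity goes beyond the paper. On the last point, linear spanning of $\RR^\beta$ alone does not rule out an affine hyperplane; you also need $\sum_i b_i = B\ones = 0$, which puts the origin in the affine hull of $\cB_t$.
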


\begin{proof}
Consider the matrix $U$ from \autoref{eq:U}. 
A subset $W \subset [n]$ is a uniformly weighted design of $G$ of strength $t$ if and only if $A e_W = 0$, where $e_W = \sum_{i \in W} e_i$ and $e_i$ is the $i$-th standard basis vector in $\RR^n$. From the orthogonality of eigenspaces of $L$ and the definition of $U$, we have that 
\[A \begin{bmatrix} 1  & b_1^\top \\ 1 & b_2^\top \\ \vdots & \vdots \\ 1 & b_n^\top  \end{bmatrix} = 0, \] 
and that the columns of the matrix on the right form a basis of the kernel of $A$. Therefore, $e_W$ lies in the kernel of $A$ if and only if there is a unique real vector $\bar{a} := (a_0, a) \in \mathbb{R}^{1 + \beta}$ such that 
\[a_0 + a^\top b_i = 1\,\,\forall i \in W, \qquad
a_0 + a^\top b_i = 0 \,\,\forall i \not \in W. \]
Equivalently, the eigenconfiguration $\cB_t$ is contained in two parallel hyperplanes
and hence, $\cB_t$ is $2$-valued with respect to $a^\top x$. 
\end{proof}

As a corollary to Theorems~\ref{thm:gale-duality-bijection} and \ref{thm:unif-wted-two-valued}, we have the following statement. 

\begin{corollary} \label{cor:minimal-unif-designs}
    A set $W \subset [n]$ is a minimal uniformly weighted design of strength $t$ in $G$ if and only if there is a facet $F$ of the eigenpolytope $P_t$ containing all $b_i$ such that $i \not \in W$ and a parallel translate of the supporting hyperplane of $F$ containing all   $b_i$ such that $i \in W$.
\end{corollary}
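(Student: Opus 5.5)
The plan is to combine the two characterizations directly. A minimal uniformly weighted design is a set $W$ that is simultaneously uniformly weighted and minimal as a positively weighted design. This is the natural reading, since a uniformly weighted design is in particular positively weighted. \autoref{thm:gale-duality-bijection} handles minimality and the proof of \autoref{thm:unif-wted-two-valued} handles uniformity. The only real work is to check that the hyperplanes produced by the two results coincide, up to a parallel translate.

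For the forward direction, let $W$ be a minimal uniformly weighted design. By \autoref{thm:gale-duality-bijection} there is a facet $F$ of $P_t$ whose intersection with $\cB_t$ is exactly $\{b_i : i\notin W\}$. By the proof of \autoref{thm:unif-wted-two-valued} there is $(a_0,a)$ with
\[
a_0 + a^\top b_i = 0 \text{ for } i\notin W, \qquad a_0+a^\top b_i = 1 \text{ for } i \in W.
\]
The key step is to show that the hyperplane $H_0=\{x: a_0+a^\top x=0\}$ is the supporting hyperplane of $F$. All points of $\cB_t$ satisfy $a_0+a^\top x\in\{0,1\}$, hence $\ge 0$, so $H_0$ supports $P_t$ and $H_0\cap P_t$ is a face containing $\{b_i: i\notin W\}$. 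That face contains $F$, since $F$ is the convex hull of those points. The face is proper because $W\neq\emptyset$, and $F$ is a facet, so the two faces are equal and $H_0$ is the affine hull of $F$. The points $b_i$ with $i\in W$ then lie on the parallel translate $\{a_0+a^\top x=1\}$.

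For the converse, suppose $F$ is a facet containing exactly the $b_i$ with $i\notin W$. Let $\{c+a^\top x=0\}$ be its supporting hyperplane, and suppose $\{b_i:i\in W\}$ lies on a translate $\{c+a^\top x=\gamma\}$. Here $\gamma\neq0$, since otherwise the facet would contain all points and $P_t$ would not be full-dimensional. Rescaling by $\gamma$ gives the two-level functional, and the argument in the proof of \autoref{thm:unif-wted-two-valued} shows that $e_W\in\ker A$. So $W$ is a uniformly weighted design. \autoref{thm:gale-duality-bijection} shows that $W$ is minimal among positively weighted designs, and hence minimal among uniformly weighted ones.

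The main obstacle is interpreting ``facet containing all $b_i$ with $i\notin W$'' correctly. I will read it as containing precisely those points, as in \autoref{thm:gale-duality-bijection}. This is automatic here: any $b_i$ with $i\in W$ lies on the parallel translate with $\gamma\ne0$, so it cannot also lie on $F$. There is one more subtlety with the term ``minimal''. Minimality among positively weighted designs implies minimality among uniformly weighted ones, but not conversely. I will state that the corollary uses minimality in the positively weighted sense, consistent with the paper's definition.
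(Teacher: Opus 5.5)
Your proposal is correct and takes the same route as the paper. The paper states the corollary as an immediate combination of \autoref{thm:gale-duality-bijection} (for minimality) and the two-parallel-hyperplane characterization in the proof of \autoref{thm:unif-wted-two-valued} (for uniformity). Your additional checks fill in details the paper leaves implicit: the zero-level hyperplane of the two-valued functional is the affine hull of the facet, $\gamma\neq 0$ forces the facet to contain precisely the complement points, and ``minimal'' must be read in the positively weighted sense, which matches the paper's usage in \autoref{ex:triangular-honeycomb}.
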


The proof of \autoref{thm:unif-wted-two-valued} can be modified easily to show that $W$ is a design of strength $t$ with $\ell$ distinct positive weights $\delta_1 < \delta_2 < \ldots < \delta_\ell$ if and only if the $t$-th eigenconfiguration $\cB_t$ is contained in $\ell+1$ parallel affine hyperplanes of the form $a_0 + a^\top x = 0, a_0 + a^\top x = \delta_1, \ldots, a_0 + a^\top x = \delta_\ell$. 

The eigenconfiguration $\cB_t$ and hence $P_t = \conv(\cB_t)$ depend on our choice of eigenbases. However, since two different choices of $\cB_t$ differ by an invertible linear transformation $Q$, a different eigenconfiguration would be $\cB_t' = \{Qb_1, \ldots, Qb_n\}$. We can check that $\cB_t$ lies on two parallel hyperplanes with normal vector $a$ if and only if $\cB_t'$ lies on two parallel hyperplanes with normal vector $(Q^{-1})^\top a$. Therefore, \autoref{thm:unif-wted-two-valued} does not depend on the choice of eigenbases of $\Lambda_1, \ldots, \Lambda_k$.

\begin{example} \label{ex:C6+K4bar}
    Consider the regular graph $C_6 + \overline{K_4}$ of degree $6$, constructed by joining every vertex of the cycle $C_6$ to the complement of the complete graph $K_4$ (in other words, to $4$ isolated vertices). The Laplacian of this graph has  eigenvalues $0, 5^{(2)}, 6^{(3)}, 7^{(2)}, 8, 10$, where the exponent denotes the multiplicity of each eigenvalue. In this case $k=5$, and the graph could have nontrivial uniformly weighted designs of strengths $t=1,2,3,4$. The graph, alongside its $U$-matrix as in \autoref{eq:U}, is provided in \autoref{fig:C6K4bar}.
    We now check for uniformly weighted designs of various strengths in this graph using \autoref{thm:unif-wted-two-valued}.    
    \begin{figure}[!h]
        \begin{minipage}{0.40\textwidth}
            \centering
            \begin{tikzpicture}[scale=0.5]
                \tikzstyle{every node}=[circle, fill=white, draw=black, inner sep=2pt]
                \tikzstyle{labelnode}=[rectangle, draw=none, fill=none, inner sep=0pt]
                \foreach \a in {1,2,...,6}{
                    \node (o\a) at (\a*60+30: 4) [label=\a*60+30:$\a$]{};
                }
                \foreach \a in {1,2,...,4}{
                    \node (i\a) at (\a*90+45: 1.5) {};
                }
                \node[labelnode, above=0.8em of i1] {$7$};
                \node[labelnode, below=0.8em of i2] {$8$};
                \node[labelnode, below=0.8em of i3] {$9$};
                \node[labelnode, above=0.8em of i4] {$10$};
                
                \foreach \a[evaluate={\b=int(\a + 1)}] in {1,2,...,5}{
                    \draw (o\a) -- (o\b);
                }
                \draw (o6) -- (o1);
                \foreach \a in {1,2,...,6}{
                    \foreach \b in {1,2,...,4}{
                        \draw (o\a) -- (i\b);
                    }
                }
            \end{tikzpicture}
        \end{minipage}
        \begin{minipage}{0.56\textwidth}
            \centering
            $U = 
            \left( \begin{array}{rrrrrrrrrr}
            1& 1& 1& 1& 1& 1& 1& 1& 1& 1\\ 
            \hline
            -1& -1& 0& 1& 1& 0& 0& 0& 0& 0\\ 
            1&0& -1& -1& 0& 1& 0& 0& 0& 0\\ 
            \hline
            0& 0& 0& 0& 0& 0& -1& 1& 0& 0\\ 
            0& 0& 0& 0& 0& 0& -1& 0& 1& 0\\ 
            0& 0& 0& 0& 0& 0& -1& 0& 0& 1\\
            \hline
            -1& 1& 0& -1& 1& 0& 0& 0& 0& 0\\ -1& 0& 1& -1& 0& 1& 0& 0& 0& 0\\ \hline
            -1& 1& -1& 1& -1& 1& 0& 0& 0& 0\\ \hline
            -2& -2& -2& -2& -2& -2& 3& 3& 3& 3
            \end{array} \right)$
        \end{minipage}
        \caption{Left: the graph $C_6 + \overline{K_4}$. Right: its $U$-matrix.}
        \label{fig:C6K4bar}
    \end{figure}
    \begin{enumerate}
        \item For $t=4$, the eigenconfiguration 
        $\cB_4 = \{-2, -2, -2, -2, -2, -2, 3, 3, 3, 3 \}$ is $2$-valued with respect to the linear functional $x$. 
        Therefore, $W = \{7,8,9,10\}$ supported on $\overline{K_4}$ is a uniformly weighted minimal design of strength $4$ in $G$. The complement $\overline{W} = \{1,2,3,4,5,6\}$ supported on $C_6$ is also a uniformly weighted minimal design of strength $4$. Both meet the dimension bound $s_4 = 9$. One can check that $W$ and $\overline{W}$ both average the first $9$ rows of $U$ with uniform weight $1$. Since $P_4 = [-2,3]$ has only two facets, we have found all minimal positively weighted designs of strength $4$.
    
        \item For $t=3$, $\cB_3$ consists of the $10$ columns from the last two rows of $U$, and $P_3 = \conv(\cB_3)$ is the triangle with vertices $(-1,-2), (1,-2), (0,3)$ that are repeated 3, 3 and 4 times respectively. The configuration $\cB_3$ is $2$-valued with respect to the supporting hyperplane of every side of the triangle. Therefore all minimal designs are uniformly weighted and meet the dimension bound $s_3 = 8$:
        $$ \{7,8,9,10\}, \{1,3,5\}, \{2,4,6\}.$$
    
        \item For $t=2$, the eigenpolytope $P_2$ is $4$-dimensional with $7$ vertices and $6$ facets. It is a pyramid over a triangular prism with the apex of the pyramid at $(0,0,0,3)$, repeated $4$ times. The eigenconfiguration $\cB_2$ is $2$-valued with respect to $a^\top x$ for every facet normal $a$ of $P_2$. The minimal designs are:  
        $$\{1,4\}, \{2,5\}, \{3,6\}, \{1,3,5\}, \{2,4,6\}, \{7,8,9,10\}.$$
        They are all uniformly weighted, meet the dimension bound of $s_2=6$, and are illustrated in \autoref{fig:C6K4bar-designs}.
    
        \item The last strength of interest is $t=1$ for which the eigenpolytope is $7$-dimensional and is the join of a triangular prism formed by the first six points and a tetrahedron formed by the last four points. Therefore, it has $6+4=10$ vertices and $5+4=9$ facets.
        All minimal designs are again uniformly weighted and meet the dimension bound of $s_1 = 3$:
        $$\{7\}, \{8\}, \{9\}, \{10\}, \{1,4\}, \{2,5\}, \{3,6\}, \{1,3,5\}, \{2,4,6\}.$$
        The singleton designs come from the facets joining the prism to each of the $4$ facets of the tetrahedron. The two-element designs come from joining the tetrahedron to the three quadrilateral facets of the prism. And finally, the three-element designs come from joining the tetrahedron to the two triangular facets of the prism.
    \end{enumerate}
    \begin{figure}[!h]
        \begin{minipage}{0.48\textwidth}
            \begin{tikzpicture}[yscale=1.3]
                \tikzstyle{repeatnode}=[circle, fill=white, draw=black, inner sep=1.5pt]
                \node (1to6) at (2, 3) {$\{1,2,3,4,5,6\}$};
                \node (7to10) at (5.8, 3) {$\{7,8,9,10\}$};
                \node (135) at (0, 2) {$\{1,3,5\}$};
                \node (14) at (1, 1) {$\{1,4\}$};
                \node (25) at (2, 1) {$\{2,5\}$};
                \node (36) at (3, 1) {$\{3,6\}$};
                \node (246) at (4, 2) {$\{2,4,6\}$};
                \node[repeatnode] (r2) at ((5.8, 2) {};
                \node[repeatnode] (r1) at ((5.8, 1) {};
                \node[repeatnode] (l1a) at (0, 1) {};
                \node[repeatnode] (l1b) at (4, 1) {};
                \node (7) at (4.8, 0) {$\{7\}$};
                \node (8) at (5.4, 0) {$\{8\}$};
                \node (9) at (6, 0) {$\{9\}$};
                \node (10) at (6.7, 0) {$\{10\}$};
                \node[repeatnode] (l0a) at (0, 0) {};
                \node[repeatnode] (l0c) at (1, 0) {};
                \node[repeatnode] (l0d) at (2, 0) {};
                \node[repeatnode] (l0e) at (3, 0) {};
                \node[repeatnode] (l0b) at (4, 0) {};
    
                \draw (1to6) -- (135) -- (l1a) -- (l0a);
                \draw (1to6) -- (246) -- (l1b) -- (l0b);
                \draw (1to6) -- (14) -- (l0c);
                \draw (1to6) -- (25) -- (l0d);
                \draw (1to6) -- (36) -- (l0e);
                \draw (7to10) -- (r2) -- (r1);
                \draw (8) -- (r1) -- (7);
                \draw (9) -- (r1) -- (10);
            \end{tikzpicture}
        \end{minipage}
        \hspace{0.04\textwidth}
        \begin{minipage}{0.44\textwidth}
        \centering
        \begin{minipage}{0.32\textwidth}
            \centering
            \begin{tikzpicture}[scale=0.25]
                \tikzstyle{every node}=[circle, fill=white, draw=black, inner sep=1.5pt]
                \tikzstyle{labelnode}=[rectangle, draw=none, fill=none, inner sep=0pt]
                \foreach \a in {1,2,...,6}{
                    \node (o\a) at (\a*60+30: 4) {};
                }
                \foreach \a in {1,2,...,4}{
                    \node (i\a) at (\a*90+45: 1.5) {};
                }
    
                \node[fill=black] at (o1) {};
                \node[fill=black] at (o4) {};
                
                \foreach \a[evaluate={\b=int(\a + 1)}] in {1,2,...,5}{
                    \draw (o\a) -- (o\b);
                }
                \draw (o6) -- (o1);
                \foreach \a in {1,2,...,6}{
                    \foreach \b in {1,2,...,4}{
                        \draw (o\a) -- (i\b);
                    }
                }
            \end{tikzpicture}
        \end{minipage}
        \begin{minipage}{0.32\textwidth}
            \centering
            \begin{tikzpicture}[scale=0.25]
                \tikzstyle{every node}=[circle, fill=white, draw=black, inner sep=1.5pt]
                \tikzstyle{labelnode}=[rectangle, draw=none, fill=none, inner sep=0pt]
                \foreach \a in {1,2,...,6}{
                    \node (o\a) at (\a*60+30: 4) {};
                }
                \foreach \a in {1,2,...,4}{
                    \node (i\a) at (\a*90+45: 1.5) {};
                }
    
                \node[fill=black] at (o2) {};
                \node[fill=black] at (o5) {};
    
                \foreach \a[evaluate={\b=int(\a + 1)}] in {1,2,...,5}{
                    \draw (o\a) -- (o\b);
                }
                \draw (o6) -- (o1);
                \foreach \a in {1,2,...,6}{
                    \foreach \b in {1,2,...,4}{
                        \draw (o\a) -- (i\b);
                    }
                }
            \end{tikzpicture}
        \end{minipage}
        \begin{minipage}{0.32\textwidth}
            \centering
            \begin{tikzpicture}[scale=0.25]
                \tikzstyle{every node}=[circle, fill=white, draw=black, inner sep=1.5pt]
                \tikzstyle{labelnode}=[rectangle, draw=none, fill=none, inner sep=0pt]
                \foreach \a in {1,2,...,6}{
                    \node (o\a) at (\a*60+30: 4) {};
                }
                \foreach \a in {1,2,...,4}{
                    \node (i\a) at (\a*90+45: 1.5) {};
                }
    
                \node[fill=black] at (o3) {};
                \node[fill=black] at (o6) {};

                \foreach \a[evaluate={\b=int(\a + 1)}] in {1,2,...,5}{
                    \draw (o\a) -- (o\b);
                }
                \draw (o6) -- (o1);
                \foreach \a in {1,2,...,6}{
                    \foreach \b in {1,2,...,4}{
                        \draw (o\a) -- (i\b);
                    }
                }
            \end{tikzpicture}
        \end{minipage}
        
        \vspace{1em}
        \begin{minipage}{0.32\textwidth}
            \centering
            \begin{tikzpicture}[scale=0.25]
                \tikzstyle{every node}=[circle, fill=white, draw=black, inner sep=1.5pt]
                \tikzstyle{labelnode}=[rectangle, draw=none, fill=none, inner sep=0pt]
                \foreach \a in {1,2,...,6}{
                    \node (o\a) at (\a*60+30: 4) {};
                }
                \foreach \a in {1,2,...,4}{
                    \node (i\a) at (\a*90+45: 1.5) {};
                }
    
                \node[fill=black] at (o1) {};
                \node[fill=black] at (o3) {};
                \node[fill=black] at (o5) {};

                \foreach \a[evaluate={\b=int(\a + 1)}] in {1,2,...,5}{
                    \draw (o\a) -- (o\b);
                }
                \draw (o6) -- (o1);
                \foreach \a in {1,2,...,6}{
                    \foreach \b in {1,2,...,4}{
                        \draw (o\a) -- (i\b);
                    }
                }
            \end{tikzpicture}
        \end{minipage}
        \begin{minipage}{0.32\textwidth}
            \centering
            \begin{tikzpicture}[scale=0.25]
                \tikzstyle{every node}=[circle, fill=white, draw=black, inner sep=1.5pt]
                \tikzstyle{labelnode}=[rectangle, draw=none, fill=none, inner sep=0pt]
                \foreach \a in {1,2,...,6}{
                    \node (o\a) at (\a*60+30: 4) {};
                }
                \foreach \a in {1,2,...,4}{
                    \node (i\a) at (\a*90+45: 1.5) {};
                }
    
                \node[fill=black] at (o2) {};
                \node[fill=black] at (o4) {};
                \node[fill=black] at (o6) {};
                
                \foreach \a[evaluate={\b=int(\a + 1)}] in {1,2,...,5}{
                    \draw (o\a) -- (o\b);
                }
                \draw (o6) -- (o1);
                \foreach \a in {1,2,...,6}{
                    \foreach \b in {1,2,...,4}{
                        \draw (o\a) -- (i\b);
                    }
                }
            \end{tikzpicture}
        \end{minipage}
        \begin{minipage}{0.32\textwidth}
            \centering
            \begin{tikzpicture}[scale=0.25]
                \tikzstyle{every node}=[circle, fill=white, draw=black, inner sep=1.5pt]
                \tikzstyle{labelnode}=[rectangle, draw=none, fill=none, inner sep=0pt]
                \foreach \a in {1,2,...,6}{
                    \node (o\a) at (\a*60+30: 4) {};
                }
                \foreach \a in {1,2,...,4}{
                    \node (i\a) at (\a*90+45: 1.5) {};
                }
    
                \node[fill=black] at (i1) {};
                \node[fill=black] at (i2) {};
                \node[fill=black] at (i3) {};
                \node[fill=black] at (i4) {};
                
                \foreach \a[evaluate={\b=int(\a + 1)}] in {1,2,...,5}{
                    \draw (o\a) -- (o\b);
                }
                \draw (o6) -- (o1);
                \foreach \a in {1,2,...,6}{
                    \foreach \b in {1,2,...,4}{
                        \draw (o\a) -- (i\b);
                    }
                }
            \end{tikzpicture}
        \end{minipage}
        \end{minipage}
        \caption{ $G = C_6 + \overline{K_4}$: uniformly weighted designs in each level form a poset by inclusions (left). The uniformly weighted minimal designs of strength $2$ (right).} 
        \label{fig:C6K4bar-designs}
    \end{figure}
    Since all the minimal designs are uniformly weighted, they form a poset by inclusion (\autoref{fig:C6K4bar-designs}, left). For example, the sets $\{1,3,5\}$ and $\{2,4,6\}$ are themselves designs of strengths 1, 2 and 3. They are no longer designs of strength 4, but their union $\{1,2,3,4,5,6\}$ is a uniformly weighted design of strength 4.
\end{example}

While every minimal positively weighted design of every strength was uniformly weighted for $C_6 + \overline{K_4}$, this is far from true in general. In \autoref{sec:algebraic-methods-no-designs} we construct families of graphs with no uniformly weighted designs of any strength. An intermediate situation would be if $G$ has uniformly weighted designs of a given strength, but none of them are minimal. We provide such a graph in the following example.

\begin{example} \label{ex:triangular-honeycomb}
    Let $G$ be the {\em Triangular Honeycomb Queen $4$} graph.
    The Laplacian of $G$ has eigenvalues $0, 5^{(3)}, 6^{(2)}, 8^{(3)}, 9$. The graph and its $U$-matrix are shown in \autoref{fig:triangular-honeycomb-queen4}.
    
    \begin{figure}[!h]
        \begin{minipage}{0.40\textwidth}
            \centering
            \begin{tikzpicture}[scale=0.8]
                \tikzstyle{every node}=[circle, fill=white, draw=black, inner sep=2pt, label distance=-0.3em]
                \tikzstyle{honey}=[fill=gray!10, draw=gray!40, line width=.55pt]

                \coordinate (10) at (0, 0); 
                \coordinate (9) at (0:1.732);
                \coordinate (8) at (60:1.732);
                \coordinate (7) at (120:1.732);
                \coordinate (6) at (180:1.732);
                \coordinate (3) at (240:1.732);
                \coordinate (4) at (300:1.732);
                \coordinate (2) at ($(9)+(-60:1.732)$);
                \coordinate (1) at ($(3)+(180:1.732)$);
                \coordinate (5) at ($(7)+(60:1.732)$);

                \path[honey] (1) ++(-90:1) -- ++(30:1) -- ++(90:1) -- ++(150:1) -- ++(-150:1) -- ++(-90:1) -- cycle;
                \path[honey] (2) ++(-90:1) -- ++(30:1) -- ++(90:1) -- ++(150:1) -- ++(-150:1) -- ++(-90:1) -- cycle;
                \path[honey] (3) ++(-90:1) -- ++(30:1) -- ++(90:1) -- ++(150:1) -- ++(-150:1) -- ++(-90:1) -- cycle;
                \path[honey] (4) ++(-90:1) -- ++(30:1) -- ++(90:1) -- ++(150:1) -- ++(-150:1) -- ++(-90:1) -- cycle;
                \path[honey] (5) ++(-90:1) -- ++(30:1) -- ++(90:1) -- ++(150:1) -- ++(-150:1) -- ++(-90:1) -- cycle;
                \path[honey] (6) ++(-90:1) -- ++(30:1) -- ++(90:1) -- ++(150:1) -- ++(-150:1) -- ++(-90:1) -- cycle;
                \path[honey] (7) ++(-90:1) -- ++(30:1) -- ++(90:1) -- ++(150:1) -- ++(-150:1) -- ++(-90:1) -- cycle;
                \path[honey] (8) ++(-90:1) -- ++(30:1) -- ++(90:1) -- ++(150:1) -- ++(-150:1) -- ++(-90:1) -- cycle;
                \path[honey] (9) ++(-90:1) -- ++(30:1) -- ++(90:1) -- ++(150:1) -- ++(-150:1) -- ++(-90:1) -- cycle;
                \path[honey] (10) ++(-90:1) -- ++(30:1) -- ++(90:1) -- ++(150:1) -- ++(-150:1) -- ++(-90:1) -- cycle;

                \draw (1) -- (6) -- (7) -- (5);
                \draw (3) -- (10) -- (8);
                \draw (4) -- (9);
                \draw (3) to[bend left=30] (8);
                \draw (1) to[bend left=30] (7);
                \draw (6) to[bend left=30] (5);
                \draw (1) to[bend left=30] (5);

                \draw (5) -- (8) -- (9) -- (2);
                \draw (7) -- (10) -- (4);
                \draw (6) -- (3);
                \draw (7) to[bend left=30] (4);
                \draw (5) to[bend left=30] (9);
                \draw (8) to[bend left=30] (2);
                \draw (5) to[bend left=30] (2);

                \draw (2) -- (4) -- (3) -- (1);
                \draw (9) -- (10) -- (6);
                \draw (8) -- (7);
                \draw (9) to[bend left=30] (6);
                \draw (2) to[bend left=30] (3);
                \draw (4) to[bend left=30] (1);
                \draw (2) to[bend left=30] (1);

                \node (1) at (1) [label=below left:$1$]{};
                \node (2) at (2) [label=below right:$2$]{};
                \node (3) at (3) [label=below left:$3$]{};
                \node (4) at (4) [label=below right:$4$]{};
                \node (5) at (5) [label=above:$5$]{};
                \node (6) at (6) [label=left:$6$]{};
                \node (7) at (7) [label=above left:$7$]{};
                \node (8) at (8) [label=above right:$8$]{};
                \node (9) at (9) [label=right:$9$]{};
                \node (10) at (10) [label=$10$]{};
            \end{tikzpicture}
        \end{minipage}
        \begin{minipage}{0.56\textwidth}
            \centering
            $U = 
            \left( \begin{array}{cccccccccc}
                 1 & 1 & 1 & 1 & 1 & 1 & 1 & 1 & 1 & 1 \\
                 \hline
                 0 & -1 & -1 & -1 & 1 & 1 & 1 & 0 & 0 & 0 \\
                 -1 & 0 & -1 & -1 & 1 & 0 & 0 & 1 & 1 & 0 \\
                 -1 & -1 & 1 & 1 & -2 & 0 & 0 & 0 & 0 & 2 \\
                 \hline
                 0 & 0 & -1 & 1 & 0 & -1 & 1 & 0 & 0 & 0 \\
                 0 & 0 & -1 & 1 & 0 & 0 & 0 & -1 & 1 & 0 \\
                 \hline
                 0 & 2 & -1 & -1 & -2 & 1 & 1 & 0 & 0 & 0 \\
                 2 & 0 & -1 & -1 & -2 & 0 & 0 & 1 & 1 & 0 \\
                 1 & 1 & -1 & -1 & -1 & 0 & 0 & 0 & 0 & 1 \\
                 \hline
                 0 & 0 & 1 & -1 & 0 & -1 & 1 & -1 & 1 & 0 \\
            \end{array} \right)$
        \end{minipage}
        \caption{Left: The Triangular Honeycomb Queen 4 graph. Right: its $U$-matrix.}
        \label{fig:triangular-honeycomb-queen4}
    \end{figure}

The eigenpolytope $P_1$ is $6$-dimensional, and all 10 points in $\cB_1$ are its vertices. The polytope has 31 facets, of which 25 are simplicial (with 6 vertices) and 6 are nonsimplicial (with 7 vertices). Therefore, the minimal positively weighted designs of strength one have cardinalities $4$ and $3$. They are
$$\begin{array}{ll}
\textup{weights} & \textup{designs}\\
\hline
(2,1,1,1) & \{3, 5, 6, 8\}, \{3, 5, 6, 9\}, \{3, 5, 7, 8\}, \{3, 5, 7, 9\}, \{4, 5, 6, 8\}, \{4, 5, 6, 9\}, \{4, 5, 7, 8\}, \{4, 5, 7, 9\};\\

(1,1,2,1) & \{2, 3, 6, 8\}, \{2, 3, 6, 9\}, \{2, 3, 7, 8\}, \{2, 3, 7, 9\}, \{2, 4, 6, 8\}, \{2, 4, 6, 9\}, \{2, 4, 7, 8\}, \{2, 4, 7, 9\};\\

(1,1,1,2) & \{1, 2, 5, 10\}, \{1, 3, 6, 8\}, \{1, 3, 6, 9\}, \{1, 3, 7, 8\}, \{1, 3, 7, 9\}, \{1, 4, 6, 8\}, \{1, 4, 6, 9\}, \{1, 4, 7, 8\}, \{1, 4, 7, 9\};\\

(2,2,1) &  \{1, 8, 10\}, \{1, 9, 10\}, \{2, 6, 10\}, \{2, 7, 10\}, \{3, 5, 10\}, \{4, 5, 10\}.
\end{array}$$

Note that none of these minimal designs are uniformly weighted. However, $G$ does have a uniformly weighted design of strength 1: the set $W = \{3,4,5,7,9\}$. One can check that $(0,0,1,1,1,0,1,0,1,0)$ is orthogonal to rows $2,3,4$ of $U$, which form a basis for $\Lambda_1$. This design  
$W$ is a positive linear combination of the minimal designs $\{3,5,7,9\}$ and $\{4,5,7,9\}$ each with weights $(2,1,1,1)$. Indeed, $$(0,0,1,1,1,0,1,0,1,0) = \frac{1}{2} (0,0,2,0,1,0,1,0,1,0) + \frac{1}{2} (0,0,0,2,1,0,1,0,1,0).$$
Note that $W$ violates the dimension bound as $|W| = 5 > 4 = s_1$. One can check that no $0/1$ vector supported on four or fewer elements is orthogonal to the eigenbasis of $\Lambda_1$. Therefore the graph $G$ has uniformly weighted designs of strength 1, but none that are minimal or meet the dimension bound.
\end{example}

It was hard to find small integral graphs that behave like \autoref{ex:triangular-honeycomb}. This raises an interesting question: if a generic graph has uniformly weighted designs at some strength, should at least one of them meet the dimension bound? Alternatively, is the Triangular Honeycomb Queen 4 actually characteristic of the generic behavior, and most graphs with uniformly weighted designs have none that meet the dimension bound?

Referring back to \autoref{eq:U}, $G$ has a uniformly weighted design of strength $t$ if and only if $A e_W = 0$ for some nonempty $W \subset [n]$. This is a {\em multidimensional subset sum problem}, which asks whether a proper nonempty subset of a given set of vectors (the columns of $A$ in our case) sums to a target vector (the zero vector in our case). The subset sum problem with integer, and hence rational, input is NP-complete \cite{garey1979computers}. 

\begin{proposition} \label{prop:subset-sum}
    The multidimensional subset sum problem with rational input is equivalent to the problem of deciding whether a positively weighted graph $G$ has a uniformly weighted design of strength one.
\end{proposition}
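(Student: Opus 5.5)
The plan is to prove the two reductions separately. In one direction, every instance of multidimensional subset sum becomes a question about strength-one designs of a positively weighted graph. In the other, the design question becomes a rational subset sum problem. Throughout, a weighted graph has Laplacian $L = D - \mathcal{W}$ with positive rational edge weights. As in \autoref{eq:U}, a uniformly weighted design of strength one is a nonempty proper $W$ with $A e_W = 0$, where the rows of $A$ span $\Lambda_1$.

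\emph{Subset sum $\Rightarrow$ designs.} Take rational vectors $v_1,\dots,v_n \in \QQ^m$. We ask for a nonempty proper subset summing to $0$; a nonzero target can be absorbed by the usual trick of appending an extra vector.
\begin{enumerate}
\item Set $v_{n+1} := -\sum_{i\le n} v_i$, and let $M$ be the $m\times(n+1)$ matrix with these columns. Every row of $M$ is now orthogonal to $\ones$.
\item Zero-sum subsets of the new configuration correspond to zero-sum subsets of the old one. If $W \ni n+1$, then $\sum_{W} = 0$ exactly when $[n]\setminus(W\setminus\{n+1\})$ sums to zero. The degenerate cases where a set is empty or all of $[n]$ need a short separate check.
\item Let $P = M^\top (MM^\top)^{+} M$ be the orthogonal projection onto the row space of $M$. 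After deleting dependent rows we may use $(MM^\top)^{-1}$, so $P$ is rational. Also $P\ones = 0$.
\item Define
\[ L := \lambda_1 P + \lambda_2\paren{I - \tfrac1{n+1}J - P}, \qquad 0<\lambda_1<\lambda_2 \text{ rational}. \]
Its row sums vanish. Its off-diagonal entries are $-\lambda_2/(n+1) + (\lambda_1-\lambda_2)P_{ij}$, and these are negative as soon as $(\lambda_2-\lambda_1)\max|P_{ij}| < \lambda_2/(n+1)$.
\end{enumerate}
So $L$ is the Laplacian of a connected, positively weighted complete graph with eigenvalues $0,\lambda_1,\lambda_2$, and $\Lambda_1$ is exactly the row space of $M$. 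Consequently $W$ is a uniformly weighted design of strength one if and only if $Me_W = 0$. All of these steps are polynomial-size rational computations.

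\emph{Designs $\Rightarrow$ subset sum.} Let $L$ be rational. Then $\lambda_1$ is algebraic, and $\Lambda_1 = \ker(L-\lambda_1 I)$ has a basis with entries in $K = \QQ(\lambda_1)$, obtained by Gaussian elimination over $K$. Fix the $\QQ$-basis $1,\lambda_1,\dots,\lambda_1^{r-1}$ of $K$, where $r$ is the degree of the minimal polynomial of $\lambda_1$. Each row $a$ of $A$ then expands as $a=\sum_j \lambda_1^j a^{(j)}$ with $a^{(j)}\in\QQ^n$. Since $e_W$ is rational and the powers of $\lambda_1$ are $\QQ$-independent, $a^\top e_W = 0$ holds if and only if $a^{(j)\top}e_W=0$ for every $j$. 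Stacking all the $a^{(j)}$ gives a rational matrix $\tilde A$ with $Ae_W=0 \iff \tilde A e_W = 0$. This is a rational multidimensional subset sum instance on the columns of $\tilde A$ with target $0$.

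The main obstacle is this second direction. It requires computing the minimal polynomial of $\lambda_1$ and identifying which root is the second smallest. Both can be done exactly: factor the characteristic polynomial over $\QQ$ and isolate real roots. However, the field degree $r$ and the bit sizes must be controlled for the equivalence to be a polynomial reduction. If that bound is delicate, I would fall back on a weaker claim: both problems are decision problems about $0/1$ vectors in the kernel of an explicitly computable rational matrix. The first direction already gives NP-hardness, which is what the surrounding discussion needs.
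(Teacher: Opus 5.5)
Your proof is correct. For the direction the paper actually argues (subset sum $\Rightarrow$ designs) it follows the same idea, but your execution differs in useful ways.

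The paper normalizes $A$ (full row rank, rational orthogonal rows) and then cites Lemma~3.1 of Babecki--Shiroma to realize any rational orthogonal $U$ with first row $\ones$ as the eigenbasis of a connected positively weighted graph. Your $L=\lambda_1P+\lambda_2(I-\tfrac{1}{n+1}J-P)$ is a self-contained proof of the case of that lemma that is needed: a small perturbation of the spectrum of the complete graph. It needs no Gram--Schmidt and no explicit block $B$, and since $|P_{ij}|\le 1$, the choice $\lambda_1=n+1$, $\lambda_2=n+2$ already works.

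More importantly, your padding fixes a step the paper skips. After observing that $\ones\in S_A$ rules out solutions, the paper simply asserts that one may assume $S_A\perp\ones$. That does not follow when $A\ones\neq 0$. Your padding column $v_{n+1}=-\sum_i v_i$, together with the complementation argument, is what actually justifies it. Under your ``nonempty proper'' convention, the separate check is just: if $\sum_i v_i=0$ already, skip the padding.

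For the converse, the paper only remarks before the proposition that the design question is the instance $Ae_W=0$. It ignores that $A$ generally has irrational algebraic entries. Your expansion over the power basis of $\QQ(\lambda_1)$ gives a genuinely rational instance. The rational row space of your $\tilde A$ is $\ker q(L)$, where $q$ is the minimal polynomial of $\lambda_1$; that is, $\Lambda_1$ together with all its Galois-conjugate eigenspaces. So this direction is the paper's Rational Weights Theorem in another guise. Computing $\ker q(L)$ by rational elimination avoids arithmetic in $K$ altogether. Together with factoring over $\QQ$ and real-root isolation, every step runs in polynomial time, so your fallback to a weaker claim is unnecessary.

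One small caution: appending $-b$ does not by itself absorb a nonzero target, because zero-sum subsets avoiding the new vector remain. Neither you nor the paper needs this, since both work with target $0$.
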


\begin{proof}
    Consider a subset sum problem of the form $Ae_W = 0$, where $A \in \QQ^{p \times n}$ with $p < n$. Let $S_A$ denote the row span of $A$. We can assume that $\rank(A)=p$, since if $Ae_W=0$ has a solution and the rows are dependent, then we can delete rows until $A$ has full rank without affecting the solution. We can also replace the rows of $A$ with any basis for $S_A$, since $Ae_W = 0$ has a solution if and only if $QAe_W=0$ has a solution for any invertible matrix $Q$. In particular, we can assume that the rows of $A$ are rational and pairwise orthogonal. If $\ones \in S_A$, then the subset sum problem has no nonzero solution. Therefore, we may assume that $S_A$ is orthogonal to $\ones$. Lastly, if $p = n-1$, then $Ae_W = 0$ if and only if $e_W$ is the unique element up to scaling in the nullspace of $A$, which is $\ones$ and thus not a valid solution. So we can also assume $p < n-1$. 

    By Lemma 3.1 in \cite{babecki-shiroma}, any rational matrix $U$ with the block structure in \autoref{eq:U}, i.e., the first row is $\ones$, the next block is $A$ as in the previous paragraph, and the final block is $B$ chosen so that all rows of $U$ are pairwise orthogonal, is the Laplacian of a connected graph $G$ with positive rational edge weights. 
    This result holds under any partition of $A$ and $B$ into nonempty row blocks. If $A$ is just one block, then $Ae_W = 0$ has a solution if and only if $G$ has a uniformly weighted design of strength one.
\end{proof}

\section{Polyhedral Geometry of 2-valued Configurations}
\label{sec:2-valued}

In this section we develop the geometry of $2$-valued configurations, which will say more about \autoref{qn:exist} and help us with \autoref{qn:size}.  
If $\cP$ is $2$-valued with respect to $a^\top x$, then there are two faces $F_0$ and $F_1$ of the polytope $P = \conv(\cP)$ that minimize and maximize $a^\top x$ over $P$ respectively, and all points of $\cP$ lie in these two faces.  In particular, a necessary condition for $\cP$ to be $2$-valued is that no $p_j$ lies in the interior of $P$, although we can have points $p_j$ in the relative interior of $F_0$ and $F_1$. We say that the faces $F_0$ and $F_1$, as well as the direction $a$, are \textbf{$2$-witnesses} of $\cP$.  We remark that a related, but far stronger notion, is that of $2$-level configurations:  $\cP$ (with polytope $P=\conv(\cP)$) is \emph{$2$-level} or \emph{compressed} if all facets of $P$ are $2$-witnesses of $\cP$ (see for example, \cite{gouveia-parrilo-thomas2010}).  

\begin{example}
    Consider $\cP = \{(1,1,1), (1,0,0),(0,1,0),(0,0,1)\}$, whose convex hull is a tetrahedron inside the unit cube $[0,1]^3$. A tetrahedron is $2$-level and every facet is a $2$-witness of $\cP$. However, the tetrahedron has edges that are also $2$-witnesses of $\cP$. For example, the edge containing $p_1$ and $p_2$ is supported by the hyperplane $x_1=1$ while $p_3$ and $p_4$ lie on the hyperplane $x_1=0$. 
\end{example}

By \autoref{cor:minimal-unif-designs}, a minimal uniformly weighted design of strength $t$ corresponds to a facet of the eigenpolytope $P_t$. All such minimal designs meet the dimension bound, but \autoref{qn:size} is not restricted to minimal designs. What can be said about the sizes of uniformly weighted designs of strength $t$ when they do exist, but none are minimal because no facet of $P_t$ is a 2-witness of $\cB_t$?
To account for lower-dimensional faces of $P$ with large cardinality, which correspond to small designs, we make the following definition.

\begin{definition}
    A face of $P=\conv(\cP)$ that is a $2$-witness of $\cP$ and contains at least $\dim(P)$ points of $\mathcal{P}$ is said to \textbf{meet the dimension bound}. 
\end{definition}

In \autoref{ex:triangular-honeycomb} we saw a graph in which there are uniformly weighted designs of strength $t=1$, but all of them violate the dimension bound; these designs are carried by faces of $P_1$ that do not meet the dimension bound. We remark that such examples do not exist when $t$ becomes large enough, making \autoref{qn:size} interesting only when $t \leq \lfloor \frac{n}{2} \rfloor$.  

\begin{lemma} \label{lem:half-size-dim-bound}
    If $s_t = \sum_{i=0}^t \dim \Lambda_i > \lfloor \frac{n}{2} \rfloor$ and $G$ has uniformly weighted designs of strength $t$, then there is one that meets the dimension bound.
\end{lemma}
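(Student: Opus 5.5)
The plan is to use the complementation symmetry observed at the start of \autoref{sec:weights-of-designs}: $W$ is a uniformly weighted design of strength $t$ if and only if $\overline{W} = [n]\setminus W$ is one. Given any uniformly weighted design $W$ of strength $t$, I will replace it by whichever of $W$ and $\overline{W}$ is smaller. The smaller of the two has cardinality at most $\lfloor n/2 \rfloor$. By hypothesis $\lfloor n/2\rfloor < s_t$, so this design has cardinality strictly less than $s_t$. In particular $|W| \le s_t$, which is exactly the definition of meeting the dimension bound for a design.

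For completeness I will also phrase this geometrically, since the section has just defined the polyhedral notion of a face meeting the dimension bound. By \autoref{thm:unif-wted-two-valued} and its proof, $W$ corresponds to a functional $a^\top x$ taking two values on $\cB_t$. The points $b_i$ with $i \notin W$ lie on one $2$-witness face $F_0$ of $P_t$, and those with $i\in W$ lie on the opposite face $F_1$. The face $F_0$ contains $n-|W|$ points of $\cB_t$. Since $\dim P_t = \beta = n - s_t$, choosing $W$ as the smaller of the complementary pair gives
\[n - |W| \ge n - \lfloor n/2\rfloor \ge n - s_t = \dim P_t.\]
So the witness face $F_0$ meets the dimension bound as well.

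There is no real obstacle here; the argument is just a pigeonhole choice between $W$ and $\overline{W}$. The only point needing care is checking that $\overline{W}$ is nonempty and proper, so that it is a genuine design. This holds because a nonempty proper $W$ has a nonempty proper complement. One should also note that $W=[n]$ is excluded as trivial, or else its complement is empty. Finally, I would make sure the strict inequality $s_t > \lfloor n/2\rfloor$ is used so that $|W| \le \lfloor n/2 \rfloor \le s_t$ holds, which gives the bound even in the boundary case.
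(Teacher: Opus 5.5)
Your proposal is correct and is essentially the paper's complementation argument: the paper assumes a design $W$ violates the bound and deduces $|\overline{W}| < \lfloor n/2 \rfloor < s_t$, while you directly take the smaller of $W$ and $\overline{W}$, and your geometric restatement via $\dim P_t = \beta = n - s_t$ is a faithful translation. One small correction to your last remark: your argument only needs $s_t \ge \lfloor n/2 \rfloor$, so the strict inequality in the hypothesis is never actually used.
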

\begin{proof} 
    Suppose there is a uniformly weighted design $W$ of strength $t$ but it does not meet the dimension bound. Then, by hypothesis, $|W| > s_t \geq \lfloor \frac{n}{2} \rfloor + 1$, and so $|\overline{W}|  < \lfloor \frac{n}{2} \rfloor < s_t$. Recall that 
    $\overline{W}$ is again a uniformly weighted design of strength $t$, and it meets the dimension bound.
\end{proof}

\begin{example} \label{ex:hypercube-half-design}
The \textbf{hypercube graph} $H(n, 2)$ is the graph with vertex set $\FF_2^n$ and an edge between vertices that differ in one coordinate. Every $y \in \FF_2^n$ creates an eigenvector $\chi_y$ of the Laplacian of $H(n,2)$ with eigenvalue $2|y|$, given by $\chi_y(x) = (-1)^{y \cdot x}$.  Here $|y|$ denotes the Hamming weight of $y$. Therefore, $H(n,2)$ has $n$ nontrivial eigenspaces, with the $i$-th eigenspace of dimension $\binom{n}{i}$ and eigenvalue $2i$. 

\begin{figure}[!h]
    \begin{minipage}{0.48\textwidth}
        \centering
        \begin{tikzpicture}[scale=1]
            \tikzstyle{every node}=[circle, fill=white, draw=black, inner sep=2pt]
            \foreach \x in {0, 1} {\foreach \y in {0, 1} {\foreach \z in {0, 1} {\foreach \w in {0, 1} {
                \node (v\x\y\z\w) at (-0.2*\x + 1*\y + 1.4*\z + 1*\w, 1.4*\x + 1*\y + 0.2*\z - 1*\w) {};
            }}}}
            \foreach \x in {0, 1} {\foreach \y in {0, 1} {\foreach \z in {0, 1} {
                \draw (v\x\y\z0) -- (v\x\y\z1);
                \draw (v\x\y0\z) -- (v\x\y1\z);
                \draw (v\x0\y\z) -- (v\x1\y\z);
                \draw (v0\x\y\z) -- (v1\x\y\z);    
            }}}
        \end{tikzpicture}    
    \end{minipage}
    \begin{minipage}{0.48\textwidth}
        \centering
        \begin{tikzpicture}[scale=2]
            \tikzstyle{every node}=[circle, fill=white, draw=black, inner sep=2pt]
            \foreach \x in {0, 1} {
                \foreach \y in {0, 1} {
                    \foreach \z in {0, 1} {
                        \node (v\x\y\z) at (\x, \y, -\z) {};
                    }
                }
            }
            \foreach \xyz in {000, 110, 101, 011} {
                \node[fill=black] at (v\xyz) {};
            }
            \foreach \x in {0, 1} {
                \foreach \y in {0, 1} {
                    \draw (v\x\y0) -- (v\x\y1);
                    \draw (v\x0\y) -- (v\x1\y);
                    \draw (v0\x\y) -- (v1\x\y);
                }
            }
        \end{tikzpicture}
    \end{minipage}
    \caption{Left: the hypercube $H(4, 2)$. Right: the hypercube $H(3, 2)$ with a design averaging the first two nontrivial eigenspaces.}
    \label{fig:hypercube-designs}
\end{figure}
The last eigenspace of the hypercube $H(n, 2)$ is spanned by the single eigenvector $\chi_{\ones}$. It takes on value 1 at vertices with an even number of 0s, and value -1 otherwise. Therefore, for all nontrivial strengths $1 \le t \le n-1$, the last coordinate coming from $\chi_{\ones}$ is a 2-witness direction of the eigenpolytope. This direction produces two designs with $2^{n-1}$ vertices each. If $t \ge n/2$, then $s_t = \sum_{i=0}^t \dim \Lambda_i = \sum_{i=0}^t \binom{n}{i} \ge 2^{n-1}$, and these two designs meet the dimension bound as predicted by \autoref{lem:half-size-dim-bound}.
\end{example}

\begin{lemma} \label{lem:3dneedsfacet}
     If $\cP \subset \RR^d$ is $2$-valued, with $P = \conv(\cP)$ and $d = \dim(P) \le 3$, then $\cP$ always has a $2$-witness that is a facet of $P$. 
\end{lemma}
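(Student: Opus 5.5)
The plan is to argue by cases on $d$, using the two $2$-witness faces. Let $a^\top x$ take exactly the two values $c_0<c_1$ on $\cP$, and let $F_0,F_1$ be the faces of $P$ minimizing and maximizing $a^\top x$. Every point of $\cP$ lies in $F_0\cup F_1$. Hence $P=\conv(F_0\cup F_1)$, and so
\[
d=\dim P\le \dim F_0+\dim F_1+1 .
\]
If $F_0$ or $F_1$ is a facet of $P$, we are done, since that facet is itself a $2$-witness. So I assume that neither is a facet, i.e.\ $\dim F_0,\dim F_1\le d-2$.

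For $d=1$, the faces $F_0,F_1$ are the two endpoints of a segment, and these are facets. For $d=2$, the assumption forces $F_0,F_1$ to be single vertices. Then $P$ is a segment, contradicting $d=2$. For $d=3$, the inequality above together with $\dim F_i\le 1$ forces both $F_0$ and $F_1$ to be edges. They lie in the distinct parallel planes $a^\top x=c_0$ and $a^\top x=c_1$, and their affine hulls must together span $\RR^3$. So the edges are skew and $P$ is the tetrahedron with vertices $u_0,v_0\in F_0$ and $u_1,v_1\in F_1$.

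In this tetrahedron case I would take the facet $F=\conv(u_0,v_0,u_1)$, with outer normal $b$. The functional $b^\top x$ is constant on $F$, so it is constant on every point of $\cP$ lying in $F_0$ and at $u_1$. The point $v_1$ lies on a parallel translate of the supporting plane of $F$. For $F$ to be a $2$-witness, every remaining point of $\cP$ must lie on one of these two planes. Points in the relative interior of $F_0$ cause no difficulty, since they lie in $F$.

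The main obstacle is the set of points of $\cP$ lying in the relative interior of the edge $F_1$. For such a point $p=\lambda u_1+(1-\lambda)v_1$ with $0<\lambda<1$, the value $b^\top p$ lies strictly between the two levels, which would give a third value. The same problem arises for each of the four facets, with the roles of the two edges exchanged. So the key step is to show that, under the $2$-valued hypothesis, at least one of the two skew edges contains no points of $\cP$ other than its endpoints. If $F_1$ has this property, the facet $F$ above is the required witness; if $F_0$ does, the symmetric choice $\conv(u_1,v_1,u_0)$ works.

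I would first try to rule out interior points on both skew edges using the non-coplanarity and the structure of $\cB_t$. If that fails, I would check the six-point configuration formed by the vertices of a tetrahedron together with the midpoints of two opposite edges. That check would tell me whether the statement needs the extra hypothesis that one of $F_0,F_1$ carries only its endpoints.
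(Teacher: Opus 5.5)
Your case analysis matches the paper's. For $d\le 2$ the claim is immediate. For $d=3$ the bound $d\le\dim F_0+\dim F_1+1$ forces $F_0,F_1$ to be skew edges whose four endpoints span a tetrahedron. The paper closes this case with the sentence that a tetrahedron is $2$-level, so all of its facets are $2$-witnesses of $\cP$. You have correctly seen why that is not enough. $2$-levelness controls only the four vertices. Every facet of this tetrahedron is the convex hull of one witness edge and one endpoint of the other, so its normal takes a strictly intermediate value at any point of $\cP$ in the relative interior of that other edge. Hence, in this case, some facet is a $2$-witness exactly when at least one of $F_0,F_1$ carries no point of $\cP$ in its relative interior.

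The key step you isolate cannot be proved, because it is false, and so is the lemma as stated. Your proposed six-point test is a counterexample. Take $u_0=(1,0,0)$, $v_0=(-1,0,0)$, $u_1=(0,1,1)$, $v_1=(0,-1,1)$, together with the midpoints $m_0=(0,0,0)$ and $m_1=(0,0,1)$. This $\cP$ is non-coplanar and $2$-valued, since $x_3$ takes only the values $0$ and $1$. Its convex hull $P$ is the tetrahedron on $u_0,v_0,u_1,v_1$. The facets of $P$ lie on $x_2-x_3=0$, $x_2+x_3=0$, $x_1+x_3=1$ and $x_3-x_1=1$. On $\cP$ these four functionals take the value sets $\{0,-1,-2\}$, $\{0,1,2\}$, $\{-1,0,1\}$ and $\{-1,0,1\}$. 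A facet of a full-dimensional polytope has only its normal as a candidate $2$-witness direction, so no facet of $P$ is a $2$-witness. Neither non-coplanarity nor any structure of $\cB_t$ can rescue the argument. The lemma is stated for arbitrary configurations, and the paper explicitly allows points in the relative interiors of $F_0$ and $F_1$.

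What your argument does establish is a corrected statement. Add the hypothesis that one of the two witness edges contains no point of $\cP$ in its relative interior; this holds, for instance, when every point of $\cP$ is a vertex of $P$ (repetitions allowed). Then $\conv(u_0,v_0,u_1)$ or its mirror image is a $2$-witness facet, exactly as you describe. That version suffices for the paper's use of the lemma in \autoref{ex:icosahedron}, where all twelve points of $\cB_2$ are vertices of $P_2$.
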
 

\begin{proof}
    If $d=1$, then $P$ is a closed segment in $\RR$ and all points in $\cP$ lie at one of the two endpoints. Both endpoints are facets of $P$ that are $2$-witnesses of $\cP$.

    If $d=2$, then $\cP$ contains at least three distinct points. If a pair of faces $F_0$ and $F_1$ are $2$-witnesses of $\cP$, then one of them must contain at least two distinct points, which makes it a facet of $P$. 
        
    Finally, suppose $d=3$ and $F_0$ and $F_1$ are a pair of faces that are $2$-witnesses of $\cP$.  If neither is a facet of $P$, then they are vertices or edges of $P$. If either is a vertex, then $P$ is a segment or triangle, which contradicts the fact that $d=3$. Therefore $F_0$ and $F_1$ are edges and $P$ is a tetrahedron. A tetrahedron is $2$-level, and all of its facets are $2$-witnesses of $\cP$.
    \end{proof}

On the contrary, when $d \geq 4$, a 2-valued configuration $\cP \subset \RR^d$ may have no 2-witness facets in $P$.

\begin{proposition} \label{prop:polyhedra-with-no-2-witness-facet}
    For a fixed $d \ge 4$, define 
    \[
        u_1 = e_1 + e_d, \quad u_2 = e_1 + e_2 + \cdots + e_{d-1}, \quad u_3 = e_1 + e_2 + \cdots + e_d, 
    \]
    and consider $\cP_d = \{e_2, e_3, \dots, e_d\} \cup \{u_1, u_2, u_3\} \subset \RR^d$. Let $P_d = \conv(\cP_d)$. Then:
    \begin{enumerate}[(a)]
        \item $\cP_d$ is $2$-valued, with $a = e_1$ witnessing values $\{0,1\}$.
        \item $P_d$ is a simplicial polytope (all of its faces are simplices). 
        \item No facet of $P_d$ is a $2$-witness of 
        $\cP_d$, and no $2$-witness face of $P_d$ meets the dimension bound. 
    \end{enumerate}
\end{proposition}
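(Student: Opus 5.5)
The plan is to compute the affine dependencies of $\cP_d$ explicitly and read all three statements off the resulting one-dimensional Gale dual. Part (a) is immediate: the first coordinate is $0$ on $e_2,\dots,e_d$ and $1$ on $u_1,u_2,u_3$, and $\cP_d$ is not coplanar because it contains the affinely independent set $\{e_2,\dots,e_d,u_1,u_2\}$ (one checks $\dim P_d = d$).

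\textbf{Affine dependencies.} Since $\cP_d$ has $d+2$ points and $P_d$ is $d$-dimensional, the space of affine dependencies $\sum_i \lambda_i p_i = 0$ with $\sum_i \lambda_i = 0$ is one-dimensional. Write the coefficients as $\alpha_j$ on $e_j$ and $\beta_1,\beta_2,\beta_3$ on $u_1,u_2,u_3$, and solve the linear system coordinate by coordinate.
\begin{itemize}
\item Coordinate $1$ gives $\beta_1+\beta_2+\beta_3=0$.
\item Coordinate $d$ gives $\alpha_d+\beta_1+\beta_3=0$.
\item Coordinates $2\le j\le d-1$ give $\alpha_j+\beta_2+\beta_3=0$.
\item The sum condition then forces $\sum_j\alpha_j=0$.
\end{itemize}
The first three equations give $\alpha_j=\beta_1$ for $2\le j\le d-1$ and $\alpha_d=\beta_2$. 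The last condition becomes $(d-2)\beta_1+\beta_2=0$. So, up to scaling, the dependency is
\[
\lambda(e_j)=1 \ (2\le j\le d-1),\quad \lambda(u_1)=1,\quad \lambda(u_3)=d-3,\quad \lambda(u_2)=-(d-2),\quad \lambda(e_d)=-(d-2).
\]

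\textbf{Part (b).} Because $d\ge 4$, every coefficient of $\lambda$ is nonzero. Hence any $d+1$ of the points are affinely independent. I will then use the standard Gale-duality description for $d+2$ points in $\RR^d$: a subset $S$ is the vertex set of a face if and only if $\cP_d\setminus S$ contains points with both positive and negative coefficient. It follows that every point is a vertex and every proper face is a simplex, so $P_d$ is simplicial. The facets are exactly the complements of pairs $\{p,q\}$ with $\lambda(p)>0>\lambda(q)$.

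\textbf{Part (c).} A facet $F$ with complement $\{p,q\}$ is a $2$-witness exactly when the affine functional $f$ vanishing on $F$ takes equal values at $p$ and $q$. Applying $f$ to the dependency kills every term except two, giving $\lambda(p)f(p)+\lambda(q)f(q)=0$. So $f(p)=f(q)$ if and only if $\lambda(p)=-\lambda(q)=d-2$. However, the positive coefficients are only $1$ and $d-3$, neither equal to $d-2$, so no facet is a $2$-witness.

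For the dimension bound, let $F_0,F_1$ be a $2$-witness pair. Together they cover all $d+2$ points, and each is a proper face, hence a simplex on at most $d$ points. A face containing at least $\dim(P_d)=d$ points of $\cP_d$ must therefore have exactly $d$ vertices, i.e.\ be a facet, which the previous step excludes.

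The main obstacle is stating the Gale-dual face criterion for $d+2$ points cleanly. If needed, I would verify it directly instead: a set $S$ spans a face if and only if some affine functional vanishes on $S$ and is positive on the complement, and evaluating the dependency against such a functional forces the complement's coefficients to have mixed signs.
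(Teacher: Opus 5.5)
Your proof is correct, and it takes a different route from the paper.

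\textbf{The paper's route.} It works directly in cube coordinates. It gets that every point is a vertex from $\cP_d\subset\{0,1\}^d$. It proves simpliciality by ruling out a facet with $d+1$ vertices, using a three-way case split on which vertex is missing and comparing values of a normalized facet functional $c^\top x$. For (c) it only asserts that ``a similar check'' over the possible pairs of excluded vertices shows that each facet normal separates them.

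\textbf{Your route.} You compute the unique affine dependency $\lambda$ once and read everything off it.
\begin{enumerate}
\item Full support of $\lambda$ gives simpliciality. This is exactly where $d\ge 4$ enters, through the coefficient $d-3$ on $u_3$.
\item Evaluating the dependency on a facet functional $f$ gives $\lambda(p)f(p)+\lambda(q)f(q)=0$. Hence $f(p)/f(q)\in\{d-2,\,(d-2)/(d-3)\}$, which is never $1$.
\end{enumerate}
This replaces the paper's unwritten case analysis in (c) with a one-line identity. It also lists all $2d$ facets explicitly, and it explains the threshold: at $d=3$ the coefficient of $u_3$ vanishes and $\lambda(p)=-\lambda(q)=1$ becomes possible, consistent with \autoref{lem:3dneedsfacet}.

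\textbf{Which half of the Gale criterion you need.} For (b) and (c) you only use the easy direction. If an affine functional vanishes on $S$ and is positive on the complement, the complement's coefficients must have mixed signs. Your fallback argument already proves this. The converse, that every positive--negative pair actually yields a facet, is not needed for any claim in the statement.

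\textbf{Trade-off.} The paper's approach avoids the dependency computation and stays in coordinates, at the cost of case work it leaves partly to the reader. Yours is shorter, complete, and structural.
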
 
\begin{figure}[ht] 
    \centering
    \begin{tikzpicture}[scale=0.6] 
        \tikzstyle{every node}=[circle, fill=black, draw=black, inner sep=1pt]
        \tikzstyle{bgbox}=[fill=gray!20, draw=gray!40, line width=.55pt, rounded corners]
        
        \draw[bgbox] (-1, -2) -- (-1, 4) -- (7, 4) -- (7, -2) -- cycle;
        \draw[bgbox] (11.5, -2) -- (11.5, 4) -- (19, 4) -- (19, -2) -- cycle;
        \tikzstyle{labelnode}=[rectangle, draw=none, fill=none, inner sep=0pt]

        \node (e2) at (0, 0) [label=below left:$e_2$]{};
        \node (e3) at (4, -1) [label=below:$e_3$]{};
        \node (ed) at (6, 1) [label=$e_d$]{};
        \node (ei) at (2, 3) [label=above left:$\dots$]{};
        
        \draw (e2) -- (e3) -- (ed) -- (ei);
        \draw (e3) -- (ei) -- (e2) -- (ed);

        \node (u1) at (12.5, -1) [label=below right:$u_1$]{};
        \node (u2) at (18, 0) [label=$u_2$]{};
        \node (u3) at (15, 3) [label=above right:$u_3$]{};

        \draw (u1) -- (u2) -- (u3) -- (u1);
        
        \draw[dashed, gray] (u1) -- (e3);
        \draw[dashed, gray] (u2) -- (ed);
        \draw[dashed, gray] (u3) -- (ei);
        \draw[dashed, gray] (u3) -- (ed);
        \draw[dashed, gray] (u2) -- (e3);
        
        \node[anchor=south west, labelnode] at (-0.9,-1.9) {$x_1 = 0$};
        \node[anchor=south east, labelnode] at (18.9,-1.9) {$x_1 = 1$};
    \end{tikzpicture}
    \caption{Schematic diagram of the configuration $\cP_d$ from \autoref{prop:polyhedra-with-no-2-witness-facet}. The hyperplanes along direction $e_1$ are marked. On the hyperplanes lie two faces of $P_d$, a $(d-2)$-simplex (left) and a triangle (right). Both faces are 2-witnesses of $\cP_d$.}
    \label{fig:non-faceted-config}
\end{figure}
\begin{proof}
    Since all points in $\cP_d$ are in $\{0,1\}^d$, they are all vertices of the unit cube $[0,1]^d$. Therefore any coordinate direction $a = e_i$ is a $2$-witness of $\cP_d$. In particular $a = e_1$ is a $2$-witness, with one face the $(d-2)$-simplex formed by $\{e_2, e_3, \dots, e_d\}$, and the other face the triangle formed by $\{u_1, u_2, u_3\}$ (\autoref{fig:non-faceted-config}). Since neither are facets of $\cP_d$, we have (a).

    Since $P_d$ has $d+2$ vertices and dimension $d$, each facet contains at most $d+1$ vertices and at least $d$ vertices. To establish (b), we need to argue that no facet contains $d+1$ vertices. Suppose $P_d$ has a facet $F$ with $d+1$ vertices, and it maximizes the functional $c^\top x$. Up to an affine transform, we may assume that $c^\top x = 1$ on $F$. Then for each $i$ such that $e_i \in F$, we have $c_i = 1$. We have three cases, based on which vertex of $P_d$ is not on the facet $F$, and in each case we will get a contradiction:
    \begin{enumerate}
        \item If $e_j$ for $j=2, \dots, d$ are in $F$, then  $c_2 = \cdots = c_d = 1$, and $c_1$ is unknown. We compute that \[c^\top u_1 = c_1 + 1, \quad c^\top u_2 = c_1 + (d-2), \quad c^\top u_3 = c_1 + (d-1).\] These three values are different, but two of them must coincide as their corresponding vertices lie in $F$. A contradiction!
        \item If $e_j$ for some $j \ne d$ is not in $F$, then  $u_1, u_2, u_3 \in F$, but we check that $c^\top u_2 = c_1 + (d-3) + c_j \ne c_1 + (d-2) + c_j = c^\top u_3$. A contradiction!
        \item If $e_d \not \in F$, then both $u_1$ and $u_3$ are in $F$ in this case, but we have $c^\top u_1 = c_1 + c_d \ne c_1 + (d-2) + c_d = c^\top u_3$. A contradiction!
    \end{enumerate}

    To show part (c), one can do a similar check as above on all the possible subsets of size $d$ that might produce a facet. There is a similar splitting into cases based on whether the two excluded vertices are of the form $u_i$, $e_j$ with $j=2, \dots, d-1$ or $e_d$. One can show in all cases that the resulting facet direction has different values on the two excluded vertices, and thus no facet is a 2-witness. 

   Since $P_d$ is simplicial, any $2$-witness face with $d$ vertices would be a $(d-1)$-simplex. Such a face would be a facet of $P_d$, which we already eliminated. Hence, no $2$-witness face of $\cP_d$ meets the dimension bound.
\end{proof}
One of the central tools for constructions in this paper is the following polyhedral fact:

\begin{lemma} \label{lem:2-valued-projections}
Suppose $\cP \subset \RR^d$ is $2$-valued and there is a configuration $\cP' \subset \RR^{d'}$ (with $d' > d$) such that $\cP = \pi(\cP')$ for a linear map $\pi \,:\, \RR^{d'} \rightarrow \RR^d$. Then $\cP'$ is also $2$-valued. 
\end{lemma}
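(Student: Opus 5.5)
The plan is to pull the witnessing functional back along $\pi$. Since $\cP$ is $2$-valued, there is a linear functional $a^\top x$ on $\RR^d$ taking exactly two values, say $c_0 \neq c_1$, on $\cP$. Write $\cP' = \{p'_1, \ldots, p'_r\}$ with $\pi(p'_j) = p_j$, where the points are matched as multisets. Define the functional $a'^\top y := a^\top \pi(y)$ on $\RR^{d'}$; equivalently $a' = \pi^\top a$, where $\pi$ is viewed as a $d \times d'$ matrix. Then for every $j$ we have $a'^\top p'_j = a^\top p_j \in \{c_0, c_1\}$. Both values are attained, because every $p_j$ is the image of some $p'_j$. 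So $a'^\top y$ takes exactly two values on $\cP'$.

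Next I check that $a' \neq 0$. This is automatic: if $a' = 0$, then $a'^\top$ would take the single value $0$ on $\cP'$, contradicting $c_0 \neq c_1$.

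The remaining point is the non-coplanarity clause in \autoref{def:2-valued conf}. I interpret it as saying that $\cP$ is not contained in a single affine hyperplane on which $a^\top x$ is constant, or more generally as a standing full-dimensionality convention. If $\cP'$ lay in an affine hyperplane, then I would restrict attention to its affine span, exactly as is done with $P_t$. The pulled-back functional still takes two distinct values there, so it is nonconstant on that span. Non-coplanarity only fails to transfer if one reads it as "full-dimensional in $\RR^{d'}$". In that case I would note that the functional restricted to the affine hull of $\cP'$ is still a $2$-witness, and that eigenconfigurations are full-dimensional anyway, which is how the lemma will be applied.

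I expect no real obstacle here; the only delicate step is this bookkeeping about the non-coplanarity hypothesis. The two-valued property itself is immediate from $a^\top \circ \pi$.
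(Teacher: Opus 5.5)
Your proof is correct, and its core idea is the paper's: pull the witnessing functional back along $\pi$.

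\textbf{How the paper does it.} It chooses $a' \in \pi^{-1}(a)$ with $\pi^\perp(a') = 0$. It then expands $a' \cdot p'$ using the splitting of $\RR^{d'}$ into $\RR^d$ and its orthogonal complement. That computation tacitly treats $\pi$ as the orthogonal projection onto a copy of $\RR^d$ inside $\RR^{d'}$. This holds in every application in the paper, since they are all coordinate projections, and in that case the paper's $a'$ is exactly your $\pi^\top a$.

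\textbf{What your formulation buys.} Writing $a'^\top y := a^\top \pi(y)$ works verbatim for an arbitrary linear map, as the statement actually allows. You never need to identify $\RR^d$ with a subspace of $\RR^{d'}$ or split off an orthogonal complement. For a general surjection, the paper's recipe (a preimage of $a$ with zero orthogonal component) gives a different functional. That functional need not be $2$-valued unless $\pi\pi^\top = I$.

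\textbf{The non-coplanarity remark.} Your remark is fair, and the paper does not address it. $\cP'$ can lie in an affine hyperplane of $\RR^{d'}$ even when $\cP = \pi(\cP')$ is full-dimensional. For example, take $\{(0,0),(1,0)\}$ projected onto the first coordinate. In that case only the ``exactly two values'' part of the conclusion survives. That is all the paper ever uses, and the eigenconfigurations $\cB_t$ to which the lemma is applied are full-dimensional anyway.
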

\begin{proof}
    Let $\pi^\perp$ be the projection of $\RR^{d'}$ into the orthogonal complement of $\RR^d$. For a $2$-witness direction $a$ of $\cP$, pick $a' \in \pi^{-1}(a)$ such that $\pi^\perp(a') = 0$. Then for any $p' \in \cP'$ we have 
    \[ a' \cdot p' = (\pi(a') + \pi^\perp(a')) \cdot (\pi(p') + \pi^\perp(p')) = a \cdot \pi(p') + a \cdot \pi^\perp(p') = a \cdot \pi(p').\]
    Since $\cP = \pi (\cP')$ is $2$-valued with respect to $a$, we get that $\cP'$ is also $2$-valued with $a'$ as 2-witness. 
\end{proof} 

We use the results above to showcase a graph in which there are uniformly weighted designs of some strengths, but not all strengths. 

\begin{example} \label{ex:icosahedron}
    Let $G$ be the icosahedral graph (the 1-skeleton of the icosahedron in $\RR^3$). It has four eigenspaces $\Lambda_0, \Lambda_1, \Lambda_2, \Lambda_3$, with Laplacian eigenvalues $0, (5-\sqrt{5})^{(3)}, 6^{(5)}, (5+\sqrt{5})^{(3)}$. Setting $\phi = \frac{1+\sqrt{5}}{2}$ (the golden ratio), and $\psi = \frac{1-\sqrt{5}}{2}$ (its conjugate), the matrix $U$ from \autoref{eq:U} is
    \[ U = \left( \begin{array}{rrrrrrrrrrrr}
            1 & 1 & 1 & 1 & 1 & 1 & 1 & 1 & 1 & 1 & 1 & 1 \\
            \hline
            \varphi & -\varphi & -\varphi & \varphi & -1 & -1 & 1 & 1 & 0 & 0 & 0 & 0 \\
            1 & -1 & -\varphi & \varphi & 0 & -\varphi & \varphi & 0 & 0 & 1 & -1 & 0 \\ 
            \varphi & -\varphi & -1 & 1 & 0 & -\varphi & \varphi & 0 & -1 & 0 & 0 & 1 \\
            \hline 
            -1 & -1 & 1 & 1 & 0 & 0 & 0 & 0 & 0 & 0 & 0 & 0 \\
            -1 & -1 & 0 & 0 & 1 & 1 & 0 & 0 & 0 & 0 & 0 & 0 \\
            -1 & -1 & 0 & 0 & 0 & 0 & 1 & 1 & 0 & 0 & 0 & 0 \\
            -1 & -1 & 0 & 0 & 0 & 0 & 0 & 0 & 1 & 1 & 0 & 0 \\
            -1 & -1 & 0 & 0 & 0 & 0 & 0 & 0 & 0 & 0 & 1 & 1 \\
            \hline
            \psi & -\psi & -\psi & \psi & -1 & -1    & 1    & 1 & 0  & 0 & 0  & 0 \\
            1    & -1    & -\psi & \psi & 0  & -\psi & \psi & 0 & 0  & 1 & -1 & 0 \\ 
            \psi & -\psi & -1    & 1    & 0  & -\psi & \psi & 0 & -1 & 0 & 0  & 1 
        \end{array} \right). \]

    For $t=2$, the eigenconfiguration $\cB_2$ consists of the $12$ (centrally symmetric) columns of the last three rows of $U$. Its convex hull $P_2$ is an (irregular) icosahedron in which all $12$ points of $\cB_2$ are vertices. Thus each facet contains three points, and each facet normal takes four distinct values on $\cB_2$. Since $P_2$ is $3$-dimensional, by \autoref{lem:3dneedsfacet}, $\cB_2$ is not 2-valued and $G$ has no uniformly weighted design of strength $2$.
    In \autoref{ex:alg-icosahedron} we will see a different reason for why there are no uniformly weighted designs in this case.

    When $t=1$, the eigenconfiguration $\cB_1$ consists of the $12$ columns from the last eight rows of $U$. Projecting this configuration onto its first five coordinates, the resulting points are the columns of the block of rows corresponding to $\Lambda_2$. The convex hull of this projection is a $5$-simplex with each point repeated twice at each of the six vertices. Since a simplex is $2$-level, by \autoref{lem:2-valued-projections}, $\cB_1$ is $2$-valued. Therefore, $G$ has uniformly weighted designs of strength $1$.
\end{example}

\begin{example} \label{ex:cube-smallsize-design}
    In \autoref{ex:hypercube-half-design} we saw that taking half of the vertices of the hypercube can produce a uniformly weighted design for all strengths. By applying \autoref{lem:2-valued-projections}, we can do even better.

     Let $t < \floor{2n/3}$. Pick $y_1, y_2, y_3 \in \FF_2^n$, with Hamming weights either $\floor{2n/3}$ or $\ceil{2n/3}$, so that in each coordinate two of the $y_i$ have a 1 and the other 0. Here $|y_i| > t$, so the corresponding eigenvector $\chi_{y_i}$ is a coordinate of the eigenconfiguration $\cB_t$. Moreover, $y_1 + y_2 + y_3 = 0$, and so for any $x$ we have
    \[
        \chi_{y_1}(x)\chi_{y_2}(x)\chi_{y_3}(x) = (-1)^{y_1 \cdot x}(-1)^{y_2 \cdot x}(-1)^{y_3 \cdot x} = (-1)^{(y_1 + y_2 + y_3) \cdot x} = (-1)^{0 \cdot x} = 1.
    \]
    Therefore the triple $(\chi_{y_1}(x), \chi_{y_2}(x), \chi_{y_3}(x))$ can only take on four out of eight possible values (the ones with product $1$). Considering the changes when we flip a bit in $x$, we can see that the four possibilities happen equally many times as $x$ ranges over $\FF_2^n$.

\begin{figure}[h!]
    \centering
    \begin{minipage}{0.36\textwidth}
        \centering
        \begin{tikzpicture}[scale=.9]
            \tikzstyle{every node}=[circle, fill=white, draw=black, inner sep=2pt]
            \foreach \x in {0, 1} {\foreach \y in {0, 1} {\foreach \z in {0, 1} {\foreach \w in {0, 1} {
                \node (v\x\y\z\w) at (-0.2*\x + 1*\y + 1.4*\z + 1*\w, 1.4*\x + 1*\y + 0.2*\z - 1*\w) {};
            }}}}
            \foreach \x in {0000, 0011, 1100, 1111} {
                \node[fill=black] at (v\x) {};
            }
            \foreach \x in {0, 1} {\foreach \y in {0, 1} {\foreach \z in {0, 1} {
                \draw (v\x\y\z0) -- (v\x\y\z1);
                \draw (v\x\y0\z) -- (v\x\y1\z);
                \draw (v\x0\y\z) -- (v\x1\y\z);
                \draw (v0\x\y\z) -- (v1\x\y\z);    
            }}}
        \end{tikzpicture}
    \end{minipage}
    \hfill
    \begin{minipage}{0.24\textwidth}
        \centering
        \begin{tikzpicture}
            \draw[->, thick] (0,0) -- (4,0) node[midway, above] {\Large $\pi$};
        \end{tikzpicture}
    \end{minipage}
    \hfill
    \begin{minipage}{0.36\textwidth}
        \centering
        \begin{tikzpicture}[
            scale=1,	
            back/.style={loosely dotted, thick},
            edge/.style={color=black, thick},
        ]
            \tikzstyle{every node}=[circle, fill=white, draw=black, inner sep=2pt]
            \node[fill=black] (A) at (1, 1, 1) {};
            \node (B) at (1, -1, -1) {};
            \node (C) at (-1, 1, -1) {};
            \node (D) at (-1, -1, 1) {};
    
            \draw[edge] (A) -- (B) -- (D) -- (C) -- (A);
            \draw[edge] (A) -- (D);
            \draw[edge, back] (B) -- (C);
        \end{tikzpicture}
    \end{minipage}
    \caption{A hypercube projecting to a tetrahedron, as described in \autoref{ex:cube-smallsize-design}. The marked vertices are preimages of a vertex of the tetrahedron, and form a design.}
    \label{fig:simplex-proj}
\end{figure}

    Let $\pi$ be the projection from the eigenpolytope $P_{t}$ onto the three coordinates corresponding to eigenvectors $\chi_{y_1}, \chi_{y_2}, \chi_{y_3}$. Then the above argument shows that $\pi(P_{t})$ is a tetrahedron (\autoref{fig:simplex-proj}), which has $2$-witness directions corresponding to a single vertex and the opposite triangle. By \autoref{lem:2-valued-projections}, the preimage of such a direction contains a $2$-witness direction on $P_{t}$, where one face contains a fourth and the other three fourths of all vertices. Therefore we have a design of size $2^{n-2}$ averaging the first $t=\floor{2n/3}$ eigenspaces.
\end{example}

We close out this section with two more polyhedral results. We have seen that lower-dimensional faces of $P$ can be $2$-witnesses of $\cP$. Such a face has infinitely many supporting hyperplanes, and so we might wonder which ones make the face a $2$-witness. It turns out that the 2-witness direction is unique for each face!

\begin{lemma} \label{lem:uniqueness-of-2-valued-direction}
    Any face in a full-dimensional polytope $P$ has at most one $2$-witness direction up to rescaling.
\end{lemma}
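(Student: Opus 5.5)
Let $P = \conv(\cP) \subset \RR^d$ be full-dimensional. Suppose a face $F_0$ of $P$ is a $2$-witness of $\cP$ with respect to two directions $a$ and $a'$. The plan is to show that $a' = \mu a$ for some nonzero scalar $\mu$.

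First I will fix notation. Let $F_0$ minimize $a^\top x$ over $P$ with value $c_0$. Every point of $\cP$ then lies in one of two sets: $\cP_0 = \cP \cap F_0$, or the set $\cP_1$ of points where $a^\top x = c_1 \ne c_0$. The analogous statement for $a'$ has the same $\cP_0$, which is just $\cP \cap F_0$. For $a'$ the face $F_0$ might be the maximizer rather than the minimizer; I can absorb this by allowing $\mu$ to be negative, so orientation causes no trouble.

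I also need $\cP_1$ to be the same for both directions. This holds because $\cP_1 = \cP \setminus F_0$ in either case. A point of $\cP$ outside $F_0$ cannot take the value $c_0$ of $a$, since the points of $P$ where $a^\top x = c_0$ are exactly the points of $F_0$, by definition of $F_0$ as the face cut out by $a$. The same reasoning applies to $a'$.

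Next I will consider the affine functions $\ell(x) = (a^\top x - c_0)/(c_1 - c_0)$ and $\ell'(x) = (a'^\top x - c_0')/(c_1' - c_0')$. Both take the value $0$ on $\cP_0$ and $1$ on $\cP_1$, so $\ell - \ell'$ is an affine function vanishing on all of $\cP$. Since $P$ is full-dimensional, $\cP$ affinely spans $\RR^d$, and therefore $\ell = \ell'$ identically. Comparing linear parts gives $a/(c_1-c_0) = a'/(c_1'-c_0')$, so $a'$ is a nonzero multiple of $a$.

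The only delicate point is the one above: I must make sure the face determines the partition of $\cP$ into the two level sets, independent of the chosen direction. This follows from the supporting-hyperplane definition of a face, $F_0 = P \cap \{a^\top x = c_0\}$, so I expect no real obstacle. The argument is essentially the uniqueness part of the proof of \autoref{thm:unif-wted-two-valued}, where the vector $\bar a$ is unique because the matrix with rows $(1, b_i^\top)$ has full column rank.
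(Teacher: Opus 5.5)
Your proof is correct, and it shares its setup with the paper's proof but concludes uniqueness differently. The paper also normalizes a 2-witness direction (a ``true'' 2-witness direction) so that $a_0 + a^\top x$ equals $1$ on $F$ and $0$ on the remaining vertices. In other words, $(a_0, a)$ solves the linear system with rows $(1, v_i^\top)$ and right-hand side $e_F$.

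The two arguments differ at the uniqueness step. The paper argues that two distinct solutions would give a whole affine line of true 2-witness directions, each lying in the normal cone of $F$. It then says a normal cone containing a nontrivial affine subspace contradicts full-dimensionality. This implicitly uses two facts: the normal cone of a face of a full-dimensional polytope is pointed, and a pointed closed cone contains no affine line.

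You observe directly that the difference of your two normalized affine functions vanishes on $\cP$, which affinely spans $\RR^d$. So the matrix with rows $(1, p^\top)$, $p \in \cP$, has full column rank, and the normalized system has at most one solution. As you note, this is the same full-column-rank fact behind the uniqueness of $\bar a$ in the proof of \autoref{thm:unif-wted-two-valued}.

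Your version is shorter and more elementary, and avoids normal cones entirely. It also makes explicit two points the paper builds into its definition of a true 2-witness direction. First, the face alone determines the split of $\cP$ into $\cP \cap F_0$ and $\cP \setminus F_0$. Second, orientation (whether $F_0$ is the minimizing or maximizing face) is absorbed by allowing a negative scalar. The paper's route offers a geometric picture, placing every 2-witness direction of $F$ inside its normal cone. The essential input, full-dimensionality of $P$, is the same in both.
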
 
\begin{proof}
    For this proof, we need to be a bit more careful about the definition of 2-witness direction, to avoid rescaling issues. Call $a$ a \emph{true 2-witness direction} if there exists a constant $a_0$ such that $a_0 + a^\top p = 1$ for all $p \in F$ and $a_0 + a^\top v = 0$ for all vertices $v$ of $P$ that do not lie on the face $F$. It is easy to check that any 2-witness direction can be rescaled to a true 2-witness direction. Suppose $v_1, v_2, \dots v_n$ are the vertices of $P$ and $e_F \in \{0, 1\}^n$ is the indicator vector of $F$. Then true 2-witness directions are precisely those vectors $a$ that have a corresponding $a_0$ such that $\begin{pmatrix}a_0 & a^\top\end{pmatrix}^\top$ is a solution to the equation    
    \begin{align} \label{eq:2-witness-solutions}
        \begin{bmatrix} 1  & v_1^\top \\ 1 & v_2^\top \\ \vdots & \vdots \\ 1 & v_n^\top  \end{bmatrix} \bx = e_F.
    \end{align}
    
    Suppose $a, b$ are distinct true $2$-witness directions for a face $F$. Both $\begin{pmatrix}a_0 & a^\top\end{pmatrix}^\top$ and $\begin{pmatrix}b_0 & b^\top\end{pmatrix}^\top$ are solutions to \autoref{eq:2-witness-solutions}, and therefore so is any affine combination $\begin{pmatrix}\mu a_0 + (1-\mu)b_0 & \mu a^\top + (1-\mu)b^\top\end{pmatrix}^\top$. We conclude that all affine combinations of $a$ and $b$ are true 2-witness directions of $F$.
    
    Now, if $a$ is a true 2-witness direction of $F$, we have $a^\top p = 1 - a_0$ for all $p \in F$, which is greater than the value $a^\top v = - a_0$ achieved at vertices of $P$ not on $F$. Therefore the functional $a^\top \bx$ is maximized over $P$ at $F$, and so $a$ lies on the normal cone of $F$. Therefore if we have distinct true $2$-witness directions $a, b$ of $F$, all affine combinations $\mu a + (1-\mu) b$ are in the normal cone of $F$. Consequently, the normal cone contains a nontrivial affine subspace which contradicts the fact that the polytope is full-dimensional.
\end{proof}

\begin{lemma} \label{lem:centrally-sym-2-face}
    Suppose $P = \conv(\cP)$ is centrally symmetric and full-dimensional, and all elements of $\cP$ are vertices of $P$. 
    Then any pair of $2$-witness faces of $P$  will each contain half the vertices.
\end{lemma}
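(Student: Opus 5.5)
We will use central symmetry to show that the two witness faces are swapped by the antipodal map, which forces them to carry equally many vertices. Translate so that $P$ is symmetric about the origin, i.e.\ $P = -P$. Because every point of $\cP$ is a vertex of $P$, the vertex set $V(P)=\cP$ (as a set) is itself invariant under $v \mapsto -v$.

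Let $F_0, F_1$ be a pair of $2$-witness faces with witness direction $a$. Then $a^\top x$ takes the value $c_0$ on $F_0$ and $c_1$ on $F_1$, with $c_0 < c_1$, and every vertex of $P$ lies in $F_0 \cup F_1$. Because the faces are disjoint, $|V(F_0)| + |V(F_1)| = |V(P)|$.

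The key step is to identify the values. The functional $a^\top x$ is odd on $P$, so its minimum over $P$ is the negative of its maximum, giving $c_0 = -c_1$. It follows that $v \in F_1$ if and only if $a^\top v = c_1$, if and only if $a^\top(-v) = -c_1 = c_0$, if and only if $-v \in F_0$. Hence the antipodal map is a bijection between the vertices of $F_1$ and those of $F_0$. Therefore $|V(F_0)| = |V(F_1)| = |V(P)|/2$.

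Full-dimensionality is used only to make sure $a \neq 0$ gives a genuine pair of proper faces. One could alternatively invoke \autoref{lem:uniqueness-of-2-valued-direction} to note that $-a$ is the witness direction for the pair $(F_1,F_0)$, but the direct argument above suffices.

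I expect the main obstacle to be bookkeeping rather than geometry: the statement concerns vertices, not multiset points. This is why the hypothesis that all elements of $\cP$ are vertices, with the configuration treated as a set, is needed. Without it, repeated or interior points, such as the doubled points in \autoref{ex:icosahedron}, could break the count. With that hypothesis the counting is exact.
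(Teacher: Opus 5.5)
Your proof is correct. It establishes the same fact as the paper, namely that the antipodal map exchanges the two witness faces, but by a more direct mechanism.

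The paper sets $F' = -F$ and argues that $F'$ is a face disjoint from $F$, because the level hyperplanes of $a^\top x$ through $F$, $F'$ and the origin are distinct. From this it deduces $F' \subseteq \overline{F}$. It then obtains the reverse inclusion from a separate fact: a proper face of a full-dimensional centrally symmetric polytope cannot contain an antipodal pair $p, -p$.

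Your observation that $a^\top x$ is odd on $P=-P$, so that $\min_P a^\top x = -\max_P a^\top x$, gives both inclusions at once. It shows $-F_1 = \{y \in P : a^\top y = -c_1\} = F_0$ exactly, as faces and not merely on vertices.

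What your route buys is that it never uses full-dimensionality. Two-valuedness alone forces $c_0 < c_1$, hence $c_0 = -c_1 < 0 < c_1$. So your remark that full-dimensionality is needed to get a genuine pair of proper faces is unnecessary, though harmless. The paper's route leans on the origin being interior and on the no-antipodal-pair fact, and that is where its full-dimensionality hypothesis enters. It is also implicitly needed there for the hyperplanes through $F$ and $F'$ to be distinct, i.e.\ $c_1 \neq 0$.
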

\begin{proof} 
    Suppose $F$ and $\overline{F}$ are a pair of $2$-witness faces of $P$, and $a$ is the $2$-witness direction. Each vertex of $P$ lies on either $F$ or $\overline{F}$ but not both. Let $F'$ be the reflection of $F$ across the origin. Then $F'$ is also a face of $P$ as $P$ is centrally symmetric. It is disjoint from $F$ as the hyperplanes normal to $a$ containing $F$, $F'$ and the origin are all distinct. This then means that $F'$ is contained in $\overline{F}$.
    Now, we cannot have both a vertex $p$ and its mirror image $-p$ in the same face of a full-dimensional centrally symmetric polytope, since the face would have dimension equal to the polytope itself. Therefore if $p \in \overline{F}$, we have $-p \in F$ and thus $p \in F'$. This shows that $\overline{F}$ is contained in $F'$. We conclude that $\overline{F} = F'$.  
    Since $F$ and $F'$ have an equal number of vertices and all vertices lie on one of them, they each contain half the vertices of $P$.   
\end{proof}

\section{Graph Families with Uniformly Weighted Designs}
\label{sec:all-strengths}

In this section we describe several families of graphs with uniformly weighted designs of all strengths. Recall that for a graph $G$ with $k$ nontrivial Laplacian eigenspaces, we can construct eigenconfigurations $\cB_t$ and eigenpolytopes $P_t$ for $1 \le t \le k-1$. Uniformly weighted designs correspond to 2-witness directions in the eigenconfiguration. We provide two types of example families. In \autoref{sec:two-level-eigenpoly}, all eigenpolytopes in Laplacian order are $2$-level and hence every minimal positively weighted design of every strength is uniformly weighted. In \autoref{sec:threshold} and \autoref{sec:hypercube-simplex-proj}, we find families in which the eigenpolytopes are not $2$-level but are still $2$-valued, and hence there are uniformly weighted designs of every strength. 

\subsection{2-level Eigenpolytopes} \label{sec:two-level-eigenpoly}

Our first example is the family of {\bf cocktail party graphs}. These graphs are the 1-skeletons of cross-polytopes, have two nontrivial eigenspaces, and their minimal designs were described by Babecki and Thomas \cite[\S 4.1]{babecki2022galeduality}. In either ordering of the eigenspaces, the resulting eigenpolytopes are $2$-level, and hence all facet normals are $2$-witnesses and all minimal designs are uniformly weighted.

Another family with the same property consists of the  graphs ${\bf C_6 + \overline{K_n}}$ for $n\ge 6$; in \autoref{ex:C6+K4bar} we saw $C_6 + \overline{K_4}$. The Laplacians of all these graphs have six eigenspaces, and a similar analysis to \autoref{ex:C6+K4bar} shows that the eigenpolytopes $P_t$ for $1 \le t \le 4$ are $2$-level. Therefore, all minimal positively weighted designs of graphs in this family, of all strengths, are uniformly weighted. 

\medskip 

\begin{example} \label{ex:tripartite}
    \textbf{$K_{1,2,n}$: Complete Tripartite Graphs with tripartition $(1,2,n)$.}
    These graphs have $n+3$ vertices, and their Laplacians have four distinct eigenvalues $0,3^{(n-1)},(n+1)^{(1)}, (n+3)^{(2)}$ and a block structure. \autoref{fig:K_12n} shows the graph and its U-matrix (whose rows form a basis of Laplacian eigenvectors). 
    \begin{figure}[!h]
        \begin{minipage}{0.44\textwidth}
            \centering
            \begin{tikzpicture}[scale=0.5]
                \tikzstyle{every node}=[circle, fill=white, draw=black, inner sep=2pt]

                \node (a1) at (-5, 0) [label=left:1]{};
                \node (b2) at (0, 2.5) [label=2]{};
                \node (b3) at (0, -2.5) [label=below:3]{};
                \foreach \i[evaluate={\j=int(7 - \i)}] in {4,5,...,10}{
                    \node (c\i) at (5, \j) [label=right:\i]{};
                }

                \foreach \i in {4,5,...,10}{
                    \draw (c\i) -- (a1);
                    \draw (c\i) -- (b2);
                    \draw (c\i) -- (b3);
                }
                \draw (b2) -- (a1) -- (b3);
            \end{tikzpicture}
        \end{minipage}
        \begin{minipage}{0.52\textwidth}
            \centering
            $U = \begin{pmatrix}
                    1 & 1 & 1 & 1 & 1 & 1 & 1 & \dots & 1 \\
                    \hline
                    0 & 0 & 0 & -1 & 1 & 0 & 0 & \dots & 0 \\
                    0 & 0 & 0 & -1 & 0 & 1 & 0 & \dots & 0 \\
                    \vdots & \vdots & \vdots & \vdots & \vdots & \vdots & \vdots & \ddots & \vdots \\
                    0 & 0 & 0 & -1 & 0 & 0 & 0 & \dots & 1 \\
                    \hline
                    0 & -1 & 1 & 0 & 0 & 0 & 0 & \dots & 0 \\
                    \hline
                    -n & 0 & 0 & 1 & 1 & 1 & 1 & \dots & 1 \\
                    -2 & 1 & 1 & 0 & 0 & 0 & 0 & \dots & 0
                \end{pmatrix}$
        \end{minipage}
        \caption{Left: the graph $K_{1,2,n}$ for $n=7$. Right: The $U$-matrix of $K_{1,2,n}$ in general.}
        \label{fig:K_12n}
    \end{figure}

Since the Laplacian of $K_{1,2,n}$ has three nontrivial eigenspaces, the only strengths of interest are $1 \le t \le 3-1=2$. 
The eigenpolytope $P_2$ is a triangle with vertices $(-n,-2)$, $(0,1)$ and $(1,0)$ repeated with multiplicities $1,2,n$. This is a $2$-level polytope and carries uniformly weighted minimal designs of size $1,2,n$, supported exactly on the three parts of $K_{1,2,n}$, respectively.
The eigenpolytope $P_1$ is a tetrahedron with vertices $(0,-n,-2), (-1,0,1),(1,0,1),(0,1,0)$ with multiplicities $1,1,1,n$. This is also  $2$-level and carries minimal designs of sizes $1$ and $n$. 
Thus all minimal designs of these graphs are uniformly weighted. 
\end{example}

\subsection{Threshold Graphs} \label{sec:threshold}
The above three examples had $2$-level eigenpolytopes at all strengths. Such examples are usually on the simpler side, since their eigenpolytopes are heavily constrained. A much richer class is comprised of graphs whose eigenpolytopes are not necessarily $2$-level, but they are $2$-valued in each strength. \emph{Threshold graphs} are a big family of such graphs.

\begin{definition}
    A \textbf{threshold graph} has vertices $1, \dots, n$ such that vertex $i$ is either connected to all vertices before it (call these \textbf{dominating vertices}), or to none of the vertices before it (call these \textbf{isolated vertices}).
\end{definition}

Threshold graphs were introduced by Chv\'{a}tal and Hammer \cite{chvatal1977threshold}, as graphs with a linear threshold function separating independent sets from non-independent sets. They have many equivalent characterizations; see the book by Mahadev and Peled \cite{mahadevpeled1995threshold} for details.

\begin{example}
    We construct an explicit threshold graph $\cT(1,1,2,3,1)$ with 8 vertices in \autoref{fig:threshold-graph-picture}, which we will use as a running example throughout this section. In this example graph we add 2 as an isolated vertex, 3 and 4 as dominating vertices, 5, 6, and 7 as isolated vertices and finally 8 as a dominating vertex. 
\end{example}
\begin{figure}[!h]
    \centering
    \begin{tikzpicture}[xscale=2]
        \tikzstyle{every node}=[circle, fill=white, draw=black, inner sep=2pt]

        \foreach \i in {1,...,8}{
            \coordinate (\i) at (\i, 0);
        }
        
        \tikzstyle{bgbox}=[fill=gray!20, draw=gray!40, line width=.55pt, rounded corners]

        \path[bgbox] (1) ++(0, 0.2) -- ++(0.3, 0) -- ++(0, -0.4) -- ++(-0.6, 0) -- ++(0, 0.4) -- ++(0.3, 0);
        \path[bgbox] (2) ++(0, 0.2) -- ++(0.3, 0) -- ++(0, -0.4) -- ++(-0.6, 0) -- ++(0, 0.4) -- ++(0.3, 0);
        \path[bgbox] (3) ++(0, 0.2) -- ++(1.3, 0) -- ++(0, -0.4) -- ++(-1.6, 0) -- ++(0, 0.4) -- ++(0.3, 0);
        \path[bgbox] (5) ++(0, 0.2) -- ++(2.3, 0) -- ++(0, -0.4) -- ++(-2.6, 0) -- ++(0, 0.4) -- ++(0.3, 0);
        \path[bgbox] (8) ++(0, 0.2) -- ++(0.3, 0) -- ++(0, -0.4) -- ++(-0.6, 0) -- ++(0, 0.4) -- ++(0.3, 0);

        \foreach \i in {1,...,7}{
            \draw (8) to[bend left=30] (\i);
        }
        \draw (3) to[bend right=75] (2);
        \draw (3) to[bend right=75] (1);
        \draw (4) to[bend right=75] (3);
        \draw (4) to[bend right=75] (2);
        \draw (4) to[bend right=75] (1);

        \foreach \i in {1,...,7}{
            \node (\i) at (\i) [label=left:\i]{};
        }
        \node (8) at (8) [label=right:8]{};
    \end{tikzpicture}
    \caption{Threshold graph $\cT(1,1,2,3,1)$. Boxes mark intervals of vertices of same type.}
    \label{fig:threshold-graph-picture}
\end{figure}

All connected threshold graphs on $n$ vertices share a basis of integral eigenvectors \cite{borg2026thresholdeigenvectors, macharete2024thresholdeigenbasis}. We will use this result to explicitly compute the graphical designs of threshold graphs, and show that the resulting eigenpolytopes are not $2$-level but the eigenconfigurations are $2$-valued at every strength.

For each $i=2, \dots, n$, define the vector $\mathbf{x}^i$ as
\[
    \mathbf{x}^i_\ell = \begin{cases}
        1       & \text{ if } \ell < i, \\
        - (i-1) & \text{ if } \ell = i, \\
        0       & \text{ if } \ell > i.
    \end{cases}
\]
We also define $\mathbf{x}^1$ to be the all-ones vector $\ones_n$. Note that the $\mathbf{x}^i$ are pairwise orthogonal. In fact, they form an orthogonal basis of eigenvectors for threshold graphs with $n$ vertices.
\begin{lemma}[Theorem 3.4 and Corollary 3.5 of \cite{borg2026thresholdeigenvectors}] \label{lem:threshold-eigenbasis}
    Suppose $G$ is a connected threshold graph, and let $\rho_i$ be the degree of vertex $i$. Then $\mathbf{x}^i$ is an eigenvector of the Laplacian of $G$. The eigenvalue of $\bx^i$ is $0$ if $i=1$, $\rho_i$ if vertex $i$ is isolated, and $\rho_i + 1$ if vertex $i$ is dominating.
\end{lemma}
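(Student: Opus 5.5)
The plan is a direct entrywise computation of $L\bx^i$, exploiting the nested neighbourhood structure of threshold graphs. For $i=1$ there is nothing to prove, since $\bx^1=\ones_n$ and $L\ones_n=0$. Fix $2\le i\le n$ and write $(L\bx^i)_\ell=\deg(\ell)\,\bx^i_\ell-\sum_{m\sim \ell}\bx^i_m$. The one structural fact I will use repeatedly is the following. A vertex $j>i$ is either dominating, and hence adjacent to every vertex in $\{1,\dots,i\}$, or isolated, and hence adjacent to none of them. Consequently the number of neighbours of any $\ell\le i$ among the vertices $j>i$ equals $D_i:=\#\{j>i : j \text{ dominating}\}$. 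Also, $\rho_i=(i-1)+D_i$ if $i$ is dominating, and $\rho_i=D_i$ if $i$ is isolated.

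I will split into three cases for the index $\ell$.

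\emph{Case $\ell>i$.} Here $\bx^i_\ell=0$. The neighbours of $\ell$ inside the support $\{1,\dots,i\}$ are either all of it or none of it, and $\sum_{m\le i}\bx^i_m=(i-1)-(i-1)=0$. So $(L\bx^i)_\ell=0$.

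\emph{Case $\ell<i$.} Decompose $\deg(\ell)$ into neighbours in $\{1,\dots,i-1\}$, the indicator $[\ell\sim i]$, and the $D_i$ neighbours beyond $i$. The neighbours in $\{1,\dots,i-1\}$ cancel against their entries $1$, and vertices beyond $i$ have entry $0$. This leaves
\[
(L\bx^i)_\ell=[\ell\sim i]\,(1+(i-1))+D_i .
\]
If $i$ is dominating, then $\ell\sim i$ and this equals $i+D_i=\rho_i+1$. If $i$ is isolated, then $\ell\not\sim i$ and it equals $D_i=\rho_i$.

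\emph{Case $\ell=i$.} We get $(L\bx^i)_i=-(i-1)\rho_i-\#\{\text{neighbours of } i \text{ in } \{1,\dots,i-1\}\}$. If $i$ is dominating, this is $-(i-1)(\rho_i+1)$. If $i$ is isolated, it is $-(i-1)\rho_i$. In both situations this matches $\lambda\,\bx^i_i$ for the claimed $\lambda$.

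Combining the three cases gives $L\bx^i=\lambda\bx^i$, with $\lambda=\rho_i+1$ for dominating $i$ and $\lambda=\rho_i$ for isolated $i$. The only step requiring care is the bookkeeping in the case $\ell<i$. There I must make sure that the vertices beyond $i$ contribute exactly $D_i$ to $\deg(\ell)$ and nothing to the neighbour sum, which is precisely the threshold property. Connectedness is not needed for the computation; it only guarantees that $\lambda=0$ occurs solely for $\ones$.
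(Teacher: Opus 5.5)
Your entrywise computation is correct and complete. Everything rests on one fact: for $\ell<j$ the edge $\{\ell,j\}$ is present exactly when $j$ is dominating. That fact gives both the count $D_i$ of neighbours beyond $i$ and the degree formula ($\rho_i=(i-1)+D_i$ or $\rho_i=D_i$). With it, the three cases $\ell>i$, $\ell<i$, $\ell=i$ all check out; on $\cT(1,1,2,3,1)$ they reproduce the eigenvalues $3,5,5,1,1,1,8$ of $\bx^2,\dots,\bx^8$ listed in \autoref{ex:threshold-U-matrix}.

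The paper itself gives no argument for this lemma; it imports the statement from Theorem~3.4 and Corollary~3.5 of \cite{borg2026thresholdeigenvectors}. Your route therefore differs only in being self-contained. The citation keeps the exposition short and ties it to the existing literature on integral threshold eigenbases. Your short verification makes \autoref{sec:threshold} independent of that reference. It also makes explicit, as you note, that connectedness plays no role in the eigen-equation $L\bx^i=\lambda\bx^i$. Connectedness matters only afterwards, for instance in \autoref{lem:threshold-ordering}, where vertex $n$ being dominating forces $D_i\ge 1$ for $i<n$ and hence makes the eigenvalues of $\bx^2,\dots,\bx^n$ positive.
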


If the vertices $i$ and $i+1$ are both dominating or both isolated, they have the same degree in the graph. In fact they are indistinguishable; there is a graph automorphism that swaps the two vertices and keeps everything else the same. 
Based on this, we can split $[n]$ into $k+1$ intervals $I_0, \dots, I_k$, such that each interval is comprised of vertices of the same type, consecutive intervals have vertices of different types, and vertex 1 is in an interval by itself (since it is trivially both isolated and dominating). Since all vertices in an interval $I_j$ share degree and type, the corresponding eigenvectors share an eigenvalue. In fact the subspaces $\Lambda_j = \textup{span}\paren{\set{\mathbf{x}^i: i \in I_j}}$ are precisely the eigenspaces of the Laplacian of the threshold graph.

\begin{example} \label{ex:threshold-U-matrix}
    For $n=8$, the vectors $\bx^i$ are written down on the left side of \autoref{eq:threshold-eigenvectors}. 

    \begin{align} \label{eq:threshold-eigenvectors}
        \begin{pmatrix}
            \bx^1 \\ \bx^2 \\ \bx^3 \\ \bx^4 \\ \bx^5 \\ \bx^6 \\ \bx^7 \\ \bx^8
        \end{pmatrix} = \begin{pmatrix} 
            1 & 1 & 1 & 1 & 1 & 1 & 1 & 1 \\
            \hline
            1 & -1 & 0 & 0 & 0 & 0 & 0 & 0 \\
            \hline
            1 & 1 & -2 & 0 & 0 & 0 & 0 & 0 \\
            1 & 1 & 1 & -3 & 0 & 0 & 0 & 0 \\
            \hline
            1 & 1 & 1 & 1 & -4 & 0 & 0 & 0 \\
            1 & 1 & 1 & 1 & 1 & -5 & 0 & 0 \\
            1 & 1 & 1 & 1 & 1 & 1 & -6 & 0 \\
            \hline
            1 & 1 & 1 & 1 & 1 & 1 & 1 & -7 \\
        \end{pmatrix}; \;
        U = \begin{pmatrix} 
            1 & 1 & 1 & 1 & 1 & 1 & 1 & 1 \\
            \hline
            1 & 1 & 1 & 1 & -4 & 0 & 0 & 0 \\
            1 & 1 & 1 & 1 & 1 & -5 & 0 & 0 \\
            1 & 1 & 1 & 1 & 1 & 1 & -6 & 0 \\
            \hline
            1 & -1 & 0 & 0 & 0 & 0 & 0 & 0 \\
            \hline
            1 & 1 & -2 & 0 & 0 & 0 & 0 & 0 \\
            1 & 1 & 1 & -3 & 0 & 0 & 0 & 0 \\
            \hline
            1 & 1 & 1 & 1 & 1 & 1 & 1 & -7 \\
        \end{pmatrix}.    
    \end{align}

    By \autoref{lem:threshold-eigenbasis}, the vectors $\bx^i$ are eigenvectors of $\cT(1,1,2,3,1)$ (\autoref{fig:threshold-graph-picture}). The degrees of the eight vertices are 3, 3, 4, 4, 1, 1, 1, 7 respectively. The eigenvalue of $\bx^1$ is 0 (as $i=1$). When $i$ is an isolated vertex (i.e., $i=2,5,6,7$), the eigenvalue equals the degree, so $\bx^2$ has eigenvalue 3 and $\bx^5, \bx^6, \bx^7$ form an eigenspace with eigenvalue 1. Finally, if $i$ is a dominating vertex the eigenvalue equals degree plus one. Therefore $\bx^3$ and $\bx^4$ both have eigenvalue 5 and $\bx^8$ has eigenvalue 8. Reordering the eigenvectors by Laplacian eigenvalue, we obtain the $U$-matrix of the graph $\cT(1,1,2,3,1)$ (\autoref{eq:threshold-eigenvectors}, right).
\end{example}

We explicitly describe the reordering of eigenvectors for threshold graphs in the lemma below. 

\begin{lemma} \label{lem:threshold-ordering}
    Suppose a connected threshold graph has vertex set $[n]$ split into intervals $I_0, \dots, I_k$ by vertex type. Without loss of generality, assume $k$ is even. Then the Laplacian ordering of eigenspaces is 
    \[
        \Lambda_k > \Lambda_{k-2} > \dots > \Lambda_2 > \Lambda_1 > \Lambda_3 > \dots > \Lambda_{k-1} > \Lambda_0.
    \]
\end{lemma}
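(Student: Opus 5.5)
The plan is to compute the eigenvalue of each $\Lambda_j$ explicitly from \autoref{lem:threshold-eigenbasis} and then compare the resulting values directly.

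First I fix the parity of the intervals. A connected threshold graph must have vertex $n$ dominating, since otherwise $n$ would be isolated. So $I_k$ consists of dominating vertices. Consecutive intervals alternate in type, so with $k$ even (the normalization in the statement) the intervals $I_2, I_4, \dots, I_k$ are dominating and $I_1, I_3, \dots, I_{k-1}$ are isolated. Every interval $I_j$ with $j \ge 1$ is nonempty.

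Next I compute degrees. Set $N_j := |I_0| + \cdots + |I_j|$, and let $D_j$ be the number of dominating vertices in $I_{j+1} \cup \cdots \cup I_k$.
\begin{itemize}
\item An isolated vertex $i \in I_j$ is adjacent exactly to the dominating vertices after it. None of those lie in $I_j$, so $\rho_i = D_j$.
\item A dominating vertex $i \in I_j$ is adjacent to all $i-1$ earlier vertices and to all later dominating vertices. The later ones are the remaining vertices of $I_j$ together with the $D_j$ vertices beyond $I_j$. Hence $\rho_i = N_j - 1 + D_j$.
\end{itemize}
By \autoref{lem:threshold-eigenbasis}, the eigenvalue of $\Lambda_j$ is therefore $D_j$ for odd $j$ and $N_j + D_j$ for even $j \ge 2$. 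Each is constant on its interval, as expected, and $\Lambda_0$ has eigenvalue $0$.

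Then I carry out the comparisons, using only that every interval $I_j$ with $j \ge 1$ is nonempty.
\begin{itemize}
\item \emph{Odd indices.} For odd $j \le k-3$ we have $D_j - D_{j+2} = |I_{j+2}| > 0$, because $I_{j+2}$ is the only dominating interval strictly between them. This gives $\Lambda_1 > \Lambda_3 > \cdots > \Lambda_{k-1}$. Also $D_{k-1} = |I_k| > 0$, so $\Lambda_{k-1} > \Lambda_0$.
\item \emph{Even indices.} For even $j$, passing to $j+2$ increases $N$ by $|I_{j+1}| + |I_{j+2}|$ and decreases $D$ by $|I_{j+2}|$. The net change is $|I_{j+1}| > 0$, which gives $\Lambda_k > \cdots > \Lambda_2$.
\item \emph{The junction.} I need $N_2 + D_2 > D_1 = |I_2| + D_2$. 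This holds because $N_2 = 1 + |I_1| + |I_2| > |I_2|$.
\end{itemize}
Chaining these three facts yields the stated order.

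The only delicate step is the degree count for a dominating vertex, which must correctly include the later vertices of its own interval so that the eigenvalue comes out constant on $I_j$. I would double-check this count against \autoref{ex:threshold-U-matrix} before writing the proof out.
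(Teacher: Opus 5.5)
Your proof is correct and follows the same route as the paper. You fix the interval types by parity using that vertex $n$ is dominating, order same-type eigenspaces by counting the opposite-type vertices in the intervening interval, and finish with a dominating-versus-isolated comparison and $\Lambda_0$; the only difference is that you make the paper's qualitative neighborhood comparison explicit through the eigenvalue formulas $D_j$ (odd $j$) and $N_j + D_j$ (even $j$).

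There is one index slip: for odd $j$ you should have $D_j - D_{j+2} = |I_{j+1}|$, since $I_{j+1}$ is the dominating interval strictly between $I_j$ and $I_{j+2}$, while $I_{j+2}$ is isolated. This does not affect positivity.
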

\begin{proof}
    The last vertex $n$ (contained in interval $I_k$) for a threshold graph must be dominating, or else the graph would not be connected. Therefore the eigenspaces $\Lambda_k, \Lambda_{k-2}, \dots, \Lambda_2$ correspond to intervals $I_k, I_{k-2}, \ldots, I_2$ of dominating vertices, and their eigenvalues are $\rho_j + 1$ (where $\rho_j$ is the degree of vertices in $I_j$). If $I_j, I_{j'}$ are intervals of dominating vertices with $j > j'$, then there exist some intervals of isolated vertices between them. Vertices in $I_j$ are connected to these isolated vertices (since they were added later), while vertices in $I_{j'}$ are not. Moreover, this is the only difference in adjacencies between vertices in $I_j$ and $I_{j'}$. Therefore vertices in $I_j$ have a higher degree than those in $I_{j'}$, which means in Laplacian order $\Lambda_j > \Lambda_{j'}$. Ordering all the eigenspaces for dominating vertices, we have $\Lambda_k > \Lambda_{k-2} > \dots > \Lambda_2$.

    Since vertex $n$ is dominating, the eigenspaces $\Lambda_1, \Lambda_3, \dots, \Lambda_{k-1}$ correspond to intervals $I_1, I_3, \dots, I_{k-1}$ of isolated vertices, and their eigenvalues are $\rho_j$. If we have intervals $I_j$ and $I_{j'}$ of isolated vertices with $j > j'$, the first is not connected to the dominating vertices in intervals between $j$ and $j'$ while the second is. 
    By a similar argument to the one above for dominating vertices, we conclude that vertices in $I_j$ have lower degree than those in $I_{j'}$ and consequently $\Lambda_j < \Lambda_{j'}$ in Laplacian order. Ordering all the eigenspaces for isolated vertices, we have $\Lambda_1 > \Lambda_3 > \dots > \Lambda_{k-1}$.
    
    Note that a dominating vertex can never have lower degree than an isolated vertex. Moreover, the eigenvalue of the corresponding eigenvector is $\rho_j + 1$ for a dominating vertex but $\rho_j$ for an isolated vertex. Therefore all the eigenspaces corresponding to intervals of dominating vertices  will be higher in Laplacian order than eigenspaces corresponding to intervals of isolated vertices.
    Finally $\Lambda_0$ will come last as it has eigenvalue 0, which is below the degree of all vertices in a connected graph. Thus the ordering of the eigenspaces is $\Lambda_k > \Lambda_{k-2} > \dots > \Lambda_2 > \Lambda_1 > \Lambda_3 > \dots > \Lambda_{k-1} > \Lambda_0.$
\end{proof}

\autoref{lem:threshold-ordering} for odd $k$ has an identical proof, but the resulting Laplacian ordering is $\Lambda_k > \Lambda_{k-2} > \dots > \Lambda_1 > \Lambda_2 > \Lambda_4 > \dots > \Lambda_{k-1} > \Lambda_0.$ We shall now use the lemma to characterize the uniformly weighted designs of the threshold graph. We show uniformly weighted designs exist at every strength, but not every facet of the eigenpolytope produces designs of uniform weight.
\begin{theorem} \label{thm:threshold-singleton-design}
    The vertex $n$ of a connected threshold graph is a singleton uniformly weighted design for all strengths. In other words, the eigenconfigurations $\cB_t$ are 2-valued for all strengths.
\end{theorem}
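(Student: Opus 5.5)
We will verify directly that $W=\{n\}$ averages every nontrivial eigenspace except the last one in Laplacian order. Since uniform weight reduces to checking $\sum_{w\in W}\varphi(w)=0$, this means showing $\varphi(n)=0$ for every $\varphi$ in $\Lambda_1\oplus\cdots\oplus\Lambda_t$ (Laplacian indexing) whenever $t\le k-1$. Equivalently, by \autoref{thm:unif-wted-two-valued}, we want a coordinate of the eigenconfiguration $\cB_t$ that takes exactly two values.

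First we use the explicit eigenbasis of \autoref{lem:threshold-eigenbasis}. By definition, $\bx^i_n=0$ for every $i<n$, since $n>i$ and all such entries are zero. Only $\bx^1=\ones$ and $\bx^n$ are nonzero at vertex $n$, with $\bx^n_n=-(n-1)$. So the singleton $\{n\}$ is orthogonal to the whole of $\Lambda_j=\mathrm{span}\{\bx^i : i\in I_j\}$ for every interval index $j<k$. Here $I_k$ is the last interval, which contains $n$.

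Next we invoke \autoref{lem:threshold-ordering} and its odd-$k$ analogue. These say that the eigenspace spanned by $\{\bx^i : i\in I_k\}$ is the top eigenspace in Laplacian order in both parities, because $n$ is dominating and has the largest degree $n-1$. Hence, for any strength $1\le t\le k-1$, the eigenspaces that must be averaged all come from intervals $I_j$ with $j<k$. By the previous paragraph, each of their basis vectors vanishes at $n$, so $\{n\}$ with weight $1/n$ is a design of strength $t$.

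To check the weight: for $\varphi$ orthogonal to $\ones$, the left side of \autoref{eq:global-average-equals-wted-sum} is $0$, and so is $a\,\varphi(n)$. Only positivity of the weight matters here, so any $a>0$ works, and $a=1/n$ is a valid choice.

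For the geometric restatement, the block $B$ always contains the rows $\bx^i$ for $i\in I_k$. We take the row $\bx^n=(1,\dots,1,-(n-1))$. It takes exactly two values on $\cB_t$: the value $1$ on $b_1,\dots,b_{n-1}$ and the value $-(n-1)$ on $b_n$. So the coordinate functional for $\bx^n$ is a $2$-witness, and $\cB_t$ is $2$-valued, as \autoref{thm:unif-wted-two-valued} requires.

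The only delicate point is confirming that $I_k$ really gives the largest eigenvalue. It must not, for example, tie with another interval, which would merge eigenspaces. We handle this with \autoref{lem:threshold-ordering}: the dominating intervals have strictly increasing degree with index, and a dominating vertex's eigenvalue $\rho+1$ exceeds that of any isolated vertex. Vertex $n$ has degree $n-1$, so its eigenvalue is $n$, which no other interval attains.
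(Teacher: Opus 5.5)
Your proof is correct and is essentially the paper's argument. The paper's proof is exactly your geometric paragraph: the $\bx^n$-coordinate of $\cB_t$ takes only the values $1$ and $-(n-1)$, and complementing the face $\{b_n\}$ gives $W=\{n\}$. Your direct check that $\bx^i_n=0$ for $2\le i<n$ is the same fact seen on the design side, since $e_n=(\ones-\bx^n)/n$.

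One small correction. \autoref{def:graphical-designs} also requires averaging the constant function in $\Lambda_0$, and this forces the weight on a singleton to be $1$, not $1/n$. So ``any $a>0$ works'' holds only for the nontrivial eigenspaces. For the same reason, your orthogonality claim should read $1\le j<k$ rather than $j<k$.
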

\begin{proof}
    At all strengths $1 \le t \le k-1$, the eigenconfiguration $\cB_t$ has a coordinate coming from the last eigenvector $\mathbf{x}^n$ (since $n \in I_k$). This vector has exactly two values: 1 for all vertices $i<n$ and $-(n-1)$ at the vertex $n$. Therefore, it creates a $2$-witness direction for $\cB_t$. One $2$-witness face of this direction is just the vertex $b_n \in P_t$, and the other is the convex hull of all other vertices of $P_t$. This other face will contain all the other elements of $\cB_t$. Taking complements, this shows that the vertex $n$ of the graph is a uniformly weighted design by itself for all strengths.
\end{proof}
\begin{theorem} \label{thm:threshold-not-2-level}
    Let $G$ be a connected threshold graph, with vertex set $[n]$ split into intervals $I_0, \dots, I_k$ by vertex type, and $k>2$. For strengths $1 \le t \le k-2$, the eigenpolytopes $P_t$ are simplices, but the eigenconfigurations $\cB_t$ have points in the interior of certain faces. Therefore any uniformly weighted design of strength $t$ contains all or none of the points in these faces. In particular, there are facet normals of $P_t$ that are not $2$-witness directions, and thus minimal positively weighted designs that are not uniformly weighted.
\end{theorem}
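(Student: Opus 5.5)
The plan is to write the eigenconfiguration $\cB_t$ down explicitly from the eigenbasis $\bx^2,\dots,\bx^n$ of \autoref{lem:threshold-eigenbasis}, with eigenspaces ordered as in \autoref{lem:threshold-ordering}. For a strength $1\le t\le k-2$, let $S\subset\{2,\dots,n\}$ be the set of indices $i$ whose $\bx^i$ are \emph{not} averaged. By \autoref{lem:threshold-ordering}, $S$ is a union of at least two intervals $I_j$ and always contains $I_k\ni n$. The rows of $B$ are then $\{\bx^i : i\in S\}$, so the column $b_\ell\in\RR^{S}$ has coordinates
\[ (b_\ell)_i = 1 \text{ if } i>\ell,\qquad (b_\ell)_\ell=-(\ell-1) \text{ if } \ell\in S,\qquad (b_\ell)_i = 0 \text{ if } i<\ell. \]

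The first step is to sort the columns into two kinds. If $\ell\in S$, then $b_\ell$ is distinct from every other column, because it is the only one with the entry $-(\ell-1)$ in coordinate $\ell$. If $\ell\notin S$ (this includes $\ell=1$), then $b_\ell$ is the $0/1$ indicator vector of $\{i\in S: i>\ell\}$. So $b_\ell$ depends only on which maximal block (``gap'') of $[n]\setminus S$ contains $\ell$, and all columns in the same gap coincide.

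The second step is to find the vertices and show $P_t$ is a simplex. I expect the vertices to be the $\beta=|S|$ points $b_s$ ($s\in S$) together with the single point $b_1=\ones_S$ coming from the lowest gap. These $\beta+1$ points are affinely independent: after subtracting $b_1$, the matrix is triangular with nonzero diagonal entries $-\ell$. For any other gap point $g$, let $s_1<s_2<\dots$ be the elements of $S$ that lie below the gap. The vector $g$ is obtained from $\ones_S$ by zeroing out those coordinates. I would then solve explicitly for the barycentric coordinates of $g$ with respect to $b_1$ and $b_{s_1},b_{s_2},\dots$. The triangular structure should make these coordinates strictly positive, which places $g$ in the relative interior of the face $F=\conv\{b_1,b_{s_1},b_{s_2},\dots\}$. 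Since $g$ is then a convex combination of the proposed vertices, the simplex claim follows. Because $S$ has at least two intervals and the ordering of \autoref{lem:threshold-ordering} interleaves averaged and non-averaged intervals, there is at least one such interior gap point.

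The third step is the consequences. Take a $2$-valued functional $f$ with $f(\cB_t)\subseteq\{0,1\}$, and suppose $f$ is not constant on $F\cap\cB_t$. Then $f(g)=\sum_{v\in T}\lambda_v$, where $T$ is the set of vertices of $F$ with $f(v)=1$ and $T$ is a nonempty proper subset. I will show that no such partial sum of the barycentric weights equals $0$ or $1$. This uses the explicit values of the $\lambda_v$, which I expect to be fractions like $1/s$ that make partial sums nonintegral. It forces $f$ to be constant on $F\cap\cB_t$, so a uniformly weighted design contains all or none of those points. For the last claim, take a facet of the simplex $P_t$ that omits exactly one vertex $v$ of $F$. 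Its facet normal, normalised to be $0$ on the facet and $1$ at $v$, takes the value $\lambda_v\in(0,1)$ at $g$. So this normal is not a $2$-witness, and \autoref{thm:gale-duality-bijection} gives a minimal positively weighted design that is not uniformly weighted.

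I expect the main obstacle to be the explicit barycentric computation in the second and third steps. One part is showing positivity of the coordinates. The harder part is ruling out subset sums equal to $1$. If the weights turn out not to prevent this in general, I would weaken the ``all or none'' conclusion to the facet-normal statement, which only needs $0<\lambda_v<1$.
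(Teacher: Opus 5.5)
Your proposal is correct and follows the paper's proof: the same classification of columns, the same simplex on $b_1$ and the $b_s$ ($s\in S$), gap points in the relative interiors of faces, and the same facet-normal conclusion; the one difference is that the paper gets the positive coefficients from the identity $b_i=\frac{1}{i-1}\sum_{i'<i}b_{i'}$ rather than by solving your triangular system. The step you flag as the main obstacle is automatic: once every barycentric coordinate $\lambda_v$ is strictly positive and they sum to $1$, any partial sum over a nonempty proper subset $T$ lies strictly in $(0,1)$, so you need no explicit values, no nonintegrality argument and no fallback --- this is exactly the paper's observation that the value at the interior point lies strictly between the two values taken on the face.
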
 
\begin{proof}
    Let $\beta$ be the dimension of $\cB_t$. The points in $\cB_t$ fall into three cases:
        
    \begin{enumerate}
        \item \textbf{$b_i = \ones_\beta$.}  This occurs for $i$ such that all the rows $\bx^\ell$ used to produce $\cB_t$ have $i < \ell$ (which implies $\bx^\ell_i = 1$). All points of this type coincide in the eigenconfiguration. 

        \item \textbf{$b_i = (0, \dots, 0, -(i-1), 1, \dots, 1)$.} This occurs when $\bx^i$ is one of the rows used to produce $\cB_t$. The $-(i-1)$ in the point occurs at the coordinate corresponding to $\bx^i$. There are $\beta$ such points, and they are all distinct in the eigenconfiguration.
        \item \textbf{$b_i = (0, \dots, 0, 1, \dots, 1).$} This occurs when $\bx^i$ is not one of the rows used to produce $\cB_t$, but $i$ lies in an interval $I_j$ with rows from both $I_{j-1}$ and $I_{j+1}$ used to produce $\cB_t$. The transition from 0 to 1 happens between the coordinates coming from $I_{j-1}$ and $I_{j+1}$. All points coming from an interval $I_j$ correspond to the same point in the eigenconfiguration. 
    \end{enumerate}

    Points in cases (1) and (2) exist for all strengths $t$. For example, $b_1$ is always in case (1), while $b_i$ for $i \in I_k$ is always in case (2). 
    Points in case (3) exist when $1 \le t \le k-2$. For these strengths, $\cB_t$ always has eigenvectors from $\Lambda_k$ and $\Lambda_{k-2}$ as coordinates, but never the eigenvectors from $\Lambda_{k-1}$ (since, by \autoref{lem:threshold-ordering}, they are respectively the first, second and last nontrivial eigenspaces in Laplacian order). Therefore, all vertices in $I_{k-1}$ will be in case (3) for these strengths. 

    We can check that the points in cases (1) and (2) are affinely independent. There are $\beta + 1$ such points, so they form the vertices of a simplex.
    No point in case (3) is affinely independent from these $\beta+1$ simplex vertices. We show that they lie in the relative interior of faces of the simplex, by proving an intermediate claim: a point in case (3) will be the centroid of all the points coming before it. 
    At the coordinates $\ell$ with $(b_i)_\ell = 1$, every point $i' < i$ in the eigenconfiguration $\cB_t$ also has $(b_{i'})_\ell = 1$. On the other hand, a coordinate $\ell$ with $(b_i)_\ell = 0$ has all nonzero coordinates of $\bx^\ell$ appear among the $(b_{i'})_\ell$ with $i' < i$. Since $\bx^\ell \cdot \ones = 0$, this implies that $(b_i)_\ell = 0$ is the average of the $(b_{i'})_\ell$ with $i' < i$. Therefore we have the equation
    \[
        b_i = \frac1{i-1}\sum_{i'<i} b_{i'},
    \]
    which proves our claim.     
    The convex hull of the points $b_{i'}$ with $i' < i$ is a face of the simplex $P_t = \conv(\cB_t)$, and $b_i$ is in the relative interior of this face, since being in case (3) implies that some of the $b_{i'}$ are distinct. We call such faces \emph{thorny}.

    Suppose $F$ is a thorny face of $P_t$, with $b$ in case (3) in its relative interior. Then $b$ is the centroid of the vertices of $F$. For the sake of contradiction, suppose there is a 2-witness direction $a$ for $\cB_t$ such that $a^\top p_1 \ne a^\top p_2$ for vertices $p_1, p_2$ of $F$. We must have $a^\top b$ strictly between the values $a^\top p_1 \ne a^\top p_2$. This implies that the function $a^\top \bx$ takes on at least three distinct values among points in $\cB_t$, which shows that it cannot be a 2-witness direction! Thus all 2-witnesses take on the same value over all points in a thorny face, and any uniformly weighted design contains all or none of these points. 

    The point $b_1$ is contained in all thorny faces, since $1 < i$ for all $b_i$ in case (3). However, $b_1 = \ones_\beta$ is a vertex of the simplex $P_t$. We have a facet of the simplex that is the convex hull of all the other vertices. This facet contains some vertices of thorny faces but not $b_1$. Therefore we have a facet normal that is not a 2-witness direction, which produces a minimal positively weighted design that is not uniformly weighted.
\end{proof}

\begin{example}
    Let us look at the eigenconfiguration $\cB_2$ for our example threshold graph $\cT(1,1,2,3,1)$. This graph has four nontrivial eigenspaces, and we use eigenvectors from the last two to construct $\cB_2$. This corresponds to the last three rows of the $U$-matrix from \autoref{ex:threshold-U-matrix}, which we label $B$ (\autoref{fig:threshold-eigenpolytope}, left). The points $b_i \in \cB_t$ are given by the $i$-th column of $B$.

    \begin{figure}[!h]
        \centering
        \begin{minipage}{0.48\textwidth}
            \centering
            \[B = \begin{pmatrix} 
                1 & 1 & -2 & 0 & 0 & 0 & 0 & 0 \\
                1 & 1 & 1 & -3 & 0 & 0 & 0 & 0 \\
                1 & 1 & 1 & 1 & 1 & 1 & 1 & -7 \\
            \end{pmatrix}\]
        \end{minipage}
        \begin{minipage}{0.48\textwidth}
            \centering
            \begin{tikzpicture}[scale=0.5]
                \tikzstyle{every node}=[circle, fill=black, draw=black, inner sep=1pt]
                \tikzstyle{labelnode}=[rectangle, draw=none, fill=none, inner sep=0pt]
    
                \node (12) at (1, 1, 1) {};
                \node (3) at (1, 1, -2) [label=$b_3$]{};
                \node (4) at (1, -3, 0) [label=below:$b_4$]{};
                \node (567) at (1, 0, 0) {};
                \node (8) at (-7, 0, 0) [label=$b_8$]{};
    
                \node[labelnode, above left=-0.05em of 12] {$b_1, b_2$}; 
                \node[labelnode, right=0.7em of 567] {$b_5, b_6, b_7$};
    
                \draw (12) -- (4) -- (3) -- (8);
                \draw (4) -- (8) -- (12) -- (3);
            \end{tikzpicture}
        \end{minipage}
        \caption{Eigenconfiguration $\cB_2$ for $\cT(1,1,2,3,1)$ is formed by columns of $B$ (left). Its convex hull is a tetrahedron, with points in the interior of a triangular face (right).}
        \label{fig:threshold-eigenpolytope}
    \end{figure}
    
    Drawing the eigenconfiguration (\autoref{fig:threshold-eigenpolytope}, right), we see that its convex hull is a tetrahedron with a vertex repeated twice ($b_1$, $b_2$). There is a point in the interior of a triangular face, repeated thrice ($b_5$, $b_6$, $b_7$). We can check that $b_8$ by itself is a 2-witness for this eigenconfiguration, as predicted by \autoref{thm:threshold-singleton-design}. 
    
    However, not all facet directions of the tetrahedron are 2-witnesses. For example, the facet formed by vertices $b_3, b_4$ and $b_8$ (the points in case (2) in the proof of \autoref{thm:threshold-not-2-level}) is given by the equation $16x_1 + 8x_2 + 3x_3 = -21$. The corresponding linear functional takes on the value 3 at $b_5, b_6, b_7$ (the points in case (3)) and the value 27 at $b_1, b_2$ (the points in case (1)). Therefore this facet normal direction takes on three values over $\cB_2$, and is not a 2-witness, as predicted by \autoref{thm:threshold-not-2-level}.
\end{example}

\subsection{Hypercube Graphs and Simplex Projections} \label{sec:hypercube-simplex-proj}

We have previously encountered the hypercube graph in \autoref{ex:hypercube-half-design} and \autoref{ex:cube-smallsize-design}. In this section we show that in all strengths, the hypercube graph has uniformly weighted designs of size {\em much smaller} than the dimension bound. While showing this, we provide a new geometric proof of a classical duality result. This opens up several new questions that are explored at the end of this subsection.

A \textbf{linear code} with \textbf{block length} $n$ and \textbf{minimal distance} $t+1$ is a subspace $S \subset \FF_2^n$ such that all nonzero elements $x \in S$ satisfy $|x| > t$. Then by linearity, for all pairs $x, y \in S$ we have $|x-y| > t$, or the minimal distance between points in the code is $t+1$. The dimension of the code is the dimension of the subspace; if $S$ is a linear code with dimension $d$, it contains $2^d$ elements. The \textbf{dual code} of a linear code is its orthogonal complement in $\FF_2^n$. The dual code of a dimension $d$ code has size $2^{n-d}$. It is a folklore result that the dual of a linear code with minimum distance $t+1$ is an orthogonal array with strength $t$ \cite{hedayat1999orthogonal}. 

It was shown in \cite{chowdhury2025combstructures} that graphical designs of the hypercube in Laplacian order correspond to binary orthogonal arrays. \autoref{ex:cube-smallsize-design} constructed such designs for strength $t \le \floor{2n/3}$ of size $2^{n-2}$, by considering projections to a tetrahedron. We generalize this construction by projecting to a higher-dimensional simplex, which provides a new geometric proof of the duality between linear codes and linear orthogonal arrays.

\begin{proposition} \label{prop:geom-proof-duality} 
    Suppose $S$ is a linear code of block length $n$, dimension $d$ and minimal distance $t+1$. The projection $\pi$ of $\cB_t$ onto the coordinates $y \in S, y \ne 0$ produces a $(2^d-1)$-simplex. This produces a strength $t$ design of the hypercube graph with $2^{n-d}$ points, which corresponds to the dual code of $S$.
\end{proposition}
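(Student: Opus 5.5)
The plan is to generalize \autoref{ex:cube-smallsize-design}, replacing the three characters $\chi_{y_1},\chi_{y_2},\chi_{y_3}$ by the full set $\{\chi_y : y \in S\setminus\{0\}\}$. Since every nonzero $y\in S$ has $|y|>t$, each such $\chi_y$ spans part of an eigenspace $\Lambda_{|y|}$ with $|y|\ge t+1$. I may therefore take these characters as rows of $B$, using the freedom to choose eigenbases noted after \autoref{cor:minimal-unif-designs}. The projection $\pi$ onto these $2^d-1$ coordinates is then a coordinate projection of $\cB_t$, and it sends vertex $x\in\FF_2^n$ to $\pi(b_x) = (\chi_y(x))_{y\in S\setminus 0}$.

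The key step is to identify the image $\pi(\cB_t)$. First, $\pi(b_x)$ depends only on the linear functional $y\mapsto y\cdot x$ restricted to $S$, that is, on the coset $x+S^\perp$. The map $\FF_2^n\to S^*$ is surjective, so there are exactly $2^d$ distinct image points, one per element of $\FF_2^n/S^\perp$. Each image point is hit by exactly $|S^\perp| = 2^{n-d}$ vertices.

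Next I show these $2^d$ points are affinely independent. Append the coordinate $y=0$, where $\chi_0\equiv 1$. The $2^d\times 2^d$ matrix $(\chi_y(x))_{y\in S,\ x\in \FF_2^n/S^\perp}$ is then the character table of the group $S\cong\FF_2^d$. Concretely, after choosing coset representatives identified with $S^*$, it is a Sylvester Hadamard matrix. By orthogonality of characters it is invertible. Invertibility with a constant row of ones is exactly affine independence of the projected points, so $\conv\pi(\cB_t)$ is a $(2^d-1)$-simplex. I expect this identification, with the bookkeeping of cosets, to be the main point needing care; everything else is formal.

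Finally, a simplex is $2$-level. For any vertex $v$ of the simplex, the facet opposite $v$ together with $v$ itself gives a $2$-witness, taking one value at $v$ and another on all other vertices. By \autoref{lem:2-valued-projections}, this lifts to a $2$-witness of $\cB_t$, which separates exactly the fibre $\pi^{-1}(v)$ from the rest. By \autoref{thm:unif-wted-two-valued}, that fibre (or its complement) is a uniformly weighted design of strength $t$. Choosing $v=\pi(b_0)$, the image of the coset of $0$, the fibre is precisely $S^\perp$, the dual code, of size $2^{n-d}$.

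As a sanity check, this agrees with the folklore fact that $S^\perp$ is an orthogonal array of strength $t$. Directly, $\sum_{x\in S^\perp}\chi_z(x)=0$ for all $z\notin S$, and in particular for all $0<|z|\le t$.
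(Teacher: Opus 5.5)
Your proposal is correct and follows the paper's proof essentially step by step: you project onto the characters $\chi_y$ for $y \in S\setminus\{0\}$, get $2^d$ image points each with a fibre of size $2^{n-d}$ (a coset of $S^\perp$), use that the simplex is $2$-level, lift the $2$-witness via \autoref{lem:2-valued-projections}, and take the fibre over the image of $0$ to obtain $S^\perp$. Your character-orthogonality argument for the affine independence of the $2^d$ image points supplies a detail the paper only asserts when it calls the image a $(2^d-1)$-dimensional simplex.
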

\begin{proof}
    We first note that, since $|y| > t$ for all $0 \neq y \in S$, the eigenvectors $\chi_y$ for $0 \neq y \in S$ are not averaged by a design of strength $t$ in the hypercube graph. Therefore, it is possible to project $\cB_t$ onto all coordinates indexed by the nonzero elements in $S$. 

    Let $y_1, \dots, y_d$ be a basis of $S$. By linear independence, the $d$-tuples $(y_i \cdot x)_{i=1}^d$ take on all $2^d$ possible values as $x$ ranges over $\FF_2^n$. Moreover any $y \in S$ is a linear combination of the $y_i$, and so $\chi_y(x)$, which is based on the value of $y\cdot x$, is determined by the $d$-tuple $(y_i \cdot x)_{i=1}^d$. Therefore the $2^d$ possibilities for the $d$-tuple determine all possible images of the projection $\pi(\cB_t)$, which is a $(2^d-1)$-dimensional polytope with $2^d$ points i.e., a simplex.

    The map $x \mapsto (y \cdot x)_{y \in S, y \ne 0}$ is linear, and therefore all of its fibers have the same size. Therefore the projection $\pi: x \mapsto ((-1)^{y\cdot x})_{y \in S, y \ne 0}$ also has equally sized fibers, and $\pi$ is a $2^{n-d}$-to-1 cover from $\cB_t$ to the simplex $\pi(\cB_t)$.
    The simplex is a 2-level polytope, and hence all facet directions are 2-witnesses, corresponding to 2-witness faces containing 1 and $(2^d - 1)$ vertices. The preimages of these faces are faces of $P_t$ containing $2^{n-d}$ and $2^n - 2^{n-d}$ vertices. By \autoref{lem:2-valued-projections}, these faces are 2-witnesses of $\cB_t$. The complements of these faces are designs of $H(n, 2)$ with strength $t$ and size $2^{n-d}$ and $2^n - 2^{n-d}$ respectively. If we take the preimage of the facet not containing the origin, the corresponding design will be the dual code of $S$.
\end{proof}

The relationship between dual codes and designs was already described by Babecki \cite[Theorem 4.8]{babecki2021codes}, but not for Laplacian order. We shall apply this relationship for the specific case of the BCH code, to obtain uniformly weighted designs of the hypercube that beat the dimension bound in \autoref{ex:BCH-dual-design}. This is not novel; it has even been described (albeit in a different language) in textbooks \cite[Theorem 16.2.1]{alon2008probabilistic}.

\begin{example} \label{ex:BCH-dual-design}
    The BCH code is a particular type of linear code. For any $m \ge 3$ and $t < 2^{m-1}$, there exists a BCH code with block length $n = 2^m -1$, dimension at least $d = n - mt - 1$ and minimum distance at least $2t+1$. By \autoref{prop:geom-proof-duality}, the dual of the BCH code is a design of the graph $H(n, 2)$ averaging the first $2t$ eigenspaces, with size upper bounded by $2^{n-d} = 2^{mt+1} = O(n^t)$. 
    
    The number of eigenvectors of the $n$-dimensional hypercube averaged by a design of strength $2t$ is
    \[s_{2t} = \sum_{i=0}^{2t} \binom{n}{i} = O(n^{2t}).\]
    Therefore the dual of the BCH code has size roughly the square root of the dimension bound. Expanding this construction to all values of $n, d$ and $t$, we can show that the hypercube graph has uniformly weighted designs that meet the dimension bound at all strengths.
\end{example}

\autoref{prop:geom-proof-duality} and \autoref{ex:BCH-dual-design} are known results, but our work presents them from a new polyhedral perspective via the facial structure of eigenpolytopes. It would be interesting if this perspective can be applied beyond linear codes and linear orthogonal arrays. The following questions expand on this idea. 

\begin{question} 
    Every orthogonal array of strength $t$, being a uniformly weighted design of the hypercube graph, corresponds to a 2-witness direction in the eigenconfiguration $\cB_t$. For small orthogonal arrays, this means the eigenpolytope has 2-witness faces that contain a vast majority of the vertices. For example, a $4\ell \times 4\ell$ Hadamard matrix corresponds to a strength 2 orthogonal array in the $(4\ell - 1)$-cube. By the methods of this paper, the eigenconfiguration $\cB_2$ for $n = 4\ell - 1$ therefore has a face that contains $2^n - n - 1$ of the $2^n$ points, and moreover this face is a 2-witness. Could we employ polyhedral methods to find such faces, and produce novel constructions for Hadamard matrices or other nonlinear orthogonal arrays?
\end{question}

\begin{question}
    Just as the designs of the hypercube graph in Laplacian order are orthogonal arrays, the designs of the Johnson graph are combinatorial block designs. Could one find $2$-witness faces in the eigenpolytopes of the Johnson graph that meet the dimension bound? This would produce a new, geometric proof of the existence of combinatorial block designs \cite{keevash2024existencedesigns, kuperberg2017combstructures}.
\end{question}

\begin{question}
    Can the idea of ``duality'' be generalized beyond the construction in \autoref{prop:geom-proof-duality} coming from coordinate projections to a simplex? Is there a notion of duality for a nonlinear code, or for a combinatorial block design?
\end{question}

\section{Graph Families without Uniformly Weighted Designs} \label{sec:algebraic-methods-no-designs}

In this last section we construct families of graphs for which there are no uniformly weighted designs of any strength. 
We elaborate on an idea first presented in \cite[Lemma 6.6]{chowdhury2025combstructures}. 

\begin{theorem}[Rational Weights Theorem] \label{thm:rational-wts-thm}
    Suppose $\alpha \notin \QQ$ is an eigenvalue of the Laplacian of a graph $G$, with eigenspace $\Lambda$ of dimension $m$.
    Let $P(x)$ be the minimal polynomial of $\alpha$ over $\QQ$; the other roots of $P(x)$ are called \emph{conjugates} of $\alpha$. 
    If $\alpha'$ is a conjugate of $\alpha$, it will also be an eigenvalue of the Laplacian of $G$, with eigenspace $\Lambda'$ having dimension $m =\dim(\Lambda)$, and any design $W$ with rational weights averaging $\Lambda$ will also average $\Lambda'$.
\end{theorem}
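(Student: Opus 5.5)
The plan is to use Galois theory on the rational (integer) Laplacian $L$. Let $K$ be the splitting field of $P(x)$ over $\QQ$, and let $\sigma \in \mathrm{Gal}(K/\QQ)$ be an automorphism with $\sigma(\alpha) = \alpha'$. Such a $\sigma$ exists because $\alpha$ and $\alpha'$ are roots of the same irreducible polynomial. Apply $\sigma$ entrywise to vectors in $K^n$. Since $L$ has integer entries, $\sigma(Lv) = L\sigma(v)$.

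First I handle the eigenvalue and dimension claims. The characteristic polynomial of $L$ has rational coefficients, so $P(x)$ divides it, and $\alpha'$ is an eigenvalue of $L$. For the dimension, I compute the eigenspace over $K$. The kernel of $L-\alpha I$ over $K$ has a basis $v_1,\dots,v_m \in K^n$; this is possible because Gaussian elimination on a matrix with entries in $\QQ(\alpha)$ stays inside $\QQ(\alpha)\subseteq K$. The real eigenspace $\Lambda$ is the real span of these vectors, since the rank of $L-\alpha I$ does not depend on which field is used. Applying $\sigma$ gives $(L-\alpha' I)\sigma(v_j)=\sigma((L-\alpha I)v_j)=0$. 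The vectors $\sigma(v_j)$ remain linearly independent, because $\sigma$ is a field automorphism applied coordinatewise and it preserves the vanishing of minors. Hence $\dim \Lambda' \ge m$. Running the same argument with $\sigma^{-1}$ gives the reverse inequality, so $\dim\Lambda'=m$. The one point to check is that $\alpha'$ is real. This holds automatically, since $L$ is symmetric and all its eigenvalues are real, so $\Lambda'$ is the real span of the $\sigma(v_j)$.

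Next I treat averaging. Let $W$ have rational weights $a_w$ and average $\Lambda$. Because $\alpha\neq 0$ (it is irrational), $\Lambda$ is orthogonal to $\ones$, so averaging means $\sum_{w\in W} a_w v(w)=0$ for all $v\in\Lambda$. In particular this holds for each basis vector $v_j$, which lies in $K^n$. Applying $\sigma$ and using $\sigma(a_w)=a_w$ (the weights are rational), I get $\sum_{w} a_w \sigma(v_j)(w)=0$ for every $j$. The $\sigma(v_j)$ span $\Lambda'$, and $\alpha'\ne0$ so $\Lambda'\perp\ones$. Therefore $W$ averages $\Lambda'$.

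I expect the main obstacle to be the bookkeeping between real eigenspaces and eigenspaces over the algebraic field $K$. Concretely, I need to justify that $\Lambda$ has a basis with entries in $\QQ(\alpha)$, so that $\sigma$ can be applied to it, and that the real span of the conjugated basis is exactly $\Lambda'$. I will settle both points by the rank-invariance argument: rank is computed by row reduction, and row reduction stays within the field of the entries. The rest is formal.
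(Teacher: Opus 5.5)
Your proposal is correct and takes the same route as the paper. Pick $\sigma\in\mathrm{Gal}(K/\QQ)$ with $\sigma(\alpha)=\alpha'$ and apply it entrywise both to $(L-\alpha I)v=0$ and to the averaging identity, using that $L$ and the weights are rational. Your version is also more careful than the paper's: you justify taking an eigenbasis in $\QQ(\alpha)^n$, so that $\sigma$ can be applied at all, and you prove $\dim\Lambda'=m$ via preservation of minors and $\sigma^{-1}$, where the paper only asserts that ``$\sigma$ is a bijection between the eigenspaces.''
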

\begin{proof}
    Let $K$ be the splitting field of $\alpha$; this is the smallest field containing all conjugates of $\alpha$. Given a conjugate $\alpha'$ of $\alpha$, we have a field automorphism $\sigma$ of $K / \QQ$ with $\sigma(\alpha) = \alpha'$. 

    Suppose $\phi_\alpha$ is an eigenvector of the Laplacian $L$ of $G$ with eigenvalue $\alpha$. This implies $(L - \alpha I)\phi_\alpha = 0.$ We apply the automorphism $\sigma$ to this equation. Since $L, I$ have rational entries they are fixed by $\sigma$. We have
    \[\paren{\sigma(L) - \sigma(\alpha)\sigma(I)} \sigma(\phi_\alpha) = (L - \alpha'I) \sigma(\phi_\alpha) = 0.\]
    Therefore, $\sigma(\phi_\alpha)$ is an eigenvector of $L$ with eigenvalue $\alpha'$. This shows that $\sigma$ is a bijection between the eigenspaces of $\alpha$ and $\alpha'$. Now, since $W$ with weights $a_w$ averages the eigenspace for $\alpha$,
    \[ \sum_{w \in W} a_w\phi_\alpha(w) =  \frac{1}{|V|} \sum_{v \in V} \phi_\alpha(v).\]        
    Applying $\sigma$ to the equation, we get
    \[ \sum_{w \in W} a_w\sigma(\phi_\alpha)(w) = \sum_{w \in W} \sigma(a_w)\sigma(\phi_\alpha(w)) =  \frac{1}{|V|} \sum_{v \in V} \sigma(\phi_\alpha)(v),\]     
    where $\sigma(a_w) = a_w$ because we assumed the design has rational weights. 
    Therefore $W$ with weights $a_w$ also averages $\sigma(\phi_\alpha)$, and thus averages the eigenspace $\Lambda'$ for $\alpha'$.
\end{proof}

\begin{corollary} \label{cor:conjugate-designs-unif-wted}
    If $G$ is a graph with eigenspaces $\Lambda$ and $\Lambda'$ whose corresponding eigenvalues $\alpha, \alpha'$ are conjugates,
    then any uniformly weighted design $W$ of $G$ either averages both $\Lambda$ and $\Lambda'$ or neither.
\end{corollary}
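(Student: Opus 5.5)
The corollary should follow directly from the Rational Weights Theorem (\autoref{thm:rational-wts-thm}) once we check that a uniformly weighted design can always be taken to have rational weights. The plan is to reduce to weight $1$, apply the theorem in both directions, and then handle the possibility that the eigenvalues are rational.

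First, recall the reduction made at the start of \autoref{sec:weights-of-designs}. For any nontrivial eigenvector $\varphi$, a uniformly weighted $W$ with weight $a>0$ averages $\varphi$ if and only if $\sum_{w\in W}\varphi(w)=0$. This condition does not depend on $a$. So $W$ averages $\Lambda$ with weight $a$ if and only if it averages $\Lambda$ with weight $1$, and the same holds for $\Lambda'$. We may therefore assume $a=1\in\QQ$, so that $W$ has rational weights.

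Next, consider the case where $\alpha$ is irrational. Suppose $W$ averages $\Lambda$. By \autoref{thm:rational-wts-thm} applied to $\alpha$ and its conjugate $\alpha'$, the set $W$ with weight $1$ also averages $\Lambda'$. Conjugacy is symmetric: $\alpha'$ has the same minimal polynomial and is also irrational, so $\alpha$ is a conjugate of $\alpha'$. Applying the theorem with the roles swapped shows that averaging $\Lambda'$ implies averaging $\Lambda$. Hence $W$ averages both eigenspaces or neither.

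Finally, consider the case where $\alpha\in\QQ$. Then its minimal polynomial is linear, so its only conjugate is $\alpha'=\alpha$ and $\Lambda'=\Lambda$, and the statement is trivial.

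There is no substantive obstacle here. The only points needing care are the reduction to rational weight, which is the weight-independence observation above, and making sure \autoref{thm:rational-wts-thm} is invoked in both directions, using that conjugacy is symmetric.
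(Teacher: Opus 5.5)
Your proposal is correct and matches the paper's intended argument: the corollary is stated without a separate proof, as an immediate consequence of \autoref{thm:rational-wts-thm} together with the observation from \autoref{sec:weights-of-designs} that uniform weights may be taken rational (in fact averaging $\ones$ already forces $a = 1/|W| \in \QQ$). Your use of the symmetry of conjugacy for the reverse direction, and your handling of the rational-eigenvalue case, fill in details the paper leaves implicit.
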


\begin{example} \label{ex:alg-icosahedron}
    The above results provide a much faster way to reach the same conclusion as in one of the parts of \autoref{ex:icosahedron}. The nontrivial eigenvalues of the icosahedral graph are $5 - \sqrt{5}, 6, 5 + \sqrt{5}$. The first and the third are conjugates; in fact, the field automorphism $\sigma: \phi \mapsto \psi$ of $\QQ(\phi)$ is a bijection between the first and third eigenspaces.
    By \autoref{cor:conjugate-designs-unif-wted} any uniformly weighted design will either average both $\Lambda_1$ and $\Lambda_3$ or neither. Therefore any uniformly weighted design of strength 2 must also average $\Lambda_3$, and thus be the whole vertex set. This shows that the eigenconfiguration $\cB_2$ cannot have any 2-witness directions. 
\end{example}

The above results suggest that factorizing the characteristic polynomial of the graph Laplacian is a key step in understanding the uniformly weighted designs of the graph. Now the Laplacian characteristic polynomial is never irreducible; there is always a factor of $x$, corresponding to the eigenvalue 0 with eigenvector $\ones$. Call a graph \textbf{Laplacian irreducible} if its Laplacian characteristic polynomial factors over the rationals into a single factor of $x$ and a degree $n-1$ irreducible factor (where $n$ is the number of vertices of the graph). 

\begin{proposition}
    A Laplacian irreducible graph has no nontrivial uniformly weighted designs at any strength.
\end{proposition}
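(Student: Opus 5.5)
Write the Laplacian characteristic polynomial as $x\,f(x)$ with $f$ irreducible over $\QQ$ of degree $n-1$, and reduce to \autoref{cor:conjugate-designs-unif-wted}. The plan is to show that all nontrivial eigenvalues are pairwise conjugate. Then any uniformly weighted design averaging $\Lambda_1$ must average every nontrivial eigenspace, and so must be the whole vertex set.

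First I handle the degenerate case $n=2$. Here $f$ is linear, so $G=K_2$ has a single nontrivial eigenspace. The only strengths of interest are $1\le t\le k-1=0$, so there is nothing to prove. Assume from now on $n\ge 3$.

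Next I show every nontrivial eigenvalue is irrational. An irreducible polynomial of degree at least $2$ has no rational roots. The roots of $f$ are exactly the nonzero eigenvalues $\lambda_1,\dots,\lambda_k$, because $G$ is connected and $0$ is a simple eigenvalue; in particular $0$ is not a root of $f$. Moreover, each $\lambda_i$ has minimal polynomial $f$, so any two of them are conjugates. Since an irreducible polynomial over $\QQ$ is separable, $f$ has $n-1$ distinct roots. Hence $k=n-1$ and every nontrivial eigenspace is one-dimensional, though this is not needed.

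Now suppose $W$ is a uniformly weighted design of some strength $t$ with $1\le t\le k-1$. Then $W$ averages $\Lambda_1$. Uniform weights are rational after rescaling, as observed in \autoref{sec:weights-of-designs}: we may take $a=1$, or simply $a=1/|W|$, which is rational. So \autoref{thm:rational-wts-thm} (equivalently \autoref{cor:conjugate-designs-unif-wted}) applies. Since each $\lambda_j$ is a conjugate of $\lambda_1$, $W$ averages every $\Lambda_j$, $j=1,\dots,k$, and hence is a design of full strength $k$. As recalled in the introduction, no proper subset of vertices averages all eigenspaces, so $W=[n]$. Equivalently, with the one-dimensional eigenspaces, $e_W$ is orthogonal to every nontrivial eigenvector, which forces $e_W\in\operatorname{span}(\ones)$.

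So the only uniformly weighted design is the trivial one. Equivalently, via \autoref{thm:unif-wted-two-valued}, no $\cB_t$ is $2$-valued. The main (mild) obstacle is confirming that the weight of a uniformly weighted design is rational, so that \autoref{thm:rational-wts-thm} applies. This follows because the design equation forces $a\cdot|W|=1$, by testing on $\ones$.
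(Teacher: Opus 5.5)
Your proof is correct and follows the paper's argument. All nonzero eigenvalues are roots of the single irreducible factor and hence mutually conjugate, so by \autoref{cor:conjugate-designs-unif-wted} a uniformly weighted design averaging $\Lambda_1$ averages every eigenspace and must be all of $[n]$. Your extra checks fill in hypotheses the paper leaves implicit: the eigenvalues are irrational for $n\ge 3$, the uniform weight $a=1/|W|$ is rational, and the case $n=2$ is vacuous.
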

\begin{proof}
    All nonzero eigenvalues of a Laplacian irreducible graph are conjugates of each other. Therefore, by \autoref{cor:conjugate-designs-unif-wted} any uniformly weighted design must average \emph{all} eigenspaces of the graph, and thus it cannot be a proper subset of the vertices \cite[Lemma 2.5]{babecki2022galeduality}.
\end{proof}

It is an interesting open question whether most graphs are generically Laplacian irreducible. The positive answer is strongly supported by experimental evidence, but not much has been proved. The answer to a simpler statement regarding the irreducibility of characteristic polynomials of random $\{\pm 1\}$ matrices \cite{ferberjainsahsawhney2023irreducibility} is conditional on the extended Riemann hypothesis. If a generic random graph was Laplacian irreducible, we could conclude that almost all graphs do not have uniformly weighted designs.

We do not tackle the question for random graphs, but instead provide a method for constructing infinite families of Laplacian irreducible graphs using Eisenstein's criterion.
Our methods are inspired by previous results \cite{yu2021irredcharpoly}, which dealt with the adjacency matrix rather than the Laplacian. 

We first establish some structural lemmas. For any graph $G$ with vertex $u \in V(G)$ and Laplacian $L(G)$, let $p_G$ denote the characteristic polynomial of $L(G)$, and $p_{G, u}$ the characteristic polynomial of the principal minor of $L(G)$ obtained by deleting the row and column corresponding to vertex $u$. Also, we say a polynomial $p$ is \textbf{Laplacian Eisenstein} if it is monic, has no constant term (or is divisible by $x$), has all nonleading coefficients even, and has a coefficient of $x$ that is not divisible by 4.  

\begin{lemma}\cite[Lemma 8]{guo2005eigenvalue} \label{lem:bridge-laplacian-calculation}
    Let $G, H$ be graphs with vertices $u \in G, v \in H$. Suppose $G_{uv}H$ is the bridge graph obtained by adding an edge between vertex $u$ in $G$ and vertex $v$ in $H$. Then the Laplacian characteristic polynomial of the bridge graph can be computed by the formula
    \[
        p_{G_{uv}H} = p_Gp_H - p_{G, u}p_H - p_Gp_{H, v}.
    \]
\end{lemma}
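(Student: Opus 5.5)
We compute the determinant $p_{G_{uv}H}(x)=\det(xI-L(G_{uv}H))$ directly, using the block structure of the bridge Laplacian. Order the vertices so that those of $G$ come first and those of $H$ second. Adding the edge $uv$ raises the $(u,u)$ and $(v,v)$ diagonal entries by one and puts $-1$ in positions $(u,v)$ and $(v,u)$. So
\[
xI - L(G_{uv}H) = \begin{pmatrix} xI - L(G) - e_ue_u^\top & e_u e_v^\top \\ e_v e_u^\top & xI - L(H) - e_ve_v^\top \end{pmatrix}.
\]
Write $M_G = xI - L(G)$ and $M_H = xI-L(H)$, so that $\det M_G = p_G$, $\det M_H = p_H$, and the principal minors obtained by deleting row and column $u$ (resp.\ $v$) have determinants $p_{G,u}$ (resp.\ $p_{H,v}$).

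The key step is to expand the determinant by multilinearity in row $u$ and row $v$. Row $u$ equals (row $u$ of $M_G$, $0$) plus the vector $-e_u^\top$ in the $G$-block plus $e_v^\top$ in the $H$-block. Row $v$ decomposes the same way, with the roles of $G$ and $H$ swapped. This gives nine terms. I would then evaluate each class of terms.

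\begin{enumerate}[(i)]
\item Taking the original rows in both places gives the block-diagonal determinant $p_Gp_H$.
\item Taking $-e_u^\top$ in row $u$ and the original row $v$ gives $-p_{G,u}p_H$, by cofactor expansion along row $u$. Symmetrically, taking $-e_v^\top$ in row $v$ gives $-p_Gp_{H,v}$.
\item Taking $-e_u^\top$ in row $u$ and $-e_v^\top$ in row $v$ gives $+p_{G,u}p_{H,v}$.
\item Terms involving exactly one off-diagonal unit vector vanish. After expanding along that row, the column it selects lies in the other block, and the remaining matrix is block-triangular with a singular-dimension mismatch. Equivalently, the cofactor factors as a minor of a block-diagonal matrix with unbalanced row and column deletions, which is zero. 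Mixed terms such as $-e_u^\top$ together with $e_u^\top$ in row $v$ give two proportional rows and also vanish.
\item The term taking $e_v^\top$ in row $u$ and $e_u^\top$ in row $v$ is a transposition-type term. Expanding along both rows gives $-p_{G,u}p_{H,v}$.
\end{enumerate}

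Summing, the two $p_{G,u}p_{H,v}$ contributions cancel, which leaves $p_Gp_H - p_{G,u}p_H - p_Gp_{H,v}$.

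The main obstacle is the sign bookkeeping in class (v) and the vanishing argument in class (iv). For (v), I would simplify by relabelling the vertices so that $u$ is the last vertex of $G$ and $v$ is the first vertex of $H$. The two special rows are then adjacent, and the sign of the double cofactor expansion is transparent: deleting rows $u,v$ and columns $v,u$ contributes a factor $(-1)$ from the swap.

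An alternative that avoids sign chasing is to apply the matrix determinant lemma with the rank-one update. Write $L(G_{uv}H) = L(G)\oplus L(H) + (e_u - e_v)(e_u-e_v)^\top$. Then
\[
p_{G_{uv}H} = p_Gp_H\,\bigl(1 - (e_u-e_v)^\top (M_G\oplus M_H)^{-1}(e_u-e_v)\bigr).
\]
The cross terms vanish by block-diagonality. The diagonal entries of the inverses are $p_{G,u}/p_G$ and $p_{H,v}/p_H$ by Cramer's rule, and this yields the formula immediately as an identity of rational functions, hence of polynomials. I would present this second route as the actual proof.
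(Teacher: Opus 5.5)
The paper gives no proof of this lemma. It is quoted from \cite[Lemma 8]{guo2005eigenvalue} and used as a black box in \autoref{thm:no-unif-design-inf-families}, so your argument supplies something the paper omits, and it is correct.

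Your rank-one route is complete as written. You have $xI - L(G_{uv}H) = (M_G\oplus M_H) - ww^\top$ with $w = e_u - e_v$. The matrix $M_G\oplus M_H$ is invertible over $\QQ(x)$, since its determinant $p_Gp_H$ is monic. The $(u,v)$ and $(v,u)$ entries of its inverse vanish by block-diagonality, and the adjugate formula gives the diagonal entries $p_{G,u}/p_G$ and $p_{H,v}/p_H$. Clearing denominators yields exactly $p_Gp_H - p_{G,u}p_H - p_Gp_{H,v}$.

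Your nine-term expansion also checks out, including the sign $-p_{G,u}p_{H,v}$ of the transposition term. For the four vanishing terms, a cleaner justification than the ``dimension mismatch'' wording is this. Two of them have proportional rows. In the other two, the $n$ rows of the $G$-block together with the row $e_u^\top$ are $n+1$ vectors supported on the $n$ columns of $G$; symmetrically, the $m$ rows of the $H$-block together with $e_v^\top$ are $m+1$ vectors on the $m$ columns of $H$. In both cases the rows are dependent.

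As for what each route buys: the expansion stays entirely inside polynomial arithmetic and shows explicitly how the two $p_{G,u}p_{H,v}$ contributions cancel. The rank-one argument is shorter and sign-free, and it generalizes at once; for example, a bridge of weight $c$ gives $p_Gp_H - c\,(p_{G,u}p_H + p_Gp_{H,v})$.
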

\begin{lemma} \label{lem:laplacian-eisenstein} 
    Suppose $G$ is a graph with $p_G$ Laplacian Eisenstein. Then $G$ is Laplacian irreducible.
\end{lemma}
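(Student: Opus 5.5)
Write $p_G(x) = x\,q(x)$, where $q$ is monic of degree $n-1$ with integer coefficients; this factorization exists because $p_G$ has no constant term. The plan is to apply Eisenstein's criterion at the prime $2$ to $q$. Since $p_G$ is monic with integer coefficients, $q$ is as well. Write $p_G(x) = x^n + c_{n-1}x^{n-1} + \cdots + c_1 x$. Then $q(x) = x^{n-1} + c_{n-1}x^{n-2} + \cdots + c_2 x + c_1$, so the coefficients of $q$ are exactly the nonleading coefficients of $p_G$, shifted down by one degree. In particular the constant term of $q$ is $c_1$, the coefficient of $x$ in $p_G$.

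Next we check the three Eisenstein conditions for $q$ at the prime $2$. The leading coefficient is $1$, which is not divisible by $2$. Every nonleading coefficient $c_1, \ldots, c_{n-1}$ is even by the Laplacian Eisenstein hypothesis. The constant term $c_1$ is not divisible by $4$, again by hypothesis. Eisenstein's criterion then says that $q$ is irreducible over $\QQ$; Gauss's lemma passes between $\ZZ$ and $\QQ$ here, and since $q$ is monic, irreducibility over $\ZZ$ is equivalent to irreducibility over $\QQ$.

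It follows that $p_G = x \cdot q$ factors over $\QQ$ into a single factor of $x$ and an irreducible factor of degree $n-1$. This is precisely the definition of Laplacian irreducible. One small point should be noted: $c_1 \ne 0$, because $c_1$ is not divisible by $4$, so $x \nmid q$ and the factor $x$ really does appear exactly once. (By the matrix-tree theorem, $c_1$ is $\pm n$ times the number of spanning trees, so $c_1 \neq 0$ also means $G$ is connected, although we do not need this.)

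There is no real obstacle in this argument. The only step requiring care is the re-indexing: the Eisenstein conditions must be applied to $q$, not to $p_G$ itself, because $p_G$ has zero constant term. Once we pass to $q$, the "coefficient of $x$ not divisible by $4$" condition on $p_G$ becomes the "constant term not divisible by $p^2$" condition of Eisenstein's criterion for $q$ with $p = 2$.
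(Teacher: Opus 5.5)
Your proposal is correct and follows essentially the same route as the paper: divide $p_G$ by $x$ and apply Eisenstein's criterion at the prime $2$ to $p_G/x$, whose constant term is the coefficient of $x$ in $p_G$. Your extra remark that $c_1 \neq 0$, so the factor $x$ appears exactly once, is a small point the paper leaves implicit.
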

\begin{proof}
    Since $p_G$ has no constant term, $p_G/x$ is also a polynomial. In $p_G/x$, the prime 2 divides all nonleading coefficients, does not divide the leading coefficient ($p_G/x$ is monic), and $2^2=4$ does not divide the constant term. By Eisenstein's criterion, $p_G/x$ is irreducible. Equivalently, $G$ is Laplacian irreducible.
\end{proof}
\begin{theorem} \label{thm:no-unif-design-inf-families}
    Suppose we have an ``\emph{anchor graph}'' $G$ of order $n$ and ``\emph{repeat graph}'' $H$ of order $m$, with vertices $u \in G$ and $v \in H$, satisfying the following conditions:
    \begin{enumerate}
        \item $p_H$ is Laplacian Eisenstein.
        \item The polynomial $p_{H, v}$ has odd constant term.
        \item $p_G$ is Laplacian Eisenstein.
        \item $n > m$ and $p_{G, u} \equiv p_{H, v}x^{n-m} \pmod 2$.
    \end{enumerate}
    Then we can construct an infinite family of graphs $G_0 = G, G_1, G_2, \dots$, with $G_{i+1}$ obtained by adding a bridge between $G_i$ and a copy of $H$, such that all the graphs $G_i$ are Laplacian irreducible.
\end{theorem}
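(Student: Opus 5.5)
Induction on $i$ gives the proof, using \autoref{lem:laplacian-eisenstein} at each step: it suffices to show every $p_{G_i}$ is Laplacian Eisenstein. The bridge in $G_{i+1}$ is always attached at the same vertex $u$ of $G_0$, lying in $G_i$; alternatively, we may use any vertex $u_i$ of $G_i$ whose deleted-row polynomial satisfies the right congruence. I would carry along the auxiliary invariant
\[
p_{G_i,u} \equiv p_{H,v}\,x^{n_i-m} \pmod 2, \qquad n_i := |V(G_i)| = n + im,
\]
where $u$ is the vertex at which the next bridge is attached. The base case $i=0$ is hypotheses (3) and (4).

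For the inductive step, set $G_{i+1} = (G_i)_{uv}H$. \autoref{lem:bridge-laplacian-calculation} gives
\[
p_{G_{i+1}} = p_{G_i}p_H - p_{G_i,u}p_H - p_{G_i}p_{H,v}.
\]
I check the four Laplacian Eisenstein conditions.
\begin{enumerate}[(i)]
\item Monic: each of $p_{G_i}$ and $p_H$ is monic of degree $n_i$ and $m$, respectively, while $p_{G_i,u}$ has degree $n_i-1$ and $p_{H,v}$ has degree $m-1$. Only the first product attains degree $n_i+m$, so the result is monic.
\item Divisible by $x$: $p_{G_{i+1}}$ is the Laplacian characteristic polynomial of a graph, so $x \mid p_{G_{i+1}}$ automatically; alternatively, each of the three terms contains $p_H$ or $p_{G_i}$, both divisible by $x$.
\item Nonleading coefficients even: reduce mod 2. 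Since $p_{G_i}$ and $p_H$ are Laplacian Eisenstein, $p_{G_i}\equiv x^{n_i}$ and $p_H\equiv x^m$. Using the invariant,
\[
p_{G_{i+1}} \equiv x^{n_i+m} - p_{H,v}x^{n_i-m}x^m - x^{n_i}p_{H,v} \equiv x^{n_i+m} - 2x^{n_i}p_{H,v} \equiv x^{n_i+m} \pmod 2.
\]
\item Coefficient of $x$ not divisible by 4: this is the crux, and I compute it directly. Write $p_{G_i} = x\,g$ and $p_H = x\,h$ with $g(0)\equiv 2 \pmod 4$ and $h(0)\equiv 2 \pmod 4$. The product $p_{G_i}p_H = x^2gh$ contributes nothing to the $x$-coefficient. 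The term $p_{G_i,u}p_H$ contributes $p_{G_i,u}(0)\,h(0)$, and the term $p_{G_i}p_{H,v}$ contributes $g(0)\,p_{H,v}(0)$.

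By the invariant and $n_i>m$, we have $p_{G_i,u}(0)\equiv 0 \pmod 2$, so $p_{G_i,u}(0)h(0)\equiv 0 \pmod 4$. Meanwhile $g(0)p_{H,v}(0)\equiv 2\cdot\text{odd}\equiv 2 \pmod 4$ by hypothesis (2). Hence the $x$-coefficient is $\equiv 2 \pmod 4$, which is what we need. (Alternatively, $p_{G_i,u}(0)$ is the number of spanning trees of $G_i$ by the matrix-tree theorem, but the parity from the invariant is enough.)
\end{enumerate}

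It remains to propagate the invariant to $G_{i+1}$. This is the step I expect to be the main obstacle, because it depends on where the next bridge attaches. Suppose the next bridge is attached at the same vertex $u$ of the original anchor. Deleting $u$ from $L(G_{i+1})$ gives the Laplacian of $G_i$ with the row and column of $u$ removed, but with the diagonal entry at $u$ raised by one.

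To handle this, expand the determinant along the bridge, exactly as in the proof of \autoref{lem:bridge-laplacian-calculation}. Schematically this gives
\[
p_{G_{i+1},u} = p_{G_i,u}\,p_H - p_{G_i,u}\,p_{H,v}
\]
(the second term arises from the extra $+1$ at $v$), possibly with one further term, which I would compute. Reducing mod 2 gives
\[
p_{G_{i+1},u} \equiv p_{H,v}x^{n_i-m}\bigl(x^m - p_{H,v}\bigr) \pmod 2,
\]
and this need not equal $p_{H,v}x^{n_{i+1}-m}$. So attaching at $u$ may fail to preserve the invariant.

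I would therefore instead attach the next copy at the vertex $v$ of the copy of $H$ just added, so the construction becomes a chain. In that case the relevant deleted-row polynomial is
\[
p_{G_{i+1},v} = p_{G_i}p_{H,v} - p_{G_i,u}\,p_{H,v}\cdot(\text{correction}).
\]
Mod 2 this should reduce to $x^{n_i}p_{H,v} = p_{H,v}x^{n_{i+1}-m}$, using $p_{G_i}\equiv x^{n_i}$ and the vanishing of the cross term mod 2. Verifying this determinant expansion carefully is the step I would do first, and it dictates the precise choice of attachment vertex in the construction.
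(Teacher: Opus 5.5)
Your one-step verification is sound. If $p_{G_i}$ is Laplacian Eisenstein and the attachment vertex satisfies $p_{G_i,u_i}\equiv x^{n_i-m}p_{H,v}\pmod 2$, then (i)--(iv) show $p_{G_{i+1}}$ is Laplacian Eisenstein. Your handling of the $x$-coefficient, which uses only that $p_{G_i,u_i}(0)=\pm\tau(G_i)$ is even, is more careful than the paper's claim that $p_{G,u}[1]=0$.

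The genuine gap is the step you deferred, propagating the congruence, and it cannot be filled. For the chain construction the exact identity is
\[
p_{G_{i+1},u_{i+1}}=\bigl(p_{G_i}-p_{G_i,u_i}\bigr)\,p_{H,v}.
\]
Deleting $u_{i+1}$ leaves a block-diagonal matrix. Its $H$-block has characteristic polynomial $p_{H,v}$. Its $G_i$-block is $L(G_i)$ with $1$ added at the diagonal entry of $u_i$ (the bridge), and expanding along that row gives $p_{G_i}-p_{G_i,u_i}$. The cross term does not vanish mod $2$: it is $\equiv x^{n_i-m}p_{H,v}^2\neq 0$, since $p_{H,v}$ is monic. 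Concretely, $p_{G_1,u_1}\equiv x^np_{H,v}+x^{n-m}p_{H,v}^2$, and the bridge formula then gives
\[
p_{G_2}\equiv x^{n+2m}+x^{n}p_{H,v}^{2}\pmod 2,
\]
whose coefficient of $x^{n+2m-2}$ is odd. So $G_2$ is never Laplacian Eisenstein, for any $G,H$ satisfying (1)--(4). Your computation for attaching repeatedly at $u$ is also right, and it is exact: $p_{G_{i+1},u}=p_{G_i,u}(p_H-p_{H,v})$, with no further term.

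Here is an independent check. The coefficient of $x^{N-2}$ in a Laplacian characteristic polynomial is $\equiv\sum_w\binom{\deg w}{2}\pmod 2$, and adding a bridge $ab$ raises this sum by $\deg a+\deg b$. Going from $G_1$ to $G_2$ therefore adds $\sum_{w\in H}\binom{\deg_H w}{2}+(\deg_H v+1)+\deg_H v$. The first summand is even by (1), so the parity flips. For the monorail family, for example, $p_{G_2}=x^{11}-28x^{10}+337x^9-\cdots$.

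The paper's proof does not get past this point either. Its induction step uses $p_{G_i,u_i}=p_{G_{i-1}}p_{H,v}$ ``by the block structure'', which makes exactly the omission above: it drops the $+1$ at $u_{i-1}$ coming from the bridge. Hence neither your outline nor the paper's argument establishes the statement beyond $G_1$. A valid proof would need one of two things:
\begin{enumerate}
\item a modified construction backed by an extra hypothesis guaranteeing some vertex $z$ of $G_{i+1}$ with $p_{G_{i+1},z}\equiv x^{n_{i+1}-m}p_{H,v}\pmod 2$;
\item an irreducibility argument other than Eisenstein at $2$ applied at every step.
\end{enumerate}
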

\begin{proof}
    We obtain the infinite family by the following inductive construction. First relabel $G_0 := G$ and $u_0 := u$. For $i=0, 1, \dots$, the graph $G_{i+1}$ is obtained from $G_i$ by taking the disjoint union of $G_i$ and $H$, adding an edge between $u_i$ and $v$, and labeling the copy of $v$ in $G_{i+1}$ as $u_{i+1}$.

    We now prove by induction that $p_{G_i}$ is Laplacian Eisenstein for all $i$. The base case $i=0$ is condition (3). For the other base case $i=1$, we know $p_{G_1} = p_Gp_H - p_{G, u}p_H - p_Gp_{H, v}.$ By conditions (1) and (3), we know that $p_H \equiv x^m \pmod 2$ and $p_G \equiv x^n \pmod 2$ respectively. Combining with condition (4), we have
    \[p_{G_1} \equiv p_Gp_H - p_{G, u}p_H - p_Gp_{H, v} \equiv x^{n+m} - p_{H, v}x^{n} - p_{H, v}x^n \equiv x^{n+m} \pmod 2.\]
    
    Now, let $p[x^k]$ denote the coefficient of the monomial $x^k$ in the polynomial $p$. Since $p_G, p_H$ are Laplacian characteristic polynomials and thus divisible by $x$, we have $p_G[1] = p_H[1] = 0$. Using $n > m$ in condition (4) also tells us $p_{G, u}[1] = 0$. Then we can calculate    
    \begin{equation} \label{eq:Laplacian-charpoly-xterm}
    \begin{aligned}
        p_{G_1}[x] &= (p_Gp_H - p_{G, u}p_H - p_Gp_{H, v})[x]\\
        &= p_G[x]p_H[1] + p_G[1]p_H[x] - p_{G, u}[x]p_H[1] - p_{G, u}[1]p_H[x] - p_G[x]p_{H, v}[1] - p_G[1]p_{H, v}[x]\\
        &= -p_G[x]p_{H, v}[1].
    \end{aligned}
    \end{equation}
    We have $p_G[x] \equiv 2 \pmod 4$ by condition (3) and $p_{H, v}[1]$ is odd by condition (2). Therefore their product is not divisible by 4, and neither is $p_{G_1}[x]$. Also, $p_{G_1}$ is monic and divisible by $x$ as it is a Laplacian characteristic polynomial. So we can conclude that $p_{G_1}$ is Laplacian Eisenstein. 

    We are now ready to perform the induction step. 
    Suppose $p_{G_i}$ and $p_{G_{i-1}}$ are Laplacian Eisenstein. Since $u_i$ in $G_i$ is part of a bridge joining a copy of $H$ to a copy of $G_{i-1}$, deleting $u_i$ would disconnect $H$ from all other vertices in $G_{i-1}$. By the block structure of the Laplacian, we have $p_{G_i, u_i} = p_{G_{i-1}}p_{H, v}$. Therefore,
    \[p_{G_{i+1}} = p_{G_i}p_H - p_{G_i, u_i}p_H - p_{G_i}p_{H, v} = p_{G_i}p_H - p_{G_{i-1}}p_{H, v}p_H - p_{G_i}p_{H, v}.\]
    We know $p_{G_i} \equiv x^{n + im} \pmod 2$ and $p_{G_{i-1}} \equiv x^{n + (i-1)m} \pmod 2$, as they are Laplacian Eisenstein. Using this, we reduce the equation for $p_{G_{i+1}}$ to
    \[p_{G_{i+1}} \equiv x^{n+im+m} - x^{n+(i-1)m}x^{m}p_{H, v} - x^{n+im}p_{H, v} \equiv x^{n+(i+1)m} \pmod 2.\]

    A similar calculation to \autoref{eq:Laplacian-charpoly-xterm} shows that $p_{G_{i+1}}[x] = -p_{G_i}[x]p_{H, v}[1]$, as the other terms involve polynomials divisible by $x$. By the inductive hypothesis and condition (2), we have $p_{G_{i+1}}[x] \equiv -2 \ne 0 \pmod 4$. Moreover, $p_{G_{i+1}}$ is monic and has no constant term as it is a Laplacian characteristic polynomial. Therefore, $p_{G_{i+1}}$ is Laplacian Eisenstein. By induction, we conclude that all the $p_{G_i}$ are Laplacian Eisenstein, and by \autoref{lem:laplacian-eisenstein} this implies that all the $G_i$ in our infinite family are Laplacian irreducible.
\end{proof}

\begin{figure}[!h]
    \centering
    \begin{minipage}{0.32\textwidth}
        \centering
        \begin{tikzpicture}[scale=0.5]
            \tikzstyle{every node}=[circle, fill=white, draw=black, inner sep=2pt]
    
            \node (g0) at (0, 0) [label=below:$u_1$]{};
            \node (g1) at (1, 5) [label=below right:$u_2$]{};
            \node (g2) at (-2, 5) {};
            \node (g3) at (-1, 3) {};
            \node (g4) at (-4, 3) {};
            \node (g5) at (-3, 0) {};
            \node (g6) at (-5, 1) {};
    
            \draw (g0) -- (g1) -- (g2);
            \draw (g0) -- (g3) -- (g4);
            \draw (g0) -- (g5) -- (g6);
            \draw (g1) -- (g3);
            \draw (g2) -- (g4) -- (g6);
            \draw (g4) -- (g5);
        \end{tikzpicture}
    \end{minipage}
    \begin{minipage}{0.32\textwidth}
        \centering
        \begin{tikzpicture}[scale=0.5]
            \tikzstyle{every node}=[circle, fill=white, draw=black, inner sep=2pt]
            \node (h1) at (2, 0) [label=below:$v$]{};
            \node (v1) at (2.8, 4) {};
            \draw (h1) -- (v1);
        \end{tikzpicture}
    \end{minipage}
    \begin{minipage}{0.32\textwidth}
        \centering
        \begin{tikzpicture}[scale=0.5]
            \tikzstyle{every node}=[circle, fill=white, draw=black, inner sep=2pt]
            \node (h10) at (2.5, 5) [label=above:$v$]{};
            \node (h11) at (2.5, 3.5) {};
            \node (h12) at (4, 3.5) {};
            \node (h13) at (4, 2) {};
            \node (h14) at (2.5, 2) {};
            \node (h15) at (1.5, 2.75) {};

            \draw (h10) -- (h11) -- (h12) -- (h13) -- (h14);
            \draw (h11) -- (h14) -- (h15) -- (h11);
        \end{tikzpicture}
    \end{minipage}
    \caption{Left: the anchor graph $G$ with two choices of $u$. Middle: repeat graph $H_1$ for the monorail family. Right: repeat graph $H_2$ for the houseboat family.}
    \label{fig:no-unif-design-families-pieces}
\end{figure}

\begin{example} \label{ex:monorail-graphs}
    We define the infinite family of \emph{monorail graphs}, where our repeat graph $H_1$ is a single edge. In this case $p_{H_1} = x^2 - 2x$ and $p_{H_1, v} = x - 1$, which satisfies conditions (1) and (2) of \autoref{thm:no-unif-design-inf-families}. 
    There are many choices of anchor graph $G$; we use the graph in \autoref{fig:no-unif-design-families-pieces}, left. For this graph $G$,
    \[p_G = x^7 - 20x^6 + 160x^5 - 652x^4 + 1418x^3 - 1548x^2 + 658x \equiv x^7 \pmod 2.\]
    Also, the coefficient of $x$ in $p_G$ is $658$, which is not divisible by 4. Therefore $p_G$ is Laplacian Eisenstein, satisfying condition (3). Excluding the vertex $u_1$, the characteristic polynomial of the minor is \[p_{G, u_1} = x^6 - 17x^5 + 112x^4 - 360x^3 + 578x^2 - 416x + 94 \equiv x^6 - x^5 = x^5(x-1) \pmod 2,\] which satisfies condition (4). Therefore, following \autoref{thm:no-unif-design-inf-families} we can create an infinite family of graphs $G_i$ by adding copies of $H_1$ to $G$ by bridges. The monorail graphs are all Laplacian irreducible, and thus have no uniformly weighted designs at any strength.
\end{example}

\begin{figure}[!h]
    \centering
    \begin{minipage}{0.38\textwidth}
        \centering
        \begin{tikzpicture}[scale=0.5]
            \tikzstyle{every node}=[circle, fill=white, draw=black, inner sep=2pt]
            \tikzstyle{labelnode}=[rectangle, draw=none, fill=none, inner sep=0pt]

            \node (g0) at (0, 0) {};
            \node[labelnode, below=0.4em of g0] {$u$};
            \node (g1) at (1, 5) {};
            \node (g2) at (-2, 5) {};
            \node (g3) at (-1, 3) {};
            \node (g4) at (-4, 3) {};
            \node (g5) at (-3, 0) {};
            \node (g6) at (-5, 1) {};
    
            \draw (g0) -- (g1) -- (g2);
            \draw (g0) -- (g3) -- (g4);
            \draw (g0) -- (g5) -- (g6);
            \draw (g1) -- (g3);
            \draw (g2) -- (g4) -- (g6);
            \draw (g4) -- (g5);

            \node[fill=black] (h1) at (2, 0) {};
            \node[labelnode, below=0.4em of h1] {$v$};
            \node (v1) at (2.8, 4) {};
            \draw (g0) -- (h1) -- (v1);
        \end{tikzpicture}
    \end{minipage}
    \begin{minipage}{0.6\textwidth}
        \centering
        \begin{tikzpicture}[scale=0.5]
            \tikzstyle{every node}=[circle, fill=white, draw=black, inner sep=2pt]
            \tikzstyle{labelnode}=[rectangle, draw=none, fill=none, inner sep=0pt]

            \node (g0) at (0, 0) {};
            \node[labelnode, below=0.4em of g0] {$u$};
            \node (g1) at (1, 5) {};
            \node (g2) at (-2, 5) {};
            \node (g3) at (-1, 3) {};
            \node (g4) at (-4, 3) {};
            \node (g5) at (-3, 0) {};
            \node (g6) at (-5, 1) {};
    
            \draw (g0) -- (g1) -- (g2);
            \draw (g0) -- (g3) -- (g4);
            \draw (g0) -- (g5) -- (g6);
            \draw (g1) -- (g3);
            \draw (g2) -- (g4) -- (g6);
            \draw (g4) -- (g5);
    
            \node (h1) at (2, 0) {};
            \node (v1) at (2.8, 4) {};
            \node (h2) at (4, 0) {};
            \node (v2) at (4.8, 4) {};
            \node[draw=white] (dots) at (6, 0) {$\dots$};
            \node (hi) at (8, 0) {};
            \node (vi) at (8.8, 4) {};

            \node[labelnode, below=0.4em of h1] {$u_1$};
            \node[labelnode, below=0.4em of h2] {$u_2$};
            \node[labelnode, below=0.4em of hi] {$u_{i+1}$};

            \draw (g0) -- (h1) -- (v1);
            \draw (h1) -- (h2) -- (v2);
            \draw (h2) -- (dots) -- (hi) -- (vi);
        \end{tikzpicture}
    \end{minipage}
    \caption{The monorail graphs; initial bridging marked in black.}
    \label{fig:monorail-graph}
\end{figure}

\medskip 

\begin{example} \label{ex:houseboat-graphs}
    We now define the family of \emph{houseboat graphs}. In this case our repeat graph is $H_2$ (\autoref{fig:no-unif-design-families-pieces}, right). The resulting graphs look like boats (or houseboats) lined up along a pier, hence the name. We have
    \begin{align*} 
    p_{H_2} & =  x^6 - 14x^5 + 72x^4 - 168x^3 + 176x^2 - 66x,\\
    p_{H_2, v} & =  x^5 - 13x^4 + 60x^3 - 117x^2 + 86x - 11 \equiv x^5 + x^4 + x^2 + 1 \pmod 2.
    \end{align*}
    and we can check that these satisfy conditions (1) and (2). We use the same anchor graph $G$ as the previous example (thus satisfying condition (3)), but choose a different vertex $u_2$ to bridge to. We calculate
    \[
        p_{G, u_2} = x^6 - 17x^5 + 112x^4 - 361x^3 + 586x^2 - 431x + 94 \equiv (x^5 + x^4 + x^2 + 1)x = p_{H_2, v}x \pmod 2 .
    \]
    Therefore $G$ and $H_2$ satisfy condition (4), and we can construct an infinite family by bridging copies of $H_2$ to the anchor. The resulting houseboat graphs are all Laplacian irreducible, and thus have no uniformly weighted designs at any strength.
\end{example}

\begin{figure}[!h]
    \centering
    \begin{minipage}{0.38\textwidth}
        \centering
        \begin{tikzpicture}[scale=0.5]
            \tikzstyle{every node}=[circle, fill=white, draw=black, inner sep=2pt]
            \tikzstyle{labelnode}=[rectangle, draw=none, fill=none, inner sep=0pt]

            \node (g0) at (0, 0) {};
            \node (g1) at (1, 5) {};
            \node (g2) at (-2, 5) {};
            \node (g3) at (-1, 3) {};
            \node (g4) at (-4, 3) {};
            \node (g5) at (-3, 0) {};
            \node (g6) at (-5, 1) {};
            \node[labelnode, above=0.85em of g1, anchor=north] {$u$};

            \draw (g0) -- (g1) -- (g2);
            \draw (g0) -- (g3) -- (g4);
            \draw (g0) -- (g5) -- (g6);
            \draw (g1) -- (g3);
            \draw (g2) -- (g4) -- (g6);
            \draw (g4) -- (g5);

            \node[fill=black] (h10) at (2.5, 5) {};
            \node (h11) at (2.5, 3.5) {};
            \node (h12) at (4, 3.5) {};
            \node (h13) at (4, 2) {};
            \node (h14) at (2.5, 2) {};
            \node (h15) at (1.5, 2.75) {};
            \node[labelnode, above=0.85em of h10, anchor=north] {$v$};

            \draw (g1) -- (h10) -- (h11) -- (h12) -- (h13) -- (h14);
            \draw (h11) -- (h14) -- (h15) -- (h11);
        \end{tikzpicture}
    \end{minipage}
    \begin{minipage}{0.6\textwidth}
        \centering
        \begin{tikzpicture}[scale=0.5]
            \tikzstyle{every node}=[circle, fill=white, draw=black, inner sep=2pt]
            \tikzstyle{labelnode}=[rectangle, draw=none, fill=none, inner sep=0pt]

            \node (g0) at (0, 0) {};
            \node (g1) at (1, 5) {};
            \node (g2) at (-2, 5) {};
            \node (g3) at (-1, 3) {};
            \node (g4) at (-4, 3) {};
            \node (g5) at (-3, 0) {};
            \node (g6) at (-5, 1) {};
            \node[labelnode, above=0.85em of g1, anchor=north] {$u$};

            \draw (g0) -- (g1) -- (g2);
            \draw (g0) -- (g3) -- (g4);
            \draw (g0) -- (g5) -- (g6);
            \draw (g1) -- (g3);
            \draw (g2) -- (g4) -- (g6);
            \draw (g4) -- (g5);

            \node (h10) at (2.5, 5) {};
            \node (h11) at (2.5, 3.5) {};
            \node (h12) at (4, 3.5) {};
            \node (h13) at (4, 2) {};
            \node (h14) at (2.5, 2) {};
            \node (h15) at (1.5, 2.75) {};
            \node[labelnode, above=0.85em of h10, anchor=north] {$u_1$};

            \draw (g1) -- (h10) -- (h11) -- (h12) -- (h13) -- (h14);
            \draw (h11) -- (h14) -- (h15) -- (h11);

            \node (h20) at (6.5, 5) {};
            \node (h21) at (6.5, 3.5) {};
            \node (h22) at (8, 3.5) {};
            \node (h23) at (8, 2) {};
            \node (h24) at (6.5, 2) {};
            \node (h25) at (5.5, 2.75) {};
            \node[labelnode, above=0.85em of h20, anchor=north] {$u_2$};

            \draw (h10) -- (h20) -- (h21) -- (h22) -- (h23) -- (h24);
            \draw (h21) -- (h24) -- (h25) -- (h21);

            \node[draw=white] (dots) at (8.5, 5) {$\dots$};

            \node (hi0) at (11.5, 5) {};
            \node (hi1) at (11.5, 3.5) {};
            \node (hi2) at (13, 3.5) {};
            \node (hi3) at (13, 2) {};
            \node (hi4) at (11.5, 2) {};
            \node (hi5) at (10.5, 2.75) {};
            \node[labelnode, above=0.85em of hi0, anchor=north] {$u_{i+1}$};

            \draw (h20) -- (dots) -- (hi0) -- (hi1) -- (hi2) -- (hi3) -- (hi4);
            \draw (hi1) -- (hi4) -- (hi5) -- (hi1);

        \end{tikzpicture}
    \end{minipage}
    \caption{The houseboat graphs; initial bridging marked in black.}
    \label{fig:houseboat-graphs}
\end{figure} 

\printbibliography

@article{gouveia-parrilo-thomas2010,
  author  = {Gouveia, Jo\~{a}o and Parrilo, Pablo A. and Thomas, Rekha R.},
  title   = {Theta Bodies for Polynomial Ideals},
  journal = {SIAM Journal on Optimization},
  volume  = {20},
  number  = {4},
  pages   = {2097--2118},
  year    = {2010},
  doi     = {10.1137/090747587}
}

@article {babecki-shiroma,
    AUTHOR = {Babecki, Catherine and Shiroma, David},
     TITLE = {Eigenpolytope universality and graphical designs},
   JOURNAL = {SIAM J. Discrete Math.},
  FJOURNAL = {SIAM Journal on Discrete Mathematics},
    VOLUME = {38},
      YEAR = {2024},
    NUMBER = {1},
     PAGES = {947--964},
      ISSN = {0895-4801,1095-7146},
   MRCLASS = {05C50 (52B12 68Q17 68R10 90C57)},
  MRNUMBER = {4712817},
MRREVIEWER = {Hong\ Zhang},
       DOI = {10.1137/22M1528768},
       URL = {https://doi-org.offcampus.lib.washington.edu/10.1137/22M1528768},
}

@article{chowdhury2025combstructures,
      title={Graphical Designs find Combinatorial Structures}, 
      author={Zawad Chowdhury and Stefan Steinerberger and Rekha R. Thomas},
      year={2025},
      eprint={2507.13327},
      archivePrefix={arXiv},
      journal={arXiv preprint},
      url={https://arxiv.org/abs/2507.13327}, 
}

@article {babecki2022galeduality,
    AUTHOR = {Babecki, Catherine and Thomas, Rekha R.},
     TITLE = {Graphical Designs and Gale Duality},
   JOURNAL = {Math. Program.},
  FJOURNAL = {Mathematical Programming},
    VOLUME = {200},
      YEAR = {2023},
    NUMBER = {2},
     PAGES = {703--737},
      ISSN = {0025-5610,1436-4646},
   MRCLASS = {05C50 (52B35 90C57)},
  MRNUMBER = {4604975},
       DOI = {10.1007/s10107-022-01861-0},
       URL = {https://doi-org.offcampus.lib.washington.edu/10.1007/s10107-022-01861-0},
}

@book {alon2008probabilistic,
    AUTHOR = {Alon, Noga and Spencer, Joel H.},
     TITLE = {The Probabilistic Method},
   EDITION = {Third},
      NOTE = {With an appendix on the life and work of Paul Erd\H os},
 PUBLISHER = {John Wiley \& Sons, Inc., Hoboken, NJ},
      YEAR = {2008},
     PAGES = {xviii+352},
      ISBN = {978-0-470-17020-5},
   MRCLASS = {60-02 (05C80 60C05 60F99 60G42)},
  MRNUMBER = {2437651},
       DOI = {10.1002/9780470277331},
       URL = {https://doi.org/10.1002/9780470277331},
}

@article{steinerberger2020designs,
  title={Generalized Designs on Graphs: Sampling, Spectra, Symmetries},
  author={Steinerberger, Stefan},
  journal={Journal of graph theory},
  volume={93},
  number={2},
  pages={253--267},
  year={2020},
  publisher={Wiley Online Library},
  url={https://arxiv.org/pdf/1803.02235}
}

@article {golubev2020extremal,
    AUTHOR = {Golubev, Konstantin},
     TITLE = {Graphical Designs and Extremal Combinatorics},
   JOURNAL = {Linear Algebra Appl.},
  FJOURNAL = {Linear Algebra and its Applications},
    VOLUME = {604},
      YEAR = {2020},
     PAGES = {490--506},
      ISSN = {0024-3795,1873-1856},
   MRCLASS = {05B99 (05C35 05C50 05C69 05C70 35J05 35P05 35R02)},
  MRNUMBER = {4123763},
       DOI = {10.1016/j.laa.2020.07.012},
       URL = {https://doi.org/10.1016/j.laa.2020.07.012},
}

@article {zhu2023bch,
    AUTHOR = {Zhu, Yan},
     TITLE = {Optimal and Extremal Graphical Designs on Regular Graphs
              associated with Classical Parameters},
   JOURNAL = {Des. Codes Cryptogr.},
  FJOURNAL = {Designs, Codes and Cryptography. An International Journal},
    VOLUME = {91},
      YEAR = {2023},
    NUMBER = {8},
     PAGES = {2737--2754},
      ISSN = {0925-1022,1573-7586},
   MRCLASS = {05B30 (05E30 94B15)},
  MRNUMBER = {4618186},
       DOI = {10.1007/s10623-023-01231-7},
       URL = {https://doi.org/10.1007/s10623-023-01231-7},
}

@article{babecki2021codes,
  title={Codes, Cubes, and Graphical Designs},
  author={Babecki, Catherine},
  journal={Journal of Fourier Analysis and Applications},
  volume={27},
  number={5},
  pages={81},
  year={2021},
  publisher={Springer},
  url={https://link.springer.com/article/10.1007/s00041-021-09852-z}
}

@article{keevash2024existencedesigns,
      title={The Existence of Designs}, 
      author={Peter Keevash},
      year={2014},
      journal={arXiv preprint},
      eprint={1401.3665},
      archivePrefix={arXiv},
      url={https://arxiv.org/abs/1401.3665}, 
}

@article {kuperberg2017combstructures,
    AUTHOR = {Kuperberg, Greg and Lovett, Shachar and Peled, Ron},
     TITLE = {Probabilistic Existence of Regular Combinatorial Structures},
   JOURNAL = {Geom. Funct. Anal.},
  FJOURNAL = {Geometric and Functional Analysis},
    VOLUME = {27},
      YEAR = {2017},
    NUMBER = {4},
     PAGES = {919--972},
      ISSN = {1016-443X,1420-8970},
   MRCLASS = {05D40 (05A15 05B15 05B30 05C65)},
  MRNUMBER = {3678505},
       DOI = {10.1007/s00039-017-0416-9},
       URL = {https://doi.org/10.1007/s00039-017-0416-9},
}

@article {sobolev1962cubature,
    AUTHOR = {Sobolev, S. L.},
     TITLE = {Cubature Formulas on the Sphere which are Invariant under
              Transformations of Finite Rotation Groups},
   JOURNAL = {Dokl. Akad. Nauk SSSR},
  FJOURNAL = {Doklady Akademii Nauk SSSR},
    VOLUME = {146},
      YEAR = {1962},
     PAGES = {310--313},
      ISSN = {0002-3264},
   MRCLASS = {65.55},
  MRNUMBER = {141225},
MRREVIEWER = {A.\ H.\ Stroud},
}

@article {goethals1977sphericaldesigns,
    AUTHOR = {Delsarte, P. and Goethals, J. M. and Seidel, J. J.},
     TITLE = {Spherical Codes and Designs},
   JOURNAL = {Geometriae Dedicata},
  FJOURNAL = {Geometriae Dedicata},
    VOLUME = {6},
      YEAR = {1977},
    NUMBER = {3},
     PAGES = {363--388},
   MRCLASS = {05B99},
  MRNUMBER = {485471},
MRREVIEWER = {Michel\ Deza},
       DOI = {10.1007/bf03187604},
       URL = {https://doi.org/10.1007/bf03187604},
}

@article {bondarenko2013spherical,
    AUTHOR = {Bondarenko, Andriy and Radchenko, Danylo and Viazovska,
              Maryna},
     TITLE = {Optimal Asymptotic Bounds for Spherical Designs},
   JOURNAL = {Ann. of Math. (2)},
  FJOURNAL = {Annals of Mathematics. Second Series},
    VOLUME = {178},
      YEAR = {2013},
    NUMBER = {2},
     PAGES = {443--452},
      ISSN = {0003-486X,1939-8980},
   MRCLASS = {41A63 (41A55 52C35 65C10)},
  MRNUMBER = {3071504},
MRREVIEWER = {Hiroshi\ Nozaki},
       DOI = {10.4007/annals.2013.178.2.2},
       URL = {https://doi-org.offcampus.lib.washington.edu/10.4007/annals.2013.178.2.2},
}

@article {yu2021irredcharpoly,
    AUTHOR = {Yu, Qian and Liu, Fenjin and Zhang, Hao and Heng, Ziling},
     TITLE = {Note on graphs with irreducible characteristic polynomials},
   JOURNAL = {Linear Algebra Appl.},
  FJOURNAL = {Linear Algebra and its Applications},
    VOLUME = {629},
      YEAR = {2021},
     PAGES = {72--86},
      ISSN = {0024-3795,1873-1856},
   MRCLASS = {05C50 (05C31 11R09 12F05)},
  MRNUMBER = {4293741},
       DOI = {10.1016/j.laa.2021.07.013},
       URL = {https://doi-org.offcampus.lib.washington.edu/10.1016/j.laa.2021.07.013},
}

@incollection {chvatal1977threshold,
    AUTHOR = {Chv\'atal, V\'aclav and Hammer, Peter L.},
     TITLE = {Aggregation of inequalities in integer programming},
 BOOKTITLE = {Studies in integer programming ({P}roc. {W}orkshop, {B}onn,
              1975)},
    SERIES = {Ann. Discrete Math.},
    VOLUME = {Vol. 1},
     PAGES = {145--162},
 PUBLISHER = {North-Holland, Amsterdam-New York-Oxford},
      YEAR = {1977},
   MRCLASS = {90C10},
  MRNUMBER = {479384},
MRREVIEWER = {Csaba\ Fabian},
}

@book {mahadevpeled1995threshold,
    AUTHOR = {Mahadev, N. V. R. and Peled, U. N.},
     TITLE = {Threshold graphs and related topics},
    SERIES = {Annals of Discrete Mathematics},
    VOLUME = {56},
 PUBLISHER = {North-Holland Publishing Co., Amsterdam},
      YEAR = {1995},
     PAGES = {xiv+543},
      ISBN = {0-444-89287-7},
   MRCLASS = {05-02 (05C30 05C75)},
  MRNUMBER = {1417258},
MRREVIEWER = {Arkadzi\ Charniak},
}

@article {macharete2024thresholdeigenbasis,
    AUTHOR = {Macharete, Rafael R. and Del-Vecchio, Renata R. and Teixeira,
              Heber and de Lima, Leonardo},
     TITLE = {A {L}aplacian eigenbasis for threshold graphs},
   JOURNAL = {Spec. Matrices},
  FJOURNAL = {Special Matrices},
    VOLUME = {12},
      YEAR = {2024},
     PAGES = {Paper No. 20240029, 13},
      ISSN = {2300-7451},
   MRCLASS = {05C50},
  MRNUMBER = {4815530},
       DOI = {10.1515/spma-2024-0029},
       URL = {https://doi-org.offcampus.lib.washington.edu/10.1515/spma-2024-0029},
}

@misc{borg2026thresholdeigenvectors,
      title={A Note on the Laplacian Eigenvectors of Threshold Graphs}, 
      author={James L. Borg and Irene Sciriha and Zoia Sherman},
      year={2026},
      eprint={2605.03645},
      archivePrefix={arXiv},
      primaryClass={math.CO},
      url={https://arxiv.org/abs/2605.03645}, 
}

@article {guo2005eigenvalue,
    AUTHOR = {Guo, Ji-Ming},
     TITLE = {On the second largest {L}aplacian eigenvalue of trees},
   JOURNAL = {Linear Algebra Appl.},
  FJOURNAL = {Linear Algebra and its Applications},
    VOLUME = {404},
      YEAR = {2005},
     PAGES = {251--261},
      ISSN = {0024-3795,1873-1856},
   MRCLASS = {05C50 (15A18)},
  MRNUMBER = {2149662},
MRREVIEWER = {Weigen\ Yan},
       DOI = {10.1016/j.laa.2005.02.031},
}

@article {steinerbergerthomas2025randomwalks,
    AUTHOR = {Steinerberger, Stefan and Thomas, Rekha R.},
     TITLE = {Random walks, equidistribution and graphical designs},
   JOURNAL = {Adv. in Appl. Math.},
  FJOURNAL = {Advances in Applied Mathematics},
    VOLUME = {165},
      YEAR = {2025},
     PAGES = {Paper No. 102837, 11},
      ISSN = {0196-8858,1090-2074},
   MRCLASS = {05C48 (05C81)},
  MRNUMBER = {4848395},
MRREVIEWER = {Hong\ Zhang},
       DOI = {10.1016/j.aam.2024.102837},
       URL = {https://doi-org.offcampus.lib.washington.edu/10.1016/j.aam.2024.102837},
}

@article {seymourzaslavsky1984spericaldesign,
    AUTHOR = {Seymour, P. D. and Zaslavsky, Thomas},
     TITLE = {Averaging sets: a generalization of mean values and spherical
              designs},
   JOURNAL = {Adv. in Math.},
  FJOURNAL = {Advances in Mathematics},
    VOLUME = {52},
      YEAR = {1984},
    NUMBER = {3},
     PAGES = {213--240},
      ISSN = {0001-8708},
   MRCLASS = {05B30 (26B15)},
  MRNUMBER = {744857},
MRREVIEWER = {J.\ J.\ Seidel},
       DOI = {10.1016/0001-8708(84)90022-7},
       URL = {https://doi.org/10.1016/0001-8708(84)90022-7},
}

@book{hedayat1999orthogonal,
  title={Orthogonal arrays: theory and applications},
  author={Hedayat, A. S. and Sloane, N. J. A. and Stufken, John},
  volume={68},
  year={1999},
  publisher={Springer New York}
}

@article {ferberjainsahsawhney2023irreducibility,
    AUTHOR = {Ferber, Asaf and Jain, Vishesh and Sah, Ashwin and Sawhney,
              Mehtaab},
     TITLE = {Random symmetric matrices: rank distribution and
              irreducibility of the characteristic polynomial},
   JOURNAL = {Math. Proc. Cambridge Philos. Soc.},
  FJOURNAL = {Mathematical Proceedings of the Cambridge Philosophical
              Society},
    VOLUME = {174},
      YEAR = {2023},
    NUMBER = {2},
     PAGES = {233--246},
      ISSN = {0305-0041,1469-8064},
   MRCLASS = {15B52 (11M50 60B20)},
  MRNUMBER = {4545205},
MRREVIEWER = {Yifeng\ Huang},
       DOI = {10.1017/S0305004122000226},
       URL = {https://doi-org.offcampus.lib.washington.edu/10.1017/S0305004122000226},
}

@book{garey1979computers,
  title={Computers and Intractability: A Guide to the Theory of NP-Completeness},
  author={Garey, Michael R. and Johnson, David S.},
  year={1979},
  publisher={W. H. Freeman and Company}
}

\end{document}